\documentclass[11pt]{article}

\usepackage{amsmath,amssymb,amsthm} 
\usepackage{mathrsfs,dsfont}
\usepackage{bbm}
\usepackage{paralist}
\usepackage{hyperref}
\usepackage{graphicx}
\usepackage{subcaption}
\graphicspath{{figures/}}
\usepackage{color}
\usepackage{todonotes}
\usepackage{bm}
\usepackage{float}
\usepackage{natbib}
\usepackage{algorithmic}
\usepackage[ruled,vlined]{algorithm2e}
\usepackage{booktabs}
\setdefaultenum{(i)}{}{}{}
\usepackage[margin=1in]{geometry}

\numberwithin{equation}{section}

\renewenvironment{thebibliography}[1]{%
\begin{oldthebibliography}{#1}%
\setlength{\baselineskip}{.8em}
\linespread{.8}
\small
\setlength{\parskip}{0ex}%
\setlength{\itemsep}{.1em}%
}%
{%
\end{oldthebibliography}%
}

\theoremstyle{plain}
\newtheorem{theorem}{Theorem}[section]
\newtheorem{proposition}[theorem]{Proposition}

\newtheorem{corollary}[theorem]{Corollary}

\DeclareMathAlphabet\scr{U}{scr}{m}{n}
\SetMathAlphabet\scr{bold}{U}{scr}{b}{n}
  \DeclareFontFamily{U}{scr}{\skewchar\font'177}%
  \DeclareFontShape{U}{scr}{m}{n}{<-6>rsfs5<6-8>rsfs7<8->rsfs10}{}%
  \DeclareFontShape{U}{scr}{b}{n}{<-6>rsfs5<6-8>rsfs7<8->rsfs10}{}%

\theoremstyle{definition}
\newtheorem{remark}[theorem]{Remark}
\newtheorem{assumption}[theorem]{Assumption}

\def\A{\mathcal{A}}

\def\F{\mathcal{F}}

\def\P{\mathbb{P}}
\def\E{\mathbb{E}}
\def\R{\mathbb{R}}

\def\diag{\text{diag}}
\def\argmin{\text{argmin}}

\newcommand{\XS}[1]{{\color{violet} #1}}

\title{Dynamic Portfolio Optimization under CVaR Constraints}
\author{
Anran Hu\footnote{Columbia University, Department of Industrial Engineering and Operations Research; \texttt{ah4277@columbia.edu}. }\qquad
Silvana M. Pesenti\footnote{University of Toronto, Department of Statistical Sciences; \texttt{silvana.pesenti@utoronto.ca}. } \qquad
Xiaofei Shi \footnote{University of Toronto, Department of Statistical Sciences; \texttt{xf.shi@utoronto.ca}.}
}

\date{}
\begin{document}

\maketitle
\begin{abstract}
We study continuous-time dynamic portfolio optimization under a Conditional Value-at-Risk (CVaR) constraint on the investor’s terminal loss. 
For a general class of convex trading objectives, we exploit the auxiliary-threshold representation of CVaR to establish the existence of an optimal strategy and strong duality without requiring market completeness. 
These results motivate a dual-based nested bisection--golden-search algorithm over the threshold and Lagrangian multiplier, where the inner iterations reduce to standard unconstrained stochastic control problems. 
We prove that the resulting strategies converge to the optimal control as the number of iterations tends to infinity. 
Numerical experiments recover the Merton policy when the risk constraint is nonbinding.
When the constraint is binding, the optimal strategy becomes state dependent: the investor reduces risky exposure following adverse outcomes but preserves, and near maturity may increase, exposure following favorable outcomes.
Thus, a terminal CVaR constraint produces an asymmetric reallocation across
states rather than uniform de-risking. Nontraded endowment risk amplifies the
conservative adjustment, whereas price impact lowers desired positions
and adjustment speeds.
\end{abstract}

\vspace{0.5em}
\noindent\textbf{Keywords:} Dynamic portfolio optimization; Conditional
Value-at-Risk; Constrained stochastic control; Lagrangian duality; Incomplete markets 

\vspace{0.5em}
\noindent\textbf{JEL Classification:}  G11; C61; C63
\section{Introduction}\label{sec:introduction}

Financial investors seek to generate returns while managing the risks associated with their investment strategies. Merton's foundational analyses of continuous-time portfolio selection and the efficient portfolio frontier established canonical benchmarks for this trade-off \cite{merton1969lifetime,merton1972analytic}. Subsequent works have extended portfolio choice in several directions, including market models with price impact  (e.g.~\cite{tsoukalas2019execution,edirisinghe2023leveraged}), trading costs (e.g.~\cite{dai2024dynamic,muhlekarbe2025dynamic}), robust and risk-aware decision criteria (e.g.~\cite{jaimungal2022SIAM}), and benchmark-relative portfolio optimization (e.g.~\cite{pesenti2023SIAM}).

Downside-risk bounds are particularly relevant in portfolio selection when regulatory mandates or trading-desk requirements impose direct limits on losses in adverse states. Unlike conventional risk aversion, such bounds target the downside tail directly. 
Conditional Value-at-Risk (CVaR), also known as Expected Shortfall, is a downside-tail risk measure that averages losses in a prescribed tail of the loss distribution \cite{acerbi2002coherence,rockafellar2002conditional}. In static portfolio optimization, CVaR may either be minimized as the
risk objective \cite{rockafellar2000optimization} or imposed as a constraint on an otherwise
return-oriented objective \cite{krokhmal2002portfolio,alexander2004comparison}. 
Related discrete-time multiperiod formulations include
CVaR-based risk control in \cite{chen2005multiperiod} and dynamic
mean--CVaR portfolio selection with a focus on time consistency in
\cite{strub2019discrete}.
In the continuous-time setting, one strand of the literature studies the cases
where the problem can be recast as the choice of a replicable terminal payoff, due to additional assumptions such as market completeness \cite{he2015dynamic,gao2017dynamic}.
Another strand considers self-financing portfolio choice under
dynamically re-evaluated risk constraints and exploits specialized
growth-optimal or CRRA structures
\cite{pirvu2009maximizing,morenobromberg2013crra}.

A common way is to incorporate downside risk through a soft
penalty in the investor's objective. Although
convenient, such a formulation requires the investor to specify an
exogenous penalty coefficient, and an arbitrary choice of this
coefficient does not ensure compliance with a prescribed risk budget.
In many institutional, regulatory, and wealth-management applications,
however, the relevant mandate is an explicit upper bound on downside
risk. Motivated by this consideration, we treat the terminal CVaR bound
as an exogenous risk-management requirement rather than a preference
penalty incorporated into the performance objective. Hard-constraint
formulations also arise more broadly in stochastic control, including
problems with expectation and terminal-law constraints
(e.g.~\cite{pfeiffer2021duality,daudin2022optimal}), and in mean-field games
with state or other feasibility constraints
(e.g.~\cite{cannarsa2018existence,hu2025mean}).

In this paper, we study a general continuous-time dynamic portfolio
optimization problem subject to a hard CVaR constraint on terminal
loss. Our framework does not require self-financing wealth dynamics or
market completeness and accommodates cumulative external cash flows
and nontraded endowment risk. In this general setting, the
problem cannot be reduced to a static choice of terminal payoff:
nontraded risks need not be replicable, while general running costs
make the entire state--control path relevant. In the frictional
extensions, the current position becomes an additional state variable
and trading speed becomes the control, so the timing and speed of
portfolio adjustment also affect optimality. The optimal strategy must
therefore be characterized dynamically rather than recovered solely
from an optimal terminal payoff.

To analyze this general portfolio optimization problem under hard CVaR constraints, we use the
Rockafellar--Uryasev representation of CVaR to obtain a convex
primal--dual formulation over the adapted trading strategy and an
auxiliary scalar threshold. This formulation preserves the dynamic
nature of the portfolio problem while yielding a modular solution
approach based on standard unconstrained stochastic control problems.
It provides the basis for both our theoretical analysis and the
computational method developed below.

\paragraph{Contributions.}
Our contribution to this line of research is threefold.

First, we provide a convex-analytic treatment of continuous-time
portfolio optimization under a hard terminal CVaR constraint in a
market that may be incomplete. The investor minimizes a general convex
expected running and terminal cost over admissible adapted trading
strategies. The framework covers, among other examples,
linear--quadratic and CARA specifications, as well as extended-valued
CRRA and logarithmic criteria whenever the corresponding
finite-feasibility condition is satisfied. We show that the CVaR
threshold can be restricted to a common compact interval and establish
existence of a primal optimizer. Under strict convexity, the optimal
trading strategy is unique. We further establish existence of
Lagrangian minimizers for every multiplier. Under a Slater condition,
we prove strong duality, existence and an explicit bound for an
optimal multiplier, and recovery of a primal optimizer from the dual
problem. These results do not require market completeness.

Second, we exploit the resulting max--min dual structure to develop a
modular numerical method. For each fixed Lagrange multiplier and CVaR
threshold, the innermost problem is a standard unconstrained stochastic
control problem, allowing existing dynamic-programming, PDE, or
stochastic-control solvers to be used as an oracle. An inner
golden-section search selects the threshold, while an outer bisection
adjusts the multiplier using the CVaR constraint residual. We quantify
the error of the inner search and prove convergence of the combined
procedure when the inner accuracy is increased appropriately relative
to the outer bisection. In particular, the constraint residual
converges to zero and the objective value converges to the primal
optimum. Under strict convexity, the computed controls converge weakly
to the unique optimal strategy; strong convexity strengthens this
conclusion to strong convergence in $L^2_{\mathbb F}$.

Third, we use numerical experiments to examine the effect of a binding terminal CVaR constraint in four environments: a frictionless complete-market benchmark, an incomplete market with nontraded endowment risk, a model with quadratic trading-rate regularization, and a model with square-root price impact.
The first three settings remain within the convex structure motivating our analysis, while the square-root specification provides a robustness experiment beyond the setting covered by our convergence theory. 
Three empirical patterns emerge across the reported calibrations.
First, the CVaR constraint reshapes the terminal-wealth distribution asymmetrically, with the adjustment concentrated in the downside tail rather than taking the form of a uniform contraction. 
This illustrates the distinction from conventional risk aversion: CVaR targets adverse tail outcomes directly rather than penalizing risk throughout the distribution.
Second, the response combines an initially conservative adjustment with state-dependent feedback.
Although the constraint is imposed only at maturity, when it binds, current wealth becomes a relevant state variable for the exposure policy: the computed strategies reduce risky exposure following adverse outcomes while maintaining greater participation following favorable outcomes.
Third, in the reported zero-interest, no-positive-inflow experiments, the unconditional loss-CVaR diagnostic remains below the terminal risk limit at every reported intermediate date across all four environments. 
Although the constraint is imposed only at maturity, the behavior of the unconditional interim CVaR diagnostic suggests that, in these experiments, the terminal constraint may also discipline risk taking earlier in the investment horizon.

More broadly, our work is connected to the literature on portfolio optimization under alternative downside-risk and distributional constraints.
Related formulations impose VaR constraints
\cite{basak2001var,pirvu2007portfolio,cuoco2008risklimits}
or the closely related Capital-at-Risk constraint
\cite{emmeretal2001bounded}, formulate portfolio choice in terms of wealth quantiles
\cite{he2011portfolio}, employ dynamic multivariate risk measures
\cite{feinstein2017recursive}, or impose utility-based shortfall-risk constraints
\cite{gundel2008shortfall}.
Our analysis also relates to benchmark-relative portfolio formulations that restrict terminal payoffs through divergence constraints, including Bregman--Wasserstein constraints
\cite{pesenti2024BW}
and their asymmetric $\alpha$-Bregman--Wasserstein extension
\cite{pesenti2026WP-alphaBW}.

The remainder of the paper is organized as follows.
Section~\ref{sec:setup} introduces the market, endowment, objective, and terminal
CVaR constraint. Section~\ref{sec:reformulation} develops the primal and dual
formulations. Section~\ref{sec:algorithm} presents the nested search algorithm
and its convergence analysis. Section~\ref{sec:numerics} studies the complete-
and incomplete-market benchmarks, and Section~\ref{sec:extension} adds
quadratic trading rate regularization and nonlinear price impact.
Section~\ref{sec:conclusion} concludes.

\section{Market Model and CVaR-Constrained Portfolio Problem}
\label{sec:setup}

This section formulates the dynamic portfolio management problem studied in the paper. We first introduce a general continuous-time market with traded assets, cumulative endowment, and potentially nontraded background risk. Portfolio strategies are evaluated through a running and terminal cost criterion and are required to satisfy a CVaR constraint on terminal losses. This formulation accommodates incomplete markets and provides the setting for the reformulation and algorithm developed in the subsequent sections.

We then specialize the general model to a one-stock linear--quadratic Black--Scholes benchmark with terminal CVaR constraints. The benchmark is analytically transparent: in the absence of a binding CVaR constraint, the optimal risky-asset holding reduces to the classical Merton dollar exposure, while an active constraint changes the feedback strategy to protect the lower tail of terminal wealth. This structure also provides a common framework for the numerical experiments with endowment risk and trading frictions.

We work on a filtered probability space
$(\Omega,\F,\mathds{F} = \{\F_t\}_{t\in[0,T]},\P)$
satisfying the usual conditions, where $[0,T]$ denotes the investment horizon. Let
$\left\{B_t=(B_t^{(1)},B_t^{(2)},\ldots,B_t^{(d)})^\top\right\}_{t\in[0,T]}$,
be a standard $d$-dimensional Brownian motion adapted to $\mathds{F}$, and let $\{B_t^\perp\}_{t\in[0,T]}$ be a 1-dim Brownian motion on $\mathds{F}$ that is independent of $\{B_t\}_{t\in[0,T]}$.

The market contains one risk-free asset and $m\leq d$ risky assets. The risk-free asset earns a constant interest rate $r\geq0$. The risky asset price vector
$S_t=(S_t^{(1)},S_t^{(2)},\ldots,S_t^{(m)})^\top$
satisfies
\begin{align}\label{dyn:stock}
dS_t = rS_t dt + \diag\{S_t\}\left(\mu(t,S_t)dt+\sigma(t,S_t)\,dB_t\right)\,, \qquad S_0\in(0,\infty)^m .
\end{align}
Here the $\mu:[0,T]\times\R^m\to\R^m$ is the vector of excess dollar returns, and $\sigma:[0,T]\times\R^m\to\R^{m\times d}$ is the volatility matrix. Since $m$ may be smaller than $d$, the risky assets need not span all Brownian shocks.

The investor receives a cumulative endowment $\zeta$ with dynamics
\begin{align}\label{dyn:endowment}
d\zeta_t = db_t+\beta_t\,dB_t + \beta_t^\perp dB_t^\perp.
\end{align}
The finite-variation process $b$ captures deterministic or predictable cash flows, including lump-sum payments, while the predictable row vector $\beta_t\in\R^{1\times d}$ captures the endowment's exposure to Brownian shocks. Only the component of $\beta_t$ lying in the span of the traded volatility matrix can be hedged by the risky assets. In contrast, 
the remaining component $\beta^\perp_t\in\R$, driven by the independent Brownian motion $B^\perp$, is nontraded background risk and is a source of market incompleteness in the model.

We assume that the filtered probability space supports a process $S$
satisfying~\eqref{dyn:stock}, and we fix this weak solution throughout
the paper. We impose the following regularity conditions on the exogenous coefficients and endowment processes: 
\begin{assumption}[Regularity of exogenous processes]\label{ass:market}
The functions $\mu$ and $\sigma$ are jointly  
continuous. In addition, $\mu$ and $\sigma$ are uniformly bounded, and for every $(t,s)\in[0,T]\times\mathbb R^m$,
$\sigma(t,s)\sigma(t,s)^\top$
is positive definite.
The process $b$ is predictable and of finite variation, with $b_0=0$,
and $\beta$ and $\beta^\perp$ are predictable. Moreover, there exists
$p>1$ such that  
\begin{align*}
\E\left[
|b|_T^{2p}
+
\left(\int_0^T \|\beta_t\|^2 + (\beta^\perp_t)^2\,dt\right)^{p}
\right]<\infty.
\end{align*}
\end{assumption}

Let $\phi = \left\{\phi_t = \left(\phi_t^{(1)}, \phi_t^{(2)}, \ldots, \phi_t^{(m)}\right)^\top\right\}_{t\in[0,T]}$ denote the portfolio of the investor, where
$\phi_t^{(i)}$ represents the number of shares held in the $(i)$--th risky asset. 
Available to the investor are trading strategies that are $\R^m$-valued predictable processes $\varphi=\left\{\varphi_t := \diag\{S_t\} \phi_t \right\}_{t\in[0,T]}$, 
where $\varphi_t^{(i)} = \phi_t^{(i)} S_t^{(i)}$ denotes the dollar risky exposure in the $(i)$--th risky asset. The remaining wealth  is invested in the risk-free asset, with the controlled wealth process $W^\varphi = \{W^\varphi_t\}_{t\in[0,T]}$ follows
\begin{align}\label{dyn:wealth}
dW_t^\varphi
  &=r\left(W_t^\varphi - \phi_t^\top S_t\right)dt + \phi^\top_t dS_t + d\zeta_t
\notag\\
&=\left(r W_t^\varphi+\varphi_t^\top \mu(t,S_t)\right)\,dt + db_t
+
\left(\varphi_t^\top\sigma(t,S_t)+\beta_t\right)\,dB_t + \beta^\perp_t dB_t^\perp,
\qquad W_0^\varphi=w_0\,,
\end{align}
where $w_0>0$ denotes the initial wealth of the investor.

Next, we denote by $\A$ the set of admissible strategies as follows:

\begin{assumption}\label{ass:action_set}
The set of admissible strategies is 
\[
\mathcal A
:=
\left\{
\varphi\in
L^2_{\mathbb F}([0,T]\times\Omega;\mathbb R^m):
\varphi_t\in A,\quad
dt\otimes d\mathbb P\text{-a.e.}
\right\}.
\]
where the action set $A\subset\mathbb R^m$ is nonempty, compact, and convex. 
\end{assumption}

The investor evaluates a strategy through the cost functional
\begin{align}\label{target:J}
J(\varphi)
:=
\E\left[
\int_0^T f(t,W_t^\varphi,\varphi_t)\,dt
+
g(W_T^\varphi)
\right],
\end{align}
where
$f 
$ is the running cost function and
$g$ is the terminal cost function. As the investor aims to minimize the cost functional \eqref{target:J}, the utility maximization framework fits within our setting by treating costs as negative rewards.

In addition to the cost functional, the investor requires a risk constraint on a function $\ell:\R\to\R$ applied to the terminal portfolio wealth. The risk constraint is given by the Conditional Value-at-Risk (CVaR) \citep{acerbi2002portfolio}. 
The CVaR at a confidence level $\alpha\in(0,1)$ for the loss random variable $\ell(W_T)$ is defined as 
\begin{align}\label{def:CVaR}
\operatorname{CVaR}_{\alpha}(\ell(W_T)) = \frac{1}{1-\alpha}\int_{\alpha}^1 \operatorname{VaR}_{u}(\ell(W_T)) du,
\end{align}
where the Value-at-Risk at a confidence level of $\alpha\in(0,1)$ is given by
\begin{align}\label{def:VaR}
\operatorname{VaR}_{\alpha}(\ell(W_T)) = \inf\{x\in\R: \P(\ell(W_T)\leq x)\geq \alpha\}. 
\end{align}
Given a confidence level $\alpha\in(0,1)$ and a risk limit $c\in\mathbb{R}$, the investor requires a strategy that satisfies
\begin{align}\label{constraint:cvar}
\operatorname{CVaR}_{\alpha}\bigl(\ell(W_T^\varphi)\bigr)\leq c .
\end{align}
For the choice $\ell(w) = -w$, the CVaR constraint limits the average severity of the worst $(1-\alpha)$ fraction of terminal losses to $c$.
Summarizing, the investor's CVaR-constrained portfolio problem is therefore
\begin{align}\label{prob:portfolio_cvar}
\inf_{\varphi\in\A}\quad
& J(\varphi) \\
\text{subject to}\quad
& \operatorname{CVaR}_{\alpha}\bigl(\ell(W_T^\varphi)\bigr)\leq c, \nonumber\\[0.5em]
& (S_t,W_t^\varphi) \text{ satisfy } \eqref{dyn:stock}\text{ and }\eqref{dyn:wealth}. \nonumber
\end{align}

Throughout the paper, we impose the following regularity conditions on
the objective and terminal loss functions.

\begin{assumption}[Regularity of cost functionals]
\label{ass:regularity}
The functions $f,g,\ell$ satisfy the following conditions.
\begin{enumerate}
    \item \label{ass:f_regularity} The function
$f:[0,T]\times\mathbb R\times A
\rightarrow
(-\infty,+\infty]$
is jointly Borel measurable and, for almost every $t\in[0,T]$, the
mapping $(w,a)\mapsto f(t,w,a)$ is proper, convex, and lower
semicontinuous. Moreover, there exists
$\underline f\in L^1(0,T)$ such that
$f(t,w,a)\ge \underline f(t)$
for almost every $t$ and all $(w,a)\in\mathbb R\times A$.
\item \label{ass:g_regularity}  The function
$g:\mathbb R\rightarrow(-\infty,+\infty]$
is proper, convex, and lower semicontinuous.
\item \label{ass:l_regularity} The terminal loss
$\ell:\mathbb R\rightarrow\mathbb R$
is convex and lower semicontinuous. Moreover, for the same $p>1$ as in
Assumption~\ref{ass:market}, there exists $L_\ell>0$ such that
\[
|\ell(w)|
\le
L_\ell(1+|w|^p),
\qquad w\in\mathbb R.
\]
\end{enumerate}

\end{assumption}

\begin{remark}\label{rmk:loss_integrable}
Together with Assumptions~\ref{ass:market} and~\ref{ass:action_set}, Assumption~\ref{ass:regularity}\ref{ass:l_regularity} ensures that
$\ell(W_T^\varphi)$ is integrable for every
$\varphi\in\mathcal A$. 
Indeed, the standard moment estimate for the wealth equation, together
with the boundedness of $\mu$ and $\sigma$ and the compactness of $A$,
gives
$\mathbb E\bigl[|W_T^\varphi|^{2p}\bigr]<\infty$
for every $\varphi\in\mathcal A$. Assumption~\ref{ass:regularity}~\ref{ass:l_regularity} then implies
$\mathbb E\bigl[|\ell(W_T^\varphi)|^2\bigr]<\infty$,
and hence $\ell(W_T^\varphi)$ is integrable. 
A uniform square-integrability estimate is
established in Proposition~\ref{prop:eta_compactification}.
 
\end{remark}

\begin{remark}\label{rmk:J_lower_bound}
Assumption~\ref{ass:regularity}\ref{ass:g_regularity} ensures that there exist $a_g,b_g\in\mathbb R$ such that
$g(w)\ge a_gw+b_g$,
for all $w\in\mathbb R$.
Together with Assumption~\ref{ass:regularity}\ref{ass:f_regularity} on $f$ and the uniform
first-moment bound on admissible wealth processes, this yields
$\inf_{\varphi\in\mathcal A}J(\varphi)>-\infty$. We henceforth fix $J_{\rm low}\in\mathbb R$ such that
$J(\varphi)\ge J_{\rm low}$ for
any $\varphi\in\mathcal A.$

Allowing the objective costs to take the value $+\infty$ permits
domain restrictions in utility-based criteria. In particular, the
framework accommodates CARA terminal costs, as well as extended-valued
CRRA and logarithmic costs whenever a finite-cost feasible strategy
exists.
\end{remark}

For the existence of a primal optimizer, we require the feasibility of the optimization problem.
\begin{assumption}[Feasibility]\label{ass:feasibility} 
There exists at least one admissible strategy $\varphi\in\mathcal A$ such that 
\[ J(\varphi)<\infty\qquad\text{and}\qquad\operatorname{CVaR}_{\alpha}(\ell(W_T^\varphi))\le c. \] 
\end{assumption}

A sufficient condition for the feasibility assumption is that $0\in A$, that is the no-trade wealth
\begin{align*}
W_T^0=e^{rT}w_0+\int_0^T e^{r(T-t)}\,d\zeta_t    
\end{align*}
satisfies $\operatorname{CVaR}_{\alpha}(\ell(W_T^0))\leq c$, and the corresponding objective $J(0)$ is finite.

The formulation in Section~\ref{sec:setup} is deliberately stated at a general level. It accommodates alternative objective specifications, including CARA
terminal costs and, whenever the effective-domain and finite-feasibility
requirements are satisfied, CRRA and logarithmic costs, as well as richer market dynamics and additional state variables to include consumption control or market frictions. The reformulation and algorithm developed in Sections~\ref{sec:reformulation} and~\ref{sec:algorithm} are driven primarily by the convex structure of the CVaR representation and do not depend on the particular objective function or market specification.

\section{Constrained Convex Optimization Reformulation}\label{sec:reformulation}
In this section, we reformulate the risk-constrained stochastic control problem \eqref{prob:portfolio_cvar} as a convex constrained optimization problem. The main device is the auxiliary-variable representation of CVaR, which converts the terminal risk constraint into a convex constraint in both the control and an additional scalar threshold variable. This formulation then allows us to introduce a Lagrangian dual problem and establish strong duality under an additional Slater-type condition.

\subsection{Reformulation and Primal Problem}\label{ssec:primal}
By Remark~2.4, $\ell(W_T^\varphi)$ is integrable for every
$\varphi\in\mathcal A$. Hence, its CVaR at level
$\alpha\in(0,1)$ admits the representation introduced in
\cite{rockafellar2000optimization}:
\begin{align}\label{cvar:rockafellar}
\operatorname{CVaR}_{\alpha}(\ell(W_T^\varphi)) = \inf_{\eta\in\mathbb R} \left\{ \eta+\frac{1}{1-\alpha}\mathbb E\big[(\ell(W_T^\varphi)-\eta)^+\big] \right\}.    
\end{align}
The infimum is attained, and its minimizers are the generalized
$\alpha$-quantiles of $\ell(W_T^\varphi)$. 
Further, for each $(\varphi,\eta)\in\mathcal A\times\mathbb R$, define 
\begin{equation}\label{eq:constraint_C}
C(\varphi,\eta) := \eta+\frac{1}{1-\alpha} \mathbb E\big[(\ell(W_T^\varphi)-\eta)^+\big]-c . \end{equation}
Then the constraint $\operatorname{CVaR}_{\alpha}(\ell(W_T^\varphi))\le c$ is equivalent to the existence of $\eta\in\mathbb R$ such that $C(\varphi,\eta)\le 0$. 
Thus the portfolio optimization problem can be written as 
\begin{align*}
p^\star := \inf_{\varphi\in\mathcal A,\ \eta\in\mathbb R} J(\varphi) \quad \text{subject to} \quad C(\varphi,\eta)\le 0 . \tag{P} \label{eq:primal_problem} 
\end{align*}
Note that the scalar variable $\eta$ is not a trading decision. It is an auxiliary threshold variable introduced by the CVaR representation. This formulation separates the portfolio control $\varphi$ from the scalar risk threshold $\eta$, while preserving convexity.

We now present the main analytical properties of the primal formulation. These properties will be used later to derive the Lagrangian dual problem and the dual-based numerical algorithm.

\begin{proposition}[Convexity of the primal problem]\label{prop:convexity}
Under Assumptions \ref{ass:action_set} and \ref{ass:regularity}, the extended-valued functional $J:\mathcal A\to(-\infty,+\infty] $ defined in \eqref{target:J} is convex in $\varphi$, and the finite-valued mapping $C:\mathcal A\times\mathbb R\to\mathbb R$ defined in \eqref{eq:constraint_C} is jointly convex in $(\varphi,\eta)$, and the primal problem is a convex optimization problem.
\end{proposition}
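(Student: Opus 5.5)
The plan is to exploit the fact that the wealth process depends affinely on the control. Fix $\varphi^0,\varphi^1\in\mathcal A$, $\lambda\in[0,1]$, and set $\varphi^\lambda:=\lambda\varphi^1+(1-\lambda)\varphi^0$. Since $A$ is convex and $L^2_{\mathbb F}$ is a vector space, $\varphi^\lambda\in\mathcal A$, so $\mathcal A$ (and hence $\mathcal A\times\mathbb R$) is convex. The first step is to show that
\[
W_t^{\varphi^\lambda}=\lambda W_t^{\varphi^1}+(1-\lambda)W_t^{\varphi^0}\quad\text{for all } t,\ \mathbb P\text{-a.s.}
\]
The wealth equation \eqref{dyn:wealth} is linear in $(W,\varphi)$, and $S$ is a fixed exogenous process independent of $\varphi$. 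Its explicit solution is
\[
W_t^\varphi=e^{rt}w_0+\int_0^t e^{r(t-s)}\bigl(\varphi_s^\top\mu(s,S_s)\,ds+\varphi_s^\top\sigma(s,S_s)\,dB_s+d\zeta_s\bigr).
\]
This expression is affine in $\varphi$; the stochastic integrals are well defined because $\mu,\sigma$ are bounded and $\varphi\in L^2_{\mathbb F}$. Equivalently, the convex combination of the two solutions solves the equation driven by $\varphi^\lambda$, and pathwise uniqueness for this linear SDE with the same initial value $w_0$ and the same endowment gives the identity.

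Next comes convexity of $J$. For a.e. $t$, joint convexity of $(w,a)\mapsto f(t,w,a)$ together with the affine identity gives pointwise
\[
f(t,W_t^{\varphi^\lambda},\varphi^\lambda_t)\le \lambda f(t,W_t^{\varphi^1},\varphi^1_t)+(1-\lambda)f(t,W_t^{\varphi^0},\varphi^0_t).
\]
The same argument with $g$ gives the analogous inequality at $T$. Integrating and taking expectations requires that these quantities be well defined in $(-\infty,+\infty]$. This holds because $f\ge\underline f\in L^1$ and $g(w)\ge a_gw+b_g$ with $W_T^\varphi$ integrable (Remarks~\ref{rmk:loss_integrable} and~\ref{rmk:J_lower_bound}), so negative parts are integrable. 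Expectation is linear and monotone on such functions, with the convention $0\cdot\infty=0$ handling the case where $J(\varphi^i)=+\infty$. Hence $J(\varphi^\lambda)\le\lambda J(\varphi^1)+(1-\lambda)J(\varphi^0)$.

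For $C$, finiteness follows from the integrability of $\ell(W_T^\varphi)$ in Remark~\ref{rmk:loss_integrable}. For joint convexity, take also $\eta^0,\eta^1$ and set $\eta^\lambda=\lambda\eta^1+(1-\lambda)\eta^0$.
\begin{itemize}
\item By convexity of $\ell$ and the affine wealth identity, $\ell(W_T^{\varphi^\lambda})-\eta^\lambda\le \lambda(\ell(W_T^{\varphi^1})-\eta^1)+(1-\lambda)(\ell(W_T^{\varphi^0})-\eta^0)$.
\item The map $x\mapsto x^+$ is convex and nondecreasing, so $(\ell(W_T^{\varphi^\lambda})-\eta^\lambda)^+\le\lambda(\cdot)^++(1-\lambda)(\cdot)^+$.
\item Taking expectations and adding the linear term $\eta$ and the constant $-c$ yields joint convexity of $C$.
\end{itemize}

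The primal problem then minimizes a convex functional over the convex set $\{(\varphi,\eta)\in\mathcal A\times\mathbb R: C(\varphi,\eta)\le0\}$. This set is convex as a sublevel set of a convex function, so \eqref{eq:primal_problem} is a convex program. The only delicate step is the affine dependence of $W^\varphi$ on $\varphi$. This needs the fixed weak solution $S$ to be unaffected by the control, and uniqueness of the linear wealth SDE, which I would justify via the variation-of-constants formula above rather than any general SDE theorem. The handling of $+\infty$ values in $J$ is routine bookkeeping.
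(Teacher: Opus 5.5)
Your proposal is correct and follows essentially the same route as the paper: affine dependence of $W^\varphi$ on $\varphi$ (from linearity and uniqueness of the wealth equation), pointwise convexity of $f$ and $g$, and convexity together with monotonicity of $x\mapsto x^+$ applied to the convex $\ell$, followed by taking expectations. Your additional bookkeeping (the variation-of-constants formula, integrable negative parts so that expectations are well defined in $(-\infty,+\infty]$, and finiteness of $C$ via Remark~\ref{rmk:loss_integrable}, which also uses Assumption~\ref{ass:market}) makes explicit details that the paper leaves implicit.
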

\begin{proof}
For any $\varphi^1,\varphi^2\in\mathcal A$ and $\theta\in[0,1]$, define $\varphi^\theta:=\theta\varphi^1+(1-\theta)\varphi^2$. By the linearity of the state equation and the uniqueness of solutions,
\[
W_t^{\varphi^\theta}=\theta W_t^{\varphi^1}+(1-\theta)W_t^{\varphi^2},\qquad 0\le t\le T.
\]
Since $f(t,\cdot,\cdot)$ is jointly convex and $g$ is convex, we have
\[
f(t,W_t^{\varphi^\theta},\varphi_t^\theta)\le\theta f(t,W_t^{\varphi^1},\varphi_t^1)+(1-\theta)f(t,W_t^{\varphi^2},\varphi_t^2),
\]
and similarly $g(W_T^{\varphi^\theta})\le \theta g(W_T^{\varphi^1})+(1-\theta)g(W_T^{\varphi^2})$. Taking expectations gives $J(\varphi^\theta)\le \theta J(\varphi^1)+(1-\theta)J(\varphi^2)$. Therefore, the objective function $J(\varphi)$ is convex in the control $\varphi$ on $\mathcal A$.

It remains to show that $C$ is convex. Let $\eta^\theta:=\theta\eta^1+(1-\theta)\eta^2$. Since $\ell$ is convex and $W_T^{\varphi}$ is affine in $\varphi$,
\[
\ell(W_T^{\varphi^\theta})\le\theta \ell(W_T^{\varphi^1})+(1-\theta)\ell(W_T^{\varphi^2}).
\]
Therefore,
\[
\ell(W_T^{\varphi^\theta})-\eta^\theta\le\theta(\ell(W_T^{\varphi^1})-\eta^1)+(1-\theta)(\ell(W_T^{\varphi^2})-\eta^2).
\]
Because $x\mapsto x_+$ is convex and nondecreasing, it follows that 
\[
(\ell(W_T^{\varphi^\theta})-\eta^\theta)^+ \le \theta(\ell(W_T^{\varphi^1})-\eta^1)^+ + (1-\theta)(\ell(W_T^{\varphi^2})-\eta^2)^+.
\]
Taking expectations and adding the affine term $\eta$ shows that $C(\varphi,\eta)$ is jointly convex. Hence the feasible set $\{(\varphi,\eta):\varphi\in\mathcal A,\ C(\varphi,\eta)\le 0\}$ is convex, and the primal problem is convex.
\end{proof}

We next establish a uniform square-integrability estimate for the
terminal losses and use it to restrict the auxiliary threshold to a
common compact interval that does not depend on the constraint level $c$.

\begin{proposition}[Compactification of the CVaR threshold]
\label{prop:eta_compactification}
Under Assumptions~\ref{ass:market}, \ref{ass:action_set}, and
\ref{ass:regularity},  
there exists
$M_X<\infty$ such that
$\sup_{\varphi\in\mathcal A}
\E\left[|\ell(W_T^\varphi)|^2\right]
\le M_X^2 $.
Let 
$E=
\left[
-\frac{M_X}{\sqrt{\alpha}},
\frac{M_X}{\sqrt{1-\alpha}}
\right]$,
then for every $\varphi\in\mathcal A$,
\[
\inf_{\eta\in\mathbb R}
\left\{
\eta+\frac{1}{1-\alpha}
\E[(\ell(W_T^\varphi)-\eta)^+]
\right\}
=
\inf_{\eta\in E}
\left\{
\eta+\frac{1}{1-\alpha}
\E[(\ell(W_T^\varphi)-\eta)^+]
\right\}.
\]
Consequently, \eqref{eq:primal_problem} is equivalent to
\[
p^\star
=
\inf_{\varphi\in\mathcal A,\ \eta\in E}
J(\varphi)
\quad
\textnormal{subject to}
\quad
C(\varphi,\eta)\le 0.
\tag{$\mathrm P_E$}
\label{eq:compact_primal_problem}
\]
\end{proposition}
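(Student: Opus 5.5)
The proof has two parts: a uniform moment bound, and then a quantile localization argument.

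\emph{Uniform moment bound.} Write the wealth in variation-of-constants form,
\[
e^{-rT}W_T^\varphi=w_0+\int_0^T e^{-rt}\bigl(\varphi_t^\top\mu\,dt+db_t+(\varphi_t^\top\sigma+\beta_t)\,dB_t+\beta_t^\perp dB_t^\perp\bigr).
\]
We bound each term in $L^{2p}$ uniformly over $\varphi\in\mathcal A$.
\begin{itemize}
\item Drift term: since $A$ is compact and $\mu$ is bounded, $|\varphi_t^\top\mu|\le K$ for a constant $K$ independent of $\varphi$.
\item Finite-variation term: it is bounded by $|b|_T$, which lies in $L^{2p}$ by Assumption~\ref{ass:market}.
\item Stochastic integrals: by Burkholder--Davis--Gundy, their $2p$-th moments are bounded by $C_p\,\E\bigl[(\int_0^T \|\varphi_t^\top\sigma\|^2+\|\beta_t\|^2+(\beta_t^\perp)^2dt)^{p}\bigr]$. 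The $\varphi$-part is bounded uniformly by compactness of $A$ and boundedness of $\sigma$; the remaining part is finite by Assumption~\ref{ass:market}.
\end{itemize}
Hence $\sup_{\varphi\in\mathcal A}\E|W_T^\varphi|^{2p}<\infty$. The growth bound $|\ell(w)|\le L_\ell(1+|w|^p)$ then gives $\E|\ell(W_T^\varphi)|^2\le 2L_\ell^2(1+\E|W_T^\varphi|^{2p})$. Taking the supremum over $\varphi$ defines $M_X$.

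\emph{Localization of the minimizer.} Fix $\varphi$ and set $X=\ell(W_T^\varphi)$. By the Rockafellar--Uryasev result quoted above, the infimum in \eqref{cvar:rockafellar} is attained exactly on the set of $\alpha$-quantiles of $X$, which includes $q=\operatorname{VaR}_\alpha(X)$. It therefore suffices to show $q\in E$, since then the infimum over $\mathbb R$ is attained inside $E$ and the two infima coincide.

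\begin{itemize}
\item Upper bound: if $q>0$, then $\P(X\ge q)\ge 1-\alpha$, because $\P(X<q)\le\alpha$ by the definition of the quantile. Chebyshev gives $(1-\alpha)q^2\le \E X^2\le M_X^2$, so $q\le M_X/\sqrt{1-\alpha}$.
\item Lower bound: if $q<0$, then $\P(X\le q)\ge\alpha$ by right-continuity of the distribution function. Chebyshev again gives $\alpha q^2\le M_X^2$, so $q\ge -M_X/\sqrt\alpha$.
\end{itemize}
The main care needed here is using the correct one-sided inequalities at the quantile, $\P(X<q)\le\alpha\le\P(X\le q)$.

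\emph{Equivalence of (P) and $(\mathrm P_E)$.} Restricting $\eta$ to $E$ can only increase the infimum, so $p^\star$ is at most the value of $(\mathrm P_E)$. Conversely, let $(\varphi,\eta)$ be feasible for \eqref{eq:primal_problem}. Then
\[
\operatorname{CVaR}_\alpha(\ell(W_T^\varphi))\le C(\varphi,\eta)+c\le c.
\]
Replacing $\eta$ by $q\in E$ gives $C(\varphi,q)=\operatorname{CVaR}_\alpha-c\le 0$, with the same objective value $J(\varphi)$. Hence both problems have the same value $p^\star$.

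The only genuinely technical step is the uniform $L^{2p}$ estimate. The point to watch is that the BDG bound on the endowment integrals, and the $|b|_T$ term, use exactly the $p$-th moment integrability assumed in Assumption~\ref{ass:market}, with no dependence on $\varphi$ beyond the compactness of $A$.
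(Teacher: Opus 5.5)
Your proposal is correct and follows the same route as the paper. You combine a uniform $L^{2p}$ bound on $W_T^\varphi$ with the growth condition on $\ell$, use a Chebyshev argument to place an $\alpha$-quantile in $E$, and then do the feasibility swap $\eta\mapsto q$. The differences are minor: you derive the moment bound explicitly via variation of constants and BDG, which visibly uses the endowment integrability in Assumption~\ref{ass:market}, unlike the paper's cited bound $C_p(1+|w_0|^{2p})$. You also localize only $\operatorname{VaR}_\alpha(X)$ rather than every minimizer, which is all the stated equality of infima requires.
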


\begin{proof}
By the standard moment estimate for SDEs with coefficients of linear
growth, see \cite{mao2007stochastic} or
\cite[Theorem 2.1]{kim2014p}, for  $p\ge 1$ from Assumption~\ref{ass:market}, there exists
$C_p>0$, independent of $\varphi\in\mathcal A$, such that
\[
\sup_{\varphi\in\mathcal A}
\mathbb E[|W_T^\varphi|^{2p}]
\le C_p(1+|w_0|^{2p})<\infty.
\]
Since $|\ell(w)|\le K(1+|w|^p)$, we have
$|\ell(W_T^\varphi)|^2
\le
2K^2\left(1+|W_T^\varphi|^{2p}\right)$,
and therefore
$\sup_{\varphi\in\mathcal A}
\mathbb E[|\ell(W_T^\varphi)|^2]< \infty$. 
Hence, there exists 
$M_X>0$ that only depends on 
the compact set $A$ and exogenous market coefficients, 
such that
$\sup_{\varphi\in\mathcal A}
\mathbb E[|\ell(W_T^\varphi)|^2]\le M_X^2$ .

Now fix $\varphi\in\mathcal A$ and write
$
X=\ell(W_T^\varphi).$
The minimizers of
$
\eta\mapsto
\eta+\frac{1}{1-\alpha}\E[(X-\eta)^+]
$
are the $\alpha$-quantiles of $X$, namely the points $\eta$ satisfying
\[
\mathbb P(X<\eta)\le \alpha\le \mathbb P(X\le \eta).
\]
If
$\eta<-\frac{M_X}{\sqrt{\alpha}},
$
then
\[
\mathbb P(X\le \eta)
\le
\mathbb P(|X|\ge |\eta|)
\le
\frac{\E[|X|^2]}{\eta^2}
<
\alpha.
\]
Thus $\eta$ cannot be an $\alpha$-quantile. Similarly, if
$
\eta>\frac{M_X}{\sqrt{1-\alpha}},
$
then
\[
\mathbb P(X\ge \eta)
\le
\mathbb P(|X|\ge \eta)
\le
\frac{\E[|X|^2]}{\eta^2}
<
1-\alpha.
\]
Hence $\mathbb P(X<\eta)>\alpha$, so $\eta$ cannot be an
$\alpha$-quantile. Therefore every minimizer lies in $E$, and the infimum
over $\mathbb R$ equals the infimum over $E$.

The equivalence between \eqref{eq:primal_problem} and
\eqref{eq:compact_primal_problem} follows directly.
\end{proof}

\begin{proposition}[Existence of a primal optimizer]
\label{prop:primal_existence}
Assume Assumptions~\ref{ass:market}, \ref{ass:action_set},
\ref{ass:regularity}, and~\ref{ass:feasibility} hold. Then the compactified
primal problem
\[
p^\star
=
\inf_{\varphi\in\mathcal A,\ \eta\in E}
J(\varphi)
\quad
\textnormal{subject to}
\quad
C(\varphi,\eta)\le 0
\]
admits an optimal solution
$
(\varphi^\star,\eta^\star)\in\mathcal A\times E$.
Consequently, the original primal problem over
$\mathcal A\times\mathbb R$ also admits an optimal solution. In particular,
one may choose an optimal solution with $\eta^\star\in E$.
\end{proposition}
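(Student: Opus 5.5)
We use the direct method in the calculus of variations, working in the weak topology of $L^2_{\mathbb F}([0,T]\times\Omega;\mathbb R^m)\times\mathbb R$.

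\textbf{Step 1: a minimizing sequence and its limit.} By Assumption~\ref{ass:feasibility}, Proposition~\ref{prop:eta_compactification} and Remark~\ref{rmk:J_lower_bound}, $p^\star$ is finite. Choose $(\varphi^n,\eta^n)\in\mathcal A\times E$ with $C(\varphi^n,\eta^n)\le0$ and $J(\varphi^n)\downarrow p^\star$. Here we use Proposition~\ref{prop:eta_compactification}: replacing $\eta^n$ by an $\alpha$-quantile lying in $E$ only lowers $C$, so we may take $\eta^n\in E$.

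Since $A$ is compact, $\mathcal A$ is bounded in $L^2_{\mathbb F}$. It is also convex and closed: it is a closed subspace of predictable processes intersected with the pointwise constraint $\varphi_t\in A$ for a closed convex $A$. Hence $\mathcal A$ is weakly sequentially compact. As $E$ is compact, we may extract a subsequence with $\varphi^n\rightharpoonup\varphi^\star\in\mathcal A$ weakly and $\eta^n\to\eta^\star\in E$.

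\textbf{Step 2: upgrading to strong convergence.} Weak convergence alone does not let us pass to the limit in the nonlinear maps $J$ and $C$. By Mazur's lemma we pass to forward convex combinations $\tilde\varphi^n\in\operatorname{conv}\{\varphi^k:k\ge n\}$, together with the matching combinations $\tilde\eta^n$ of the $\eta^k$ (same weights). Then $\tilde\varphi^n\to\varphi^\star$ strongly in $L^2$ and $\tilde\eta^n\to\eta^\star$.

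Proposition~\ref{prop:convexity} shows that convex combinations preserve feasibility, since $C(\tilde\varphi^n,\tilde\eta^n)\le0$, and that $J(\tilde\varphi^n)\le\sup_{k\ge n}J(\varphi^k)\to p^\star$. The sequence therefore remains minimizing.

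\textbf{Step 3: limit of the state.} The wealth equation \eqref{dyn:wealth} is affine in $\varphi$, with bounded coefficients $\mu,\sigma$. Itô isometry, BDG and Gronwall give
\[
\E\Bigl[\sup_t|W^{\tilde\varphi^n}_t-W^{\varphi^\star}_t|^2\Bigr]\le K\,\E\int_0^T|\tilde\varphi^n_t-\varphi^\star_t|^2dt\to0 .
\]
Passing to a further subsequence, we get $\tilde\varphi^n\to\varphi^\star$ $dt\otimes d\P$-a.e. and $W^{\tilde\varphi^n}\to W^{\varphi^\star}$ a.e., including at $T$.

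\textbf{Step 4: closedness of the constraint.} By the growth bound on $\ell$ and the uniform $2p$-moment bound in Proposition~\ref{prop:eta_compactification}, the family $\ell(W_T^{\tilde\varphi^n})$ is bounded in $L^{2}$ and hence uniformly integrable. Since $\ell$ is finite and convex, it is continuous, so $\ell(W_T^{\tilde\varphi^n})\to\ell(W_T^{\varphi^\star})$ a.s. Vitali's theorem then gives $C(\tilde\varphi^n,\tilde\eta^n)\to C(\varphi^\star,\eta^\star)$, so $C(\varphi^\star,\eta^\star)\le0$.

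\textbf{Step 5: lower semicontinuity of $J$ (the main obstacle).} We must show $J(\varphi^\star)\le\liminf J(\tilde\varphi^n)$. Since $f$ and $g$ may take the value $+\infty$, we cannot use continuity and instead use Fatou's lemma.

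For the running cost, $f(t,\cdot,\cdot)$ is lower semicontinuous and bounded below by $\underline f\in L^1$. Applying Fatou to $f-\underline f$ along the a.e. convergent subsequence gives
\[
\E\int_0^T f(t,W_t^{\varphi^\star},\varphi^\star_t)\,dt\le\liminf_n\E\int_0^T f(t,W_t^{\tilde\varphi^n},\tilde\varphi^n_t)\,dt .
\]

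For the terminal cost, $g$ is only bounded below by the affine minorant $a_gw+b_g$ from Remark~\ref{rmk:J_lower_bound}. We therefore apply Fatou to $g(w)-a_gw-b_g\ge0$. The affine part converges in $L^1$, because $W_T^{\tilde\varphi^n}\to W_T^{\varphi^\star}$ in $L^2$.

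The delicate point is that both the running and terminal terms must be controlled along the same subsequence. We handle this by taking $\liminf$ of the sum, using superadditivity of $\liminf$, so that each Fatou application is made along the common subsequence. This gives $J(\varphi^\star)\le p^\star$, and hence $(\varphi^\star,\eta^\star)$ is optimal.

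Finally, Proposition~\ref{prop:eta_compactification} shows that the problems over $E$ and over $\mathbb R$ have the same value. Consequently $(\varphi^\star,\eta^\star)$ also solves the original problem \eqref{eq:primal_problem}, with $\eta^\star\in E$.
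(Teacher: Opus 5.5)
Your proof is correct and follows essentially the same direct-method argument as the paper: weak compactness of $\mathcal A\times E$, convexity of $J$ and $C$, and Fatou-based lower semicontinuity along strongly convergent controls, using $f-\underline f\ge0$ and $g(w)-a_gw-b_g\ge0$. The only difference is that you apply Mazur's lemma explicitly to produce a strongly convergent feasible minimizing sequence and use Vitali to get continuity of $C$, whereas the paper invokes the equivalent fact that a convex, strongly lower semicontinuous functional is weakly lower semicontinuous and uses Fatou on the nonnegative hinge term.
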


\begin{proof}

By Assumption~\ref{ass:feasibility} and
Proposition~\ref{prop:eta_compactification}, the feasible set of the
compactified problem is nonempty.  Define
\[
\mathbb A
:=
\{(\varphi,\eta)\in\mathcal A\times E:C(\varphi,\eta)\le 0\}.
\]

Since $A$ is compact,
the set $\mathcal A$ is bounded in
$L^2_{\mathbb F}([0,T]\times\Omega;\mathbb R^m)$. Moreover, $\mathcal A$
is convex by the convexity of $A$, and it is closed: if
$\varphi^n\to\varphi$ in $L^2_{\mathbb F}$, then, up to a subsequence,
$\varphi_t^n\to\varphi_t$ $dt\otimes d\mathbb P$-a.e.; since $A$ is
closed, $\varphi_t\in A$ a.e. Hence $\varphi\in\mathcal A$. Thus
$\mathcal A$ is closed, bounded, and convex in the Hilbert space
$L^2_{\mathbb F}$, and is therefore weakly compact. Since $E$ is compact,
$\mathcal A\times E$ is weakly compact.

It remains to show that $\mathbb A$ is weakly closed. By
Proposition~\ref{prop:convexity}, $C$ is convex. Moreover, 
by the linearity of the controlled wealth dynamics~\eqref{dyn:wealth}, we can see that for $\varphi^1, \varphi^2 \in\A$, 
$$
de^{-rt}(W_t^{\varphi^1} - W_t^{\varphi^2}) =  e^{-rt} \left(\varphi_t^1 - \varphi_t^2\right)^\top \left(\mu(t, S_t) dt + \sigma(t, S_t) dB_t\right),
$$
which implies
\begin{align}
\E\left[(W_t^{\varphi^1} - W_t^{\varphi^2})^2\right]
&\leq 2\E \left[\int_0^t  e^{2r(t-u)}\left( t\left|\left(\varphi_u^1 - \varphi_u^2\right)^\top \mu(u, S_u)\right|^2 + \left\|\left(\varphi_u^1 - \varphi_u^2\right)^\top \sigma(u, S_u) \right\|^2 \right)du \right]\notag\\ 
&\leq M\E\left[\int_0^t \left\|\varphi_u^1 - \varphi_u^2\right\|^2 \ du\right],\label{eq:stability}
\end{align}
where $M$ only depends on  $r$,  $T$, the set $A$, and the uniform bounds of $\mu$ and $\sigma$. 
The above stability estimate~\eqref{eq:stability} for the wealth equation, together with the lower
semicontinuity of $\ell$ and Fatou's lemma, implies that $C$ is lower
semicontinuous in the strong topology of
$L^2_{\mathbb F}\times\mathbb R$.
Since a convex strongly lower
semicontinuous functional is weakly lower semicontinuous, $C$ is weakly lower
semicontinuous. Therefore the sublevel set of $C$
\[
\mathbb A=\{(\varphi,\eta)\in\mathcal A\times E:C(\varphi,\eta)\le0\}
\]
is weakly closed. Hence $\mathbb A \subset \mathcal A\times E$ is weakly compact.

Let $(\varphi^n,\eta^n)\subset\mathbb A$ be a minimizing sequence, so that
$
J(\varphi^n)\to p^\star .
$
By weak compactness of $\mathbb A$, there exist a subsequence, still denoted
by $(\varphi^n,\eta^n)$, and a pair
$(\varphi^\star,\eta^\star)\in\mathbb A$ such that
\[
\varphi^n\rightharpoonup\varphi^\star
\quad\text{in }L^2_{\mathbb F},
\qquad
\eta^n\to\eta^\star .
\]

We next verify that $J$ is weakly lower semicontinuous. Suppose first
that $\varphi^n\to\varphi$ strongly in $L^2_{\mathbb F}$. By
\eqref{eq:stability},
\[
W^{\varphi^n}\to W^\varphi
\quad\text{in }L^2([0,T]\times\Omega),
\qquad
W_T^{\varphi^n}\to W_T^\varphi
\quad\text{in }L^2(\Omega).
\]
Passing to a subsequence, the corresponding convergences hold almost
everywhere.

Since
$f(t,w,a)\ge \underline f(t)$ with $\underline f\in L^1(0,T)$,
the function $f-\underline f$ is nonnegative. The lower
semicontinuity of $f$ and Fatou's lemma therefore yield
\[
\mathbb E\left[
\int_0^T
f(t,W_t^\varphi,\varphi_t)\,dt
\right]
\le
\liminf_{n\to\infty}
\mathbb E\left[
\int_0^T
f(t,W_t^{\varphi^n},\varphi_t^n)\,dt
\right].
\]

Moreover, since $g$ is proper, convex, and lower semicontinuous, there exist $a_g,b_g\in\mathbb R$ such that
$g(w)\ge a_gw+b_g$,
for all $w\in\mathbb R$.
Applying Fatou's lemma to the nonnegative function
$g(w)-a_gw-b_g$, and using the $L^1$ convergence of
$W_T^{\varphi^n}$, gives
\[
\mathbb E[g(W_T^\varphi)]
\le
\liminf_{n\to\infty}
\mathbb E[g(W_T^{\varphi^n})].
\]
Thus $J$ is strongly lower semicontinuous.

Since $J$ is convex by Proposition~\ref{prop:convexity},
it is weakly lower semicontinuous. Hence
\[
J(\varphi^\star)
\le
\liminf_{n\to\infty}J(\varphi^n)
=
p^\star .
\]
Since $(\varphi^\star,\eta^\star)\in\mathbb A$ is feasible, the reverse
inequality $p^\star\le J(\varphi^\star)$ follows from the definition of
$p^\star$. Therefore
$
J(\varphi^\star)=p^\star,
$
and $(\varphi^\star,\eta^\star)$ is a primal optimizer.

Finally, Proposition~\ref{prop:eta_compactification} shows that restricting
$\eta$ to $E$ does not change the value of the primal problem. Hence the
same pair is also an optimal solution of the original primal problem over
$\mathcal A\times\mathbb R$.
\end{proof}

Finally, we establish the uniqueness of the optimal control, under strictly convexity assumption.

\begin{assumption}\label{ass:strict_convexity}
The objective functional $J$ is strictly convex on $\mathcal A$; that is, 
for any distinct $\varphi,\psi\in\mathcal A$ satisfying
$J(\varphi)<\infty$ and $J(\psi)<\infty$,
and any $\theta\in(0,1)$,
\[
J\bigl(\theta\varphi+(1-\theta)\psi\bigr)
<
\theta J(\varphi)+(1-\theta)J(\psi).
\]
\end{assumption}

In the following, we provide some explicit conditions on the problem parameters that can guarantee the above the strict convexity of $J$.

\begin{proposition}\label{prop:strict_convexity}
Suppose that Assumptions~\ref{ass:market},
\ref{ass:action_set}, and \ref{ass:regularity} hold.
Then the objective functional $J$ is strictly convex on $\mathcal A$, if one of the following conditions hold:
\begin{enumerate}
    \item \label{ass:strict_convex_f} For almost every $t\in[0,T]$, the function
$(w,a)\mapsto f(t,w,a)$ is strictly convex on its effective domain, 
in the sense that, for every $\theta\in(0,1)$,
\[
\begin{aligned}
&f\bigl(
t,\theta w+(1-\theta)w',
\theta a+(1-\theta)a'
\bigr)<
\theta f(t,w,a)
+(1-\theta)f(t,w',a')
\end{aligned}
\]
whenever $(w,a)\neq (w',a')$ and both
$f(t,w,a)$ and $f(t,w',a')$ are finite.

\item \label{ass:strict_convex_g} 
There exists $\underline\nu>0$ such that
$\sigma(t,s)\sigma(t,s)^\top
\succeq
\underline\nu I_m$, for all $(t,s)\in[0,T]\times\mathbb R^m$, and the function $g$ is strictly convex on its effective domain. More precisely, for every $\theta\in(0,1)$,
\[
g\bigl(\theta w+(1-\theta)w'\bigr)
<
\theta g(w)+(1-\theta)g(w')
\]
whenever $w\neq w'$ and both
$g(w)$ and $g(w')$ are finite.
\end{enumerate}
\end{proposition}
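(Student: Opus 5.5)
Fix distinct $\varphi,\psi\in\mathcal A$ with $J(\varphi),J(\psi)<\infty$ and $\theta\in(0,1)$. Set $\varphi^\theta:=\theta\varphi+(1-\theta)\psi$. As in the proof of Proposition~\ref{prop:convexity}, $W^{\varphi^\theta}=\theta W^\varphi+(1-\theta)W^\psi$, so convexity of $f$ and $g$ gives pointwise inequalities for the integrands. Since $J(\varphi),J(\psi)<\infty$, and $f\ge\underline f$, $g\ge a_gw+b_g$, both integrands are integrable. Hence
\[
\theta J(\varphi)+(1-\theta)J(\psi)-J(\varphi^\theta)=\E\Bigl[\int_0^T\Delta^f_t\,dt+\Delta^g\Bigr],
\]
where $\Delta^f_t\ge0$ and $\Delta^g\ge0$ are the pointwise convexity gaps of $f$ and $g$. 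Strict convexity of $J$ therefore amounts to showing that one of these nonnegative gaps is strictly positive on a set of positive measure.

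\emph{Case \ref{ass:strict_convex_f}.} The set $N:=\{(t,\omega):\varphi_t\ne\psi_t\}$ has positive $dt\otimes d\P$-measure, since $\varphi\neq\psi$ in $L^2_{\mathbb F}$. Finiteness of $J$ forces $f(t,W^\varphi_t,\varphi_t)$ and $f(t,W^\psi_t,\psi_t)$ to be finite a.e. On $N$ the pairs $(W^\varphi_t,\varphi_t)$ and $(W^\psi_t,\psi_t)$ differ, so strict convexity on the effective domain gives $\Delta^f>0$ a.e.\ on $N$. Integrating a function that is nonnegative everywhere and strictly positive on a set of positive measure yields a strictly positive result, which proves strict inequality. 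Measurability of $N$ and of $\Delta^f$ follows from joint Borel measurability of $f$.

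\emph{Case \ref{ass:strict_convex_g}.} Here it suffices to show $\P(W_T^\varphi\neq W_T^\psi)>0$. On that event both $g$-values are finite a.s.\ (because $J<\infty$ and the running part is bounded below), and strict convexity of $g$ gives $\Delta^g>0$ there. This is the main step. Set $\delta:=\varphi-\psi$, which is nonzero in $L^2_{\mathbb F}$. As in the derivation of \eqref{eq:stability},
\[
e^{-rT}(W_T^\varphi-W_T^\psi)=\int_0^Te^{-rt}\delta_t^\top\mu(t,S_t)\,dt+\int_0^Te^{-rt}\delta_t^\top\sigma(t,S_t)\,dB_t=:D.
\]
Suppose, for contradiction, that $D=0$ a.s. Consider the martingale $M_t:=\E[D\mid\F_t]$. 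Since $D=0$, $M\equiv0$. Alternatively, write $D_t$ for the same expression with upper limit $t$. Then $D_t=-\E\bigl[\int_t^Te^{-rs}\delta_s^\top\mu\,ds\mid\F_t\bigr]$, which is a continuous process of the form ``martingale plus finite variation''. Its martingale part is $\int_0^te^{-rs}\delta_s^\top\sigma\,dB_s$, so this continuous semimartingale has quadratic variation $\int_0^T e^{-2rt}\delta_t^\top\sigma\sigma^\top\delta_t\,dt\ge e^{-2rT}\underline\nu\int_0^T\|\delta_t\|^2dt$. To make the contradiction clean, I would argue directly instead: $D=0$ means $\int_0^Te^{-rt}\delta_t^\top\sigma\,dB_t=-\int_0^Te^{-rt}\delta_t^\top\mu\,dt$. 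Taking the conditional expectation at time $t$ shows that the martingale $N_t:=\int_0^te^{-rs}\delta_s^\top\sigma\,dB_s$ equals $-\E[\int_0^Te^{-rs}\delta_s^\top\mu\,ds\mid\F_t]$. A nonconstant martingale can nevertheless be of this form, so this route does not immediately work. The fallback I would use is a Girsanov change of measure. Since $\mu$ is bounded and $\sigma\sigma^\top\succeq\underline\nu I_m$, the market price of risk $\lambda:=\sigma^\top(\sigma\sigma^\top)^{-1}\mu$ is bounded, so Novikov's condition holds and there is an equivalent measure $\mathbb Q$ under which $\tilde B:=B+\int\lambda\,dt$ is a Brownian motion. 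Under $\mathbb Q$, $D=\int_0^Te^{-rt}\delta_t^\top\sigma\,d\tilde B_t$ is a true square-integrable martingale terminal value, because $\delta$ is bounded ($A$ is compact). Its $\mathbb Q$-variance is $\E^{\mathbb Q}\int_0^Te^{-2rt}\delta_t^\top\sigma\sigma^\top\delta_t\,dt\ge e^{-2rT}\underline\nu\,\E^{\mathbb Q}\int_0^T\|\delta_t\|^2dt>0$, where the last inequality uses $\mathbb Q\sim\P$. So $D\neq0$ with positive $\mathbb Q$-probability, hence with positive $\P$-probability. This gives $\Delta^g>0$ on a set of positive probability and completes the argument.
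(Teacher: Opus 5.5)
Your proof is correct. Under condition (i) it is the paper's argument. You are in fact slightly more careful: you note that $J<\infty$, together with the minorants $\underline f$ and $a_gw+b_g$, makes every convexity gap integrable and forces the $f$- and $g$-values to be finite a.e. That is exactly what is needed to invoke strict convexity on the effective domain.

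Under condition (ii) you reach the key fact $\P(W_T^\varphi\neq W_T^\psi)>0$ by a genuinely different route. The paper works entirely under $\P$:
\begin{itemize}
\item it applies It\^o's formula to $|X_t|^2$ with $X=W^\varphi-W^\psi$;
\item it absorbs the drift cross term $2X_tu_t^\top\mu$ by Young's inequality;
\item it integrates the resulting differential inequality to get the quantitative coercivity estimate \eqref{eq:lower_bound_wealth}, namely $\E|W_T^\varphi-W_T^\psi|^2\ge\frac{\underline\nu}{2}e^{-KT}\|\varphi-\psi\|^2_{L^2_{\mathbb F}}$.
\end{itemize}
You instead remove the drift by Girsanov. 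Novikov's condition holds because $|\sigma^\top(\sigma\sigma^\top)^{-1}\mu|^2\le|\mu|^2/\underline\nu$. The discounted wealth difference then becomes a square-integrable $\mathbb Q$-martingale. By the It\^o isometry its second moment is at least $e^{-2rT}\underline\nu\,\E^{\mathbb Q}\int_0^T\|\delta_t\|^2dt$. Equivalence of $\mathbb Q$ and $\P$ transfers the positivity of this integral from $\P$ to $\mathbb Q$, and transfers the conclusion $D\neq0$ from $\mathbb Q$ back to $\P$.

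Your route is cleaner and more conceptual, with no Young/Gr\"onwall bookkeeping, but it is only qualitative. Because the Girsanov density is neither bounded above nor bounded away from zero, the $\mathbb Q$-isometry does not directly give a $\P$-bound of the form \eqref{eq:lower_bound_wealth}. The paper reuses exactly that bound in Proposition~\ref{prop:strong_convexity} to transfer strong convexity of $g$ to $J$.

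For a final write-up, delete the abandoned conditional-expectation and quadratic-variation detours and present only the Girsanov argument.
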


Condition~\ref{ass:strict_convex_g} covers several standard expected-utility
objectives. In particular, when \(f\equiv0\), it applies to the negative
exponential (CARA) utility
\[
g(w)=e^{-\gamma w}, \qquad \gamma>0,
\]
as well as to the lower-semicontinuous extended-valued negative CRRA
cost.
For $\gamma>0$, define
\[
U_\gamma(w)=
\begin{cases}
\dfrac{w^{1-\gamma}-1}{1-\gamma}, & \gamma\neq1,\\[1ex]
\log w, & \gamma=1,
\end{cases}
\qquad w>0,
\]
and define the negative CRRA cost $g_\gamma$ as the
lower-semicontinuous extension of $-U_\gamma$ to $\mathbb R$, namely
\[
g_\gamma(w)
=
\begin{cases}
-U_\gamma(w), & w>0,\\
\lim_{x\downarrow0}-U_\gamma(x), & w=0,\\
+\infty, & w<0.
\end{cases}
\]
For every $\gamma>0$, $g_\gamma$ is proper, convex, lower
semicontinuous, and strictly convex on its effective domain.
The limiting case at \(\gamma=1\) is the negative logarithmic utility
\(g(w)=-\log w\).

\begin{proof}
Let $\varphi,\psi\in\mathcal A$ be distinct in $L^2_{\mathbb F}$ and satisfy
$J(\varphi),J(\psi)<\infty$. Let
$\theta\in(0,1)$, and define
$\varphi^\theta
:=
\theta\varphi+(1-\theta)\psi$.
Since $\mathcal A$ is convex, $\varphi^\theta\in\mathcal A$. Moreover, the
wealth equation is affine in the control, so
\[
W^{\varphi^\theta}
=
\theta W^\varphi+(1-\theta)W^\psi.
\]

Since $\varphi\neq\psi$ in $L^2_{\mathbb F}$, the set
$\left\{
(t,\omega):
\varphi_t(\omega)\neq\psi_t(\omega)
\right\}$
has positive $dt\otimes d\mathbb P$ measure. On this set,
$(W_t^\varphi,\varphi_t)
\neq
(W_t^\psi,\psi_t)$.
If Condition \eqref{ass:strict_convex_f} holds, then  
\begin{align*}
&\mathbb E\int_0^T
f\bigl(t,W_t^{\varphi^\theta},\varphi_t^\theta\bigr)\,dt<
\theta\mathbb E\int_0^T
f(t,W_t^\varphi,\varphi_t)\,dt
+
(1-\theta)\mathbb E\int_0^T
f(t,W_t^\psi,\psi_t)\,dt.
\end{align*}
By the convexity of $g$,
\[
\mathbb E\bigl[g(W_T^{\varphi^\theta})\bigr]
\le
\theta\mathbb E\bigl[g(W_T^\varphi)\bigr]
+
(1-\theta)\mathbb E\bigl[g(W_T^\psi)\bigr].
\]
Combining the two inequalities gives
$J(\varphi^\theta)
<
\theta J(\varphi)+(1-\theta)J(\psi)$.

If Condition \ref{ass:strict_convex_g} holds, 
let
$\varphi,\psi\in\mathcal A$ be distinct in
$L^2_{\mathbb F}$ and satisfy
$J(\varphi),J(\psi)<\infty$. Define
$u_t:=\varphi_t-\psi_t$,
$X_t:=W_t^\varphi-W_t^\psi$.
Since the endowment terms cancel, the wealth difference satisfies
\[
dX_t
=
\left(rX_t+u_t^\top\mu(t,S_t)\right)dt
+
u_t^\top\sigma(t,S_t)\,dB_t,
\qquad X_0=0.
\]
Let
$\overline\mu
:=
\sup_{(t,s)\in[0,T]\times\mathbb R^m}
|\mu(t,s)|<\infty$.
Applying It\^o's formula to $|X_t|^2$ and taking expectations gives
\[
\frac{d}{dt}\mathbb E[|X_t|^2]
=
2r\mathbb E[|X_t|^2]
+
2\mathbb E\!\left[
X_tu_t^\top\mu(t,S_t)
\right]
+
\mathbb E\!\left[
u_t^\top
\sigma(t,S_t)\sigma(t,S_t)^\top
u_t
\right].
\]
By Young's inequality,
\[
2X_tu_t^\top\mu(t,S_t)
\ge
-\frac{\underline\nu}{2}|u_t|^2
-
\frac{2\overline\mu^2}{\underline\nu}|X_t|^2.
\]
Together with
$u_t^\top
\sigma(t,S_t)\sigma(t,S_t)^\top
u_t
\ge
\underline\nu |u_t|^2$,
and $r\ge0$, this yields
\[
\frac{d}{dt}\mathbb E[|X_t|^2]
\ge
-K\mathbb E[|X_t|^2]
+
\frac{\underline\nu}{2}\mathbb E[|u_t|^2],
\qquad
K:=\frac{2\overline\mu^2}{\underline\nu}.
\]
Since $X_0=0$, multiplying by $e^{Kt}$ and integrating gives
\begin{equation}\label{eq:lower_bound_wealth}
    \mathbb E\!\left[
|W_T^\varphi-W_T^\psi|^2
\right]
\ge
\frac{\underline\nu}{2}
\int_0^T
e^{-K(T-t)}
\mathbb E[|\varphi_t-\psi_t|^2]\,dt \ge
\frac{\underline\nu}{2}e^{-KT}
\|\varphi-\psi\|_{L^2_{\mathbb F}}^2.
\end{equation}
Since $\varphi\neq\psi$ in $L^2_{\mathbb F}$, it follows that
$\mathbb P\!\left(
W_T^\varphi\neq W_T^\psi
\right)>0$.
Hence, by the strict convexity of $g$,
\[
\mathbb E\bigl[g(W_T^{\varphi^\theta})\bigr]
<
\theta\mathbb E\bigl[g(W_T^\varphi)\bigr]
+
(1-\theta)\mathbb E\bigl[g(W_T^\psi)\bigr].
\]
By the convexity of $f$,
\begin{align*}
&\mathbb E\int_0^T
f\bigl(t,W_t^{\varphi^\theta},\varphi_t^\theta\bigr)\,dt\le 
\theta\mathbb E\int_0^T
f(t,W_t^\varphi,\varphi_t)\,dt
+
(1-\theta)\mathbb E\int_0^T
f(t,W_t^\psi,\psi_t)\,dt.
\end{align*}
Combining the two inequalities also implies 
$J(\varphi^\theta)
<
\theta J(\varphi)+(1-\theta)J(\psi)$.
Thus, $J$ is strictly convex on $\mathcal A$. 
\end{proof}

\begin{proposition}[Uniqueness of the optimal control]
\label{prop:unique_control}
Suppose that Assumptions~\ref{ass:market},
\ref{ass:action_set}, \ref{ass:regularity},
\ref{ass:feasibility}, and~\ref{ass:strict_convexity} hold.
Then 
the primal optimal control $\varphi^\star$ is unique.
\end{proposition}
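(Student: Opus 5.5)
The argument is the standard strict-convexity one. Existence of a primal optimizer comes from Proposition~\ref{prop:primal_existence}, so the only task is to rule out a second optimal control.

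Suppose $(\varphi^1,\eta^1)$ and $(\varphi^2,\eta^2)$ are both optimal for \eqref{eq:primal_problem}, with $\varphi^1\neq\varphi^2$ in $L^2_{\mathbb F}$. By Proposition~\ref{prop:primal_existence} and Remark~\ref{rmk:J_lower_bound}, $p^\star$ is finite: it is bounded below by $J_{\rm low}$ and above by the finite cost of the feasible strategy from Assumption~\ref{ass:feasibility}. Hence $J(\varphi^1)=J(\varphi^2)=p^\star<\infty$.

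Now fix $\theta=\tfrac12$ and form the midpoint pair
\[
(\varphi^{1/2},\eta^{1/2}):=\left(\tfrac12(\varphi^1+\varphi^2),\tfrac12(\eta^1+\eta^2)\right).
\]
This pair is admissible because $\mathcal A$ is convex (Assumption~\ref{ass:action_set}). It is also feasible: Proposition~\ref{prop:convexity} says $C$ is jointly convex, so
\[
C(\varphi^{1/2},\eta^{1/2})\le \tfrac12 C(\varphi^1,\eta^1)+\tfrac12 C(\varphi^2,\eta^2)\le 0.
\]
Since both costs are finite and the controls are distinct, Assumption~\ref{ass:strict_convexity} applies and gives
\[
J(\varphi^{1/2})<\tfrac12 J(\varphi^1)+\tfrac12 J(\varphi^2)=p^\star.
\]
This contradicts the definition of $p^\star$ as the infimum over feasible pairs, so $\varphi^1=\varphi^2$ in $L^2_{\mathbb F}$, that is, $dt\otimes d\mathbb P$-a.e.

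There is no substantive obstacle; the one step needing care is confirming that both optimal values are finite before invoking strict convexity, which is why the finiteness of $p^\star$ is checked first. The conclusion concerns the control only. The threshold $\eta^\star$ need not be unique, because the $\alpha$-quantile of $\ell(W_T^{\varphi^\star})$ need not be, and the argument requires no uniqueness for $\eta$.
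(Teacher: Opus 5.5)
Your proof is correct and follows essentially the same route as the paper. Both take a convex combination of two optimizers, use the joint convexity of $C$ to get feasibility, and derive a contradiction from the strict convexity of $J$. Your explicit check that $p^\star$ is finite, so that $J(\varphi^1)=J(\varphi^2)<\infty$ and Assumption~\ref{ass:strict_convexity} genuinely applies, fills in a step the paper leaves implicit.
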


\begin{proof}
Let $(\varphi^1,\eta^1)$ and $(\varphi^2,\eta^2)$ be two primal optimizers. For $\theta\in(0,1)$, define 
\[
\varphi^\theta:=\theta\varphi^1+(1-\theta)\varphi^2, \qquad \eta^\theta:=\theta\eta^1+(1-\theta)\eta^2.
\]
By convexity of the feasible set, $(\varphi^\theta,\eta^\theta)$ is feasible. If $\varphi^1\ne\varphi^2$, by strict convexity of $J$, 
\[
J(\varphi^\theta) < \theta J(\varphi^1)+(1-\theta)J(\varphi^2) = p^\star. 
\]
This contradicts the optimality of $(\varphi^1,\eta^1)$ and $(\varphi^2,\eta^2)$. Therefore $\varphi^1=\varphi^2$. 
\end{proof}

\begin{remark}[Uniqueness of the CVaR threshold]
The auxiliary variable $\eta^\star$ is not unique in general, since the
objective $J$ does not depend on $\eta$. For a fixed optimal control
$\varphi^\star$, the set of admissible thresholds is
\[
\left\{
\eta\in E:
H_{\varphi^\star}(\eta)\le c
\right\},
\qquad
H_{\varphi^\star}(\eta)
:=
\eta+\frac{1}{1-\alpha}
\E\big[(\ell(W_T^{\varphi^\star})-\eta)^+\big].
\]
Thus $\eta^\star$ is unique only if this set is a singleton. In particular,
if the CVaR constraint is active and $c=
\operatorname{CVaR}_\alpha(\ell(W_T^{\varphi^\star}))$, then
$\eta^\star$ must be a minimizer of $H_{\varphi^\star}$. In this case,
uniqueness of $\eta^\star$ is equivalent to uniqueness of the
$\alpha$-quantile of the terminal loss
\[
X^\star:=\ell(W_T^{\varphi^\star}).
\]
A sufficient condition is that the distribution function of $X^\star$ is
strictly increasing in a neighborhood of its $\alpha$-quantile. If the
constraint is slack, or if the quantile set contains an interval, then
$\eta^\star$ need not be unique.
\end{remark}

\subsection{Dual formulation}\label{ssec:dual}
We introduce the Lagrangian dual problem associated with the compactified primal problem \eqref{eq:compact_primal_problem}. 

For $(\varphi,\eta)\in\mathcal A\times E$ and $\lambda\ge 0$, define the Lagrangian 
\begin{align*} 
\mathcal L(\varphi,\eta,\lambda) :&= J(\varphi)+\lambda C(\varphi,\eta) 
\\
&=J(\varphi) + \lambda\left[ \eta+ \frac{1}{1-\alpha} \E\big[(\ell(W_T^\varphi)-\eta)^+\big] -c \right].
\end{align*}
The corresponding dual function is 
\begin{equation}\label{eq:dual_function}
    q(\lambda) := \inf_{\varphi\in\mathcal A,\ \eta\in E} \mathcal L(\varphi,\eta,\lambda), \qquad \lambda\ge 0, 
\end{equation}
and the dual problem is 
\[ 
d^\star := \sup_{\lambda\ge 0}q(\lambda). \tag{D} \label{eq:dual_problem} 
\]
More explicitly,
\[
d^*=\sup_{\lambda\ge 0}\inf_{\varphi\in\mathcal A,\ \eta\in E}\left\{
J(\varphi)+\lambda \left(\eta+\frac{1}{1-\alpha}\mathbb E\left[
\left(\ell(W_T^\varphi)-\eta\right)^+
\right]-c\right)
\right\}.
\]

\begin{remark} By Proposition~\ref{prop:eta_compactification}, restricting $\eta$ to $E$ does not change the primal problem. It also does not change the dual formulation. Indeed, for fixed $\varphi$ and $\lambda>0$, minimizing $\mathcal L(\varphi,\eta,\lambda)$ over $\eta$ is equivalent to minimizing 
\[ \eta+ \frac{1}{1-\alpha} \E\big[(\ell(W_T^\varphi)-\eta)^+\big] \] 
over $\eta$. Proposition~\ref{prop:eta_compactification} shows that this minimum is attained in $E$. When $\lambda=0$, the Lagrangian is independent of $\eta$. Hence 
\[ 
\inf_{\varphi\in\mathcal A,\ \eta\in\mathbb R} \mathcal L(\varphi,\eta,\lambda) = \inf_{\varphi\in\mathcal A,\ \eta\in E} \mathcal L(\varphi,\eta,\lambda), \qquad \lambda\ge0. 
\] 
\end{remark}

We now establish the property of the dual formulation \eqref{eq:dual_problem} and its connection with the primal formulation \eqref{eq:compact_primal_problem}.

\begin{proposition}[Existence of Lagrange minimizers]
\label{prop:lagrangian_minimizer_existence}
Under Assumptions~\ref{ass:market}, \ref{ass:action_set}, 
\ref{ass:regularity}, and \ref{ass:feasibility}, for every $\lambda\ge0$, the inner problem of \eqref{eq:dual_problem},
$
\inf_{\varphi\in\mathcal A,\ \eta\in E}
\mathcal L(\varphi,\eta,\lambda)
$,
admits a minimizer. 

Suppose, in addition, that Assumption~\ref{ass:strict_convexity} holds.
Then, for every $\lambda\ge0$, the Lagrangian is strictly convex in its
control component on its effective domain: for any
$(\varphi^1,\eta^1),(\varphi^2,\eta^2)\in\mathcal A\times E$ with
$\varphi^1\neq\varphi^2$ in $L^2_{\mathbb F}$ and $J(\varphi^1), J(\varphi^2)<\infty$, and any
$\theta\in(0,1)$,
\[
\begin{aligned}
&\mathcal L\bigl(
\theta\varphi^1+(1-\theta)\varphi^2,
\theta\eta^1+(1-\theta)\eta^2,
\lambda
\bigr)
<
\theta\mathcal L(\varphi^1,\eta^1,\lambda)
+
(1-\theta)\mathcal L(\varphi^2,\eta^2,\lambda).
\end{aligned}
\]
Consequently, all Lagrangian minimizers at a fixed $\lambda$ have the
same control component.
\end{proposition}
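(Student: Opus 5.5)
The plan is to reuse the direct method from Proposition~\ref{prop:primal_existence}, this time applied to the Lagrangian on the whole product $\mathcal A\times E$ instead of on the constrained set. Fix $\lambda\ge0$.

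\textbf{Finite infimum.} First I will check that the infimum is finite.
\begin{itemize}
\item For the upper bound, Assumption~\ref{ass:feasibility} supplies $\bar\varphi$ with $J(\bar\varphi)<\infty$. Since $C$ is finite-valued (Proposition~\ref{prop:eta_compactification}, Remark~\ref{rmk:loss_integrable}), $\mathcal L(\bar\varphi,\eta,\lambda)<\infty$ for any $\eta\in E$.
\item For the lower bound, use $J\ge J_{\rm low}$ from Remark~\ref{rmk:J_lower_bound}. For $\eta\in E$ we have $C(\varphi,\eta)\ge \eta-c\ge -M_X/\sqrt\alpha-c$, because $(\cdot)^+\ge0$.
\end{itemize}
Hence $q(\lambda)$ is finite, and a minimizing sequence $(\varphi^n,\eta^n)\subset\mathcal A\times E$ exists.

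\textbf{Existence.} The proof of Proposition~\ref{prop:primal_existence} shows that $\mathcal A\times E$ is weakly compact in $L^2_{\mathbb F}\times\mathbb R$. So after passing to a subsequence, $\varphi^n\rightharpoonup\varphi^\star$ and $\eta^n\to\eta^\star\in E$.

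That same proof shows $J$ and $C$ are convex and strongly lower semicontinuous, hence weakly lower semicontinuous. For $J$ this used Fatou together with the stability estimate \eqref{eq:stability}. For $C$ it used lower semicontinuity of $\ell$ and Fatou, with joint continuity in $\eta$ handled through $|(x-\eta)^+-(x-\eta')^+|\le|\eta-\eta'|$.

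Since $\lambda\ge0$, the map $\mathcal L(\cdot,\cdot,\lambda)=J+\lambda C$ is convex and strongly lower semicontinuous on $L^2_{\mathbb F}\times\mathbb R$. By Mazur's lemma it is therefore weakly lower semicontinuous. This gives $\mathcal L(\varphi^\star,\eta^\star,\lambda)\le\liminf_n\mathcal L(\varphi^n,\eta^n,\lambda)=q(\lambda)$, so $(\varphi^\star,\eta^\star)$ is a minimizer.

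\textbf{Main obstacle.} The delicate point is the lower semicontinuity of $C$ under joint strong convergence. The previous proof states this only briefly, so I will spell it out. Along a subsequence, $W_T^{\varphi^n}\to W_T^{\varphi}$ almost surely by \eqref{eq:stability}. By Proposition~\ref{prop:eta_compactification}, the family $\{(\ell(W_T^{\varphi^n})-\eta^n)^+\}$ is bounded in $L^2$, hence uniformly integrable. Because $\ell$ is finite and convex on $\mathbb R$, it is continuous, so in fact $\ell(W_T^{\varphi^n})\to\ell(W_T^\varphi)$ almost surely. Vitali's theorem then gives actual continuity of $C$ along the subsequence, which is more than the lower semicontinuity we need. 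The standard subsequence argument extends this to the full $\liminf$.

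\textbf{Strict convexity and uniqueness of the control.}
\begin{itemize}
\item Take $\varphi^1\ne\varphi^2$ with finite $J$, and let $\theta\in(0,1)$. Assumption~\ref{ass:strict_convexity} gives strict inequality for $J$.
\item Proposition~\ref{prop:convexity} gives the non-strict inequality for $C$ jointly in $(\varphi,\eta)$. Since $\lambda\ge0$ and $C$ is finite, adding $\lambda$ times it preserves the strict inequality, which yields the displayed strict inequality for $\mathcal L$.
\item Let $(\varphi^1,\eta^1)$ and $(\varphi^2,\eta^2)$ both be minimizers. Both have finite $\mathcal L$, hence finite $J$. If $\varphi^1\ne\varphi^2$, the midpoint (which lies in $\mathcal A\times E$ by convexity) would give a value strictly below $q(\lambda)$. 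This is a contradiction, so $\varphi^1=\varphi^2$.
\end{itemize}
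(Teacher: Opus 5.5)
Your proposal is correct and follows the same route as the paper: weak compactness of $\mathcal A\times E$, weak lower semicontinuity of $J+\lambda C$ (from convexity plus strong lower semicontinuity), and strict convexity of $J$ combined with joint convexity of $C$ to rule out two minimizers with distinct controls. Your additions fill in details the paper leaves implicit without changing the approach: the explicit finiteness check for $q(\lambda)$, and the Vitali argument, which in fact shows $C$ is strongly continuous rather than merely lower semicontinuous.
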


\begin{proof}
For fixed $\lambda\ge0$, the Lagrangian is given by
\[
\mathcal L(\varphi,\eta,\lambda)
=
J(\varphi)+\lambda C(\varphi,\eta).
\]
By the compactification result, the minimization is over
$\mathcal A\times E$. As in the proof of
Proposition~\ref{prop:primal_existence}, $\mathcal A$ is weakly compact and
$E$ is compact. Moreover, $J$ and $C$ are weakly lower semicontinuous, and
therefore $\mathcal L(\cdot,\cdot,\lambda)$ is weakly lower semicontinuous on
$\mathcal A\times E$. Hence the infimum is attained.

If Assumption \ref{ass:strict_convexity} holds, 
$J$ is strictly convex on
$\mathcal A$, while $C$ is jointly convex in $(\varphi,\eta)$. Hence, for
every $\lambda\ge0$, the Lagrangian
\[
\mathcal L(\varphi,\eta,\lambda)
=
J(\varphi)+\lambda C(\varphi,\eta)
\]
is strictly convex in its control component. Therefore, two Lagrangian
minimizers at the same multiplier cannot have different control
components, by an argument similar to the proof of Proposition \ref{prop:unique_control}.
\end{proof}

We next present a weak duality result, which states that the dual value is not larger than the primal value.
\begin{proposition}[Weak duality]\label{prop:weak_duality}
Under Assumptions~\ref{ass:market}, \ref{ass:action_set}, 
\ref{ass:regularity}, and \ref{ass:feasibility}, for every $\lambda\ge 0$, we have $q(\lambda)\le p^*$. Consequently,
$d^*\le p^*$.
\end{proposition}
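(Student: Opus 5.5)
The argument is the standard one for weak duality, run on the compactified primal problem \eqref{eq:compact_primal_problem}. We fix $\lambda\ge0$ and compare $q(\lambda)$ with the objective at an arbitrary feasible point. By Proposition~\ref{prop:eta_compactification}, $p^\star$ equals the infimum of $J(\varphi)$ over pairs $(\varphi,\eta)\in\mathcal A\times E$ with $C(\varphi,\eta)\le0$. By Assumption~\ref{ass:feasibility}, together with the attainment of the Rockafellar--Uryasev infimum inside $E$, this feasible set is nonempty. Hence $p^\star<\infty$, and it suffices to bound $q(\lambda)$ by $J(\varphi)$ at each feasible pair.

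Take any feasible $(\varphi,\eta)\in\mathcal A\times E$. Since $\lambda\ge0$ and $C(\varphi,\eta)\le0$, we have $\lambda C(\varphi,\eta)\le0$. The mapping $C$ is finite-valued by Remark~\ref{rmk:loss_integrable}, and $J(\varphi)>-\infty$ by Remark~\ref{rmk:J_lower_bound}, so the sum $J(\varphi)+\lambda C(\varphi,\eta)$ is well defined in $(-\infty,+\infty]$. This gives
\[
q(\lambda)=\inf_{\psi\in\mathcal A,\ \eta'\in E}\mathcal L(\psi,\eta',\lambda)\le \mathcal L(\varphi,\eta,\lambda)=J(\varphi)+\lambda C(\varphi,\eta)\le J(\varphi).
\]
If $J(\varphi)=+\infty$, the inequality holds trivially.

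Taking the infimum over all feasible pairs yields $q(\lambda)\le p^\star$. Since $\lambda\ge0$ was arbitrary, taking the supremum over $\lambda$ gives $d^\star\le p^\star$. The only point needing care is the extended-valued arithmetic. Because $C$ is finite and $J$ is bounded below, no $\infty-\infty$ expression arises, so the chain of inequalities is legitimate. There is no genuine obstacle beyond this bookkeeping.
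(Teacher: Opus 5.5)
Your proposal is correct and follows the same argument as the paper: for any primal-feasible pair, $\lambda C(\varphi,\eta)\le 0$ gives $q(\lambda)\le\mathcal L(\varphi,\eta,\lambda)\le J(\varphi)$, and one then takes the infimum over feasible pairs and the supremum over $\lambda\ge0$. Two points you make explicit are left implicit in the paper: you restrict to feasible pairs in $\mathcal A\times E$ via Proposition~\ref{prop:eta_compactification}, which matches how $q$ is defined, and you check that no $\infty-\infty$ expression arises.
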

\begin{proof}
Let $(\varphi,\eta)$ be primal feasible. Then $C(\varphi,\eta)\le 0$. Since $\lambda\ge 0$,
\[
\mathcal L(\varphi,\eta,\lambda)
=
J(\varphi)+\lambda C(\varphi,\eta)
\le
J(\varphi).
\]
Taking the infimum over feasible $(\varphi,\eta)$ gives $q(\lambda)\le p^*$. Taking the supremum over $\lambda\ge 0$ gives $d^*\le p^*$.
\end{proof}

To establish strong duality, we require the following Slater's condition. This condition strengthens Assumption \ref{ass:feasibility} to  strict feasibility, which is standard in the optimization literature on guarantee strong duality (see \textit{e.g.}, \cite{boyd2004convex}).

\begin{assumption}[Slater condition]
\label{ass:slater}
There exist $\bar\varphi\in\mathcal A$ and $\delta>0$ such that
\[
J(\bar\varphi)<\infty,
\qquad
\operatorname{CVaR}_{\alpha}
\bigl(\ell(W_T^{\bar\varphi})\bigr)
\le c-\delta.
\]
\end{assumption}

\begin{remark}\label{rmk:eta_bar}
    Under Assumption \ref{ass:slater},  Proposition \ref{prop:eta_compactification} ensures that choosing $\bar \eta$ that minimizes $C(\bar \varphi, \eta)$, guarantees that
    \[
    C(\bar \varphi, \bar \eta)=\operatorname{CVaR}_\alpha(\ell(W_T^{\bar\varphi}))-c\le -\delta.
    \]
\end{remark}

Now we are ready to state the strong duality result.

\begin{theorem}[Strong duality]\label{thm:strong_duality}
Under Assumptions~\ref{ass:market}, \ref{ass:action_set}, \ref{ass:regularity}, and~\ref{ass:slater},
the primal problem and dual problem have the same optimal value:
$p^*=d^*$.
Moreover, the dual problem admits an optimal multiplier $\lambda^*\ge 0$. 
\end{theorem}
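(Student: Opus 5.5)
Strong duality follows from the classical separating-hyperplane argument for convex programs with a single inequality constraint, applied to the compactified problem \eqref{eq:compact_primal_problem}. Weak duality (Proposition~\ref{prop:weak_duality}) already gives $d^\star\le p^\star$, so only $d^\star\ge p^\star$ and attainment of the dual remain. Assumption~\ref{ass:slater} implies Assumption~\ref{ass:feasibility}, so Proposition~\ref{prop:primal_existence} applies. Moreover, $p^\star\ge J_{\rm low}>-\infty$ by Remark~\ref{rmk:J_lower_bound}, and $p^\star\le J(\bar\varphi)<\infty$. Hence $p^\star$ is finite.

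\textbf{Step 1: the value set.} I would introduce the set
\[
\mathcal V:=\bigl\{(u,v)\in\mathbb R^2:\ \exists(\varphi,\eta)\in\mathcal A\times E \text{ with } J(\varphi)\le v,\ C(\varphi,\eta)\le u\bigr\}.
\]
By Proposition~\ref{prop:convexity}, $J$ is convex, $C$ is jointly convex, and $\mathcal A\times E$ is convex, so $\mathcal V$ is convex. Indeed, a convex combination of two witnesses is again a witness. The point $(0,p^\star-\epsilon)$ does not lie in $\mathcal V$ for any $\epsilon>0$, by the definition of $p^\star$. I would therefore separate the convex set $\mathcal V$ from the convex set $\{(0,v):v<p^\star\}$, which is disjoint from $\mathcal V$. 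In finite dimension ($\mathbb R^2$) no interiority assumption is needed for this. The separation yields $(\lambda_0,\lambda)\neq(0,0)$ and $\gamma$ such that $\lambda u+\lambda_0 v\ge \gamma\ge\lambda_0 v'$ for all $(u,v)\in\mathcal V$ and all $v'<p^\star$. Since $\mathcal V$ is upward closed in both coordinates, $\lambda,\lambda_0\ge0$. Letting $v'\uparrow p^\star$ gives
\[
\lambda_0 J(\varphi)+\lambda C(\varphi,\eta)\ge\lambda_0p^\star\quad\text{for all }(\varphi,\eta)\in\mathcal A\times E \text{ with } J(\varphi)<\infty.
\]
The inequality extends trivially to $J(\varphi)=\infty$ when $\lambda_0>0$.

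\textbf{Step 2: Slater rules out $\lambda_0=0$.} This is the key step. Suppose $\lambda_0=0$. Then $\lambda>0$ and $\lambda C(\varphi,\eta)\ge0$ for every admissible pair. Evaluate this at the pair $(\bar\varphi,\bar\eta)$ from Remark~\ref{rmk:eta_bar}. This pair lies in $\mathcal A\times E$, because $\bar\eta$ can be taken to be a minimizer in $E$ by Proposition~\ref{prop:eta_compactification}, and it satisfies $C(\bar\varphi,\bar\eta)\le-\delta<0$. Hence $\lambda C(\bar\varphi,\bar\eta)<0$, a contradiction. So $\lambda_0>0$. After normalizing $\lambda_0=1$ and setting $\lambda^\star:=\lambda/\lambda_0\ge0$, we obtain $\mathcal L(\varphi,\eta,\lambda^\star)\ge p^\star$ on $\mathcal A\times E$. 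Therefore $q(\lambda^\star)\ge p^\star\ge d^\star\ge q(\lambda^\star)$, and all these quantities are equal. This gives $p^\star=d^\star$ together with dual attainment at $\lambda^\star$.

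\textbf{Main obstacle.} The difficulty lies in the bookkeeping for the extended-valued objective and the restriction to $E$, rather than in the separation itself. Two points need care. First, $\mathcal V$ must be built only from pairs with $J<\infty$, so that it is a genuine convex subset of $\mathbb R^2$. Second, the Slater point must belong to the compactified domain $\mathcal A\times E$, which is exactly what Remark~\ref{rmk:eta_bar} supplies. I would also verify that finiteness of $p^\star$ is what makes $\{(0,v):v<p^\star\}$ nonempty, and that it is disjoint from $\mathcal V$. As a byproduct, I would record the explicit bound $\lambda^\star\le (J(\bar\varphi)-p^\star)/\delta\le (J(\bar\varphi)-J_{\rm low})/\delta$. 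It follows by evaluating $\mathcal L(\bar\varphi,\bar\eta,\lambda^\star)\ge p^\star$ and using $C(\bar\varphi,\bar\eta)\le-\delta$.
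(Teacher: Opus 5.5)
Your proof is correct, but it obtains the multiplier by a different mechanism from the paper's.

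The paper never separates sets in $\mathbb R^2$. It works with the perturbation function $v(u)=\inf\{J(\varphi):(\varphi,\eta)\in\mathcal A\times E,\ C(\varphi,\eta)\le u\}$. This function is convex and nonincreasing, it is bounded below by $J_{\rm low}$, and by Slater it is finite on $(-\delta,\infty)$, so $\partial v(0)\neq\emptyset$. Taking $\lambda^\star=-\xi$ for some $\xi\in\partial v(0)$ and inserting $u=C(\varphi,\eta)$ into the subgradient inequality gives $J(\varphi)+\lambda^\star C(\varphi,\eta)\ge p^\star$ directly.

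Your set $\mathcal V$ lies between the strict epigraph and the epigraph of this $v$. Your separating line is therefore a supporting line of $\operatorname{epi} v$ at $(0,p^\star)$, and the real difference is where Slater enters.

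\begin{itemize}
\item In the paper, Slater is used before any separation, to place $0$ in the interior of $\operatorname{dom} v$. That makes the supporting line automatically non-vertical and reduces everything to a one-dimensional fact: a finite convex function on an open interval has a subgradient. The sign of $\lambda^\star$ then comes from the monotonicity of $v$.
\item In your argument, you first obtain a Fritz John--type pair $(\lambda_0,\lambda)$ from the general finite-dimensional separation theorem, which needs no interiority, as you note. You get the signs from the upward closedness of $\mathcal V$. Only then do you use the Slater pair $(\bar\varphi,\bar\eta)\in\mathcal A\times E$ to exclude $\lambda_0=0$.
\end{itemize}

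The paper's route is shorter and more elementary. Yours makes the role of Slater as a constraint qualification explicit, and it handles $J=+\infty$ cleanly, because $\mathcal V\subset\mathbb R^2$ only records finite-cost witnesses.

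Your byproduct bound $\lambda^\star\le (J(\bar\varphi)-J_{\rm low})/\delta$ is exactly the argument the paper gives separately in Proposition~\ref{prop:bound_lambda}.

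One minor precision: in Step 2, the inequality $\lambda C(\varphi,\eta)\ge 0$ is only established for pairs with $J(\varphi)<\infty$, not for every admissible pair. The Slater pair satisfies $J(\bar\varphi)<\infty$, so the contradiction still goes through.
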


\begin{proof}
Define the perturbation value function
\[
v(u):=
\inf_{\varphi\in\mathcal A,\ \eta\in E}\left\{
J(\varphi): C(\varphi,\eta)\le u
\right\}.
\]
Then $v(0)=p^*$. Since $J$ and $C$ are convex, $v$ is convex. Assumption \ref{ass:slater} implies $v(u)<+\infty$ for all $u>-\delta$ because $(\bar\varphi,\bar\eta)$ is feasible for such $u$. By the discussion in Remark \ref{rmk:J_lower_bound}, $v(u)>-\infty$. Hence $v$ is finite on an open interval containing $0$. As a finite
convex function on an open interval, $v$ is continuous at $0$ and
$\partial v(0)\neq\emptyset$.

Let $\xi\in\partial v(0)$. Since relaxing the constraint can only decrease the value, $v$ is nonincreasing, and therefore $\xi\le 0$. Define $\lambda^*:=-\xi\ge 0$. The subgradient inequality gives, for all $u$,
\[
v(u)\ge v(0)-\lambda^* u.
\]
For any $(\varphi,\eta)\in\mathcal A\times E$, taking $u=C(\varphi,\eta)$ yields
\[
J(\varphi)
\ge
v(C(\varphi,\eta))
\ge
p^*-\lambda^* C(\varphi,\eta),
\]
where the first inequality holds because $(\varphi,\eta)$ is feasible for the perturbed constraint level $u=C(\varphi,\eta)$. Therefore
\[
J(\varphi)+\lambda^* C(\varphi,\eta)\ge p^*
\]
for all $(\varphi,\eta)$. Taking the infimum over $(\varphi,\eta)\in\mathcal A\times E$ gives $q(\lambda^*)\ge p^*$. By Proposition \ref{prop:weak_duality}, $q(\lambda^*)\le p^*$. Hence $q(\lambda^*)=p^*$, so $d^*=p^*$ and $\lambda^*$ is a dual optimizer. 
\end{proof}

Similar to Proposition \ref{prop:eta_compactification}, we provide a bound for the multiplier $\lambda$, which is useful in the numerical algorithm. 
\begin{proposition}\label{prop:bound_lambda}
Under Assumptions \ref{ass:market},\ref{ass:action_set}, \ref{ass:regularity} and \ref{ass:slater},
let $\bar \varphi\in \mathcal A$ be chosen as in Assumption \ref{ass:slater}, and let $J_{\rm low}$ be a lower bound of $J$ on $\mathcal A$ from~Remark \ref{rmk:J_lower_bound}.  
Define
\[
\Lambda:=\frac{J(\bar\varphi)-J_{\rm low}}{\delta} \in [0, \infty).
\] Then every dual optimizer $\lambda^*$ satisfies
$0\le \lambda^*\le \Lambda$.
\end{proposition}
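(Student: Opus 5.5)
The plan is to evaluate the dual function at the Slater pair and compare it with the primal value, using strong duality from Theorem~\ref{thm:strong_duality}. Let $\lambda^*\ge0$ be any dual optimizer. By Theorem~\ref{thm:strong_duality}, $q(\lambda^*)=d^*=p^*$. Moreover, $p^*\ge J_{\rm low}$: by Proposition~\ref{prop:primal_existence}, or simply because every feasible strategy lies in $\mathcal A$, we have $p^*\ge\inf_{\mathcal A}J\ge J_{\rm low}$ by Remark~\ref{rmk:J_lower_bound}.

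Next, take $\bar\eta\in E$ as in Remark~\ref{rmk:eta_bar}, so that $C(\bar\varphi,\bar\eta)\le-\delta$. Since $(\bar\varphi,\bar\eta)\in\mathcal A\times E$ is admissible in the infimum defining $q$,
\[
J_{\rm low}\le p^*=q(\lambda^*)\le \mathcal L(\bar\varphi,\bar\eta,\lambda^*)=J(\bar\varphi)+\lambda^*C(\bar\varphi,\bar\eta)\le J(\bar\varphi)-\lambda^*\delta .
\]
Here we use $\lambda^*\ge0$ in the last step. Rearranging, and using $J(\bar\varphi)<\infty$ and $\delta>0$, gives $\lambda^*\delta\le J(\bar\varphi)-J_{\rm low}$, that is, $\lambda^*\le\Lambda$. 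The lower bound $\lambda^*\ge0$ holds by definition of the dual feasible set. The same chain also shows $J(\bar\varphi)\ge J_{\rm low}$, so $\Lambda\in[0,\infty)$ is well defined and finite.

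The only points needing care are bookkeeping ones. First, $\bar\eta$ must lie in $E$ so that it is admissible in $q$; this is guaranteed by Proposition~\ref{prop:eta_compactification}, since minimizers of the Rockafellar--Uryasev function lie in $E$. Second, the argument uses $q(\lambda^*)=p^*$, which requires strong duality rather than only weak duality; this is supplied by Theorem~\ref{thm:strong_duality}. Beyond these, I expect no genuine obstacle.
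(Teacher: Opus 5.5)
Your proposal is correct and follows essentially the same route as the paper: strong duality gives $J_{\rm low}\le p^*=q(\lambda^*)$, and evaluating the Lagrangian at the Slater pair $(\bar\varphi,\bar\eta)$ from Remark~\ref{rmk:eta_bar} gives $q(\lambda^*)\le J(\bar\varphi)-\lambda^*\delta$. Rearranging yields $\lambda^*\le\Lambda$. Your extra bookkeeping remarks ($\bar\eta\in E$, and finiteness and nonnegativity of $\Lambda$) are accurate and fill in details the paper leaves implicit.
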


\begin{proof}
The lower bound $\lambda^*\ge 0$ follows from dual feasibility.
By Theorem \ref{thm:strong_duality},
$q(\lambda^*)=p^*$.
Since $J(\varphi)\ge J_{\rm low}$ for all $\varphi\in\mathcal A$, we have
$p^*\ge J_{\rm low}$. On the other hand, by definition of the dual
function, choosing $\bar \varphi, \bar \eta$ as in Remark \ref{rmk:eta_bar},  
\[
q(\lambda^*)
=
\inf_{\varphi\in\mathcal A,\eta\in E}
\mathcal L(\varphi,\eta,\lambda^*)
\le
\mathcal L(\bar\varphi,\bar\eta,\lambda^*).
\]
Therefore,
\[
q(\lambda^*)
\le
J(\bar\varphi)+\lambda^* C(\bar\varphi,\bar\eta)
\le
J(\bar\varphi)-\lambda^*\delta .
\]
Combining the inequalities gives
\[
J_{\rm low}\le p^*=q(\lambda^*)\le J(\bar\varphi)-\lambda^*\delta .
\]
Hence
\[
\lambda^*\le
\frac{J(\bar\varphi)-J_{\rm low}}{\delta}=\Lambda .
\]
This proves the claim.
\end{proof}

The following corollary explains how a primal optimizer can be recovered once a dual optimizer has been found. It also shows that, under strict convexity, the control component recovered from any Lagrangian minimizer is the unique primal optimal control.

\begin{corollary}[Recovery of a primal optimizer]\label{cor:saddle_optimality} Suppose Assumptions \ref{ass:market},\ref{ass:action_set}, \ref{ass:regularity} and \ref{ass:slater} hold. 
Let $\lambda^\star$ be a dual optimizer. Suppose
\[
(\varphi^\star,\eta^\star)
\in
\arg\min_{\varphi\in\mathcal A,\eta\in E}
\mathcal L(\varphi,\eta,\lambda^\star)
\]
and
\[
C(\varphi^\star,\eta^\star)\le0,
\qquad
\lambda^\star C(\varphi^\star,\eta^\star)=0.
\]
Then $(\varphi^\star,\eta^\star)$ is a primal optimizer.

If in addition Assumption \ref{ass:strict_convexity} also holds,
then any Lagrangian minimizer
\[
(\widehat\varphi,\widehat\eta)
\in
\arg\min_{\varphi\in\mathcal A,\eta\in E}
\mathcal L(\varphi,\eta,\lambda^\star)
\]
recovers the unique primal optimal control.
\end{corollary}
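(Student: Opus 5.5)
For the first claim, I would use the standard saddle-point argument, relying on Theorem~\ref{thm:strong_duality}. Since $(\varphi^\star,\eta^\star)$ minimizes the Lagrangian at $\lambda^\star$ and $\lambda^\star$ is dual optimal, I would write
\[
J(\varphi^\star)=J(\varphi^\star)+\lambda^\star C(\varphi^\star,\eta^\star)=\mathcal L(\varphi^\star,\eta^\star,\lambda^\star)=q(\lambda^\star)=d^\star=p^\star,
\]
where the first equality is complementary slackness and the last is strong duality. The assumption $C(\varphi^\star,\eta^\star)\le0$ makes the pair feasible for the compactified problem $(\mathrm P_E)$. It then attains $p^\star$ and is a primal optimizer, and by Proposition~\ref{prop:eta_compactification} it is also optimal for the original problem. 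Since $p^\star\le J(\bar\varphi)<\infty$, $J(\varphi^\star)$ is finite, which matters for the next part.

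For the second claim, I would avoid assuming that the given $(\widehat\varphi,\widehat\eta)$ satisfies complementary slackness, since that is not part of the hypothesis. Instead I would argue through existence and uniqueness of Lagrangian minimizers. Proposition~\ref{prop:primal_existence} (Assumption~\ref{ass:slater} implies Assumption~\ref{ass:feasibility}) gives a primal optimizer $(\varphi^\circ,\eta^\circ)$. I would then check that it is a Lagrangian minimizer at $\lambda^\star$:
\[
\mathcal L(\varphi^\circ,\eta^\circ,\lambda^\star)=J(\varphi^\circ)+\lambda^\star C(\varphi^\circ,\eta^\circ)\le J(\varphi^\circ)=p^\star=q(\lambda^\star)\le \mathcal L(\varphi^\circ,\eta^\circ,\lambda^\star),
\]
using $\lambda^\star\ge0$ and $C(\varphi^\circ,\eta^\circ)\le0$. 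All terms are therefore equal, so $(\varphi^\circ,\eta^\circ)\in\arg\min\mathcal L(\cdot,\cdot,\lambda^\star)$, and complementary slackness holds as a by-product.

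The remaining step is to apply the strict-convexity part of Proposition~\ref{prop:lagrangian_minimizer_existence}: all Lagrangian minimizers at a fixed multiplier share the same control component, so $\widehat\varphi=\varphi^\circ$. Proposition~\ref{prop:unique_control} says $\varphi^\circ$ is the unique primal optimal control, which completes the argument.

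The main obstacle is making sure the strict-convexity comparison in Proposition~\ref{prop:lagrangian_minimizer_existence} applies, because it requires both controls to have finite $J$. For $\varphi^\circ$ this holds since $J(\varphi^\circ)=p^\star<\infty$. For $\widehat\varphi$ I would argue that $\mathcal L(\widehat\varphi,\widehat\eta,\lambda^\star)=q(\lambda^\star)=p^\star<\infty$ and that $C$ is finite-valued (Remark~\ref{rmk:loss_integrable}), so $J(\widehat\varphi)<\infty$. With both values finite, if $\widehat\varphi\neq\varphi^\circ$ the midpoint would have strictly smaller Lagrangian value than $q(\lambda^\star)$, a contradiction.
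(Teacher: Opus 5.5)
Your proposal is correct and follows essentially the same route as the paper. The first claim comes from complementary slackness plus strong duality. For the second, you show that a primal optimizer from Proposition~\ref{prop:primal_existence} is itself a Lagrangian minimizer at $\lambda^\star$, then invoke the uniqueness of the control component from Proposition~\ref{prop:lagrangian_minimizer_existence}. Your explicit check that $J(\widehat\varphi)<\infty$, using $\mathcal L(\widehat\varphi,\widehat\eta,\lambda^\star)=p^\star$ and the finiteness of $C$, is a detail the paper leaves implicit but which the strict-convexity comparison genuinely needs.
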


\begin{proof}
Since $(\varphi^\star,\eta^\star)$ minimizes the Lagrangian at
$\lambda^\star$,
$
\mathcal L(\varphi^\star,\eta^\star,\lambda^\star)
=
q(\lambda^\star).
$
By strong duality,
$
q(\lambda^\star)=p^\star.
$
Using complementary slackness,
\[
J(\varphi^\star)
=
J(\varphi^\star)+\lambda^\star C(\varphi^\star,\eta^\star)
=
\mathcal L(\varphi^\star,\eta^\star,\lambda^\star)
=
p^\star.
\]
Since $C(\varphi^\star,\eta^\star)\le0$, the pair is primal feasible and
attains the primal value. Hence it is a primal optimizer.

It remains to prove the recovery statement. Let
$(\varphi^{\rm p},\eta^{\rm p})\in\mathcal A\times E$ be a primal optimizer,
whose existence follows from Proposition~\ref{prop:primal_existence}. Since
$(\varphi^{\rm p},\eta^{\rm p})$ is feasible,
\[
\mathcal L(\varphi^{\rm p},\eta^{\rm p},\lambda^\star)
=
J(\varphi^{\rm p})+\lambda^\star C(\varphi^{\rm p},\eta^{\rm p})
\le
J(\varphi^{\rm p})
=
p^\star.
\]
On the other hand, since $\lambda^\star$ is dual optimal, strong duality gives
$q(\lambda^\star)=p^\star$, and by definition of $q$,
\[
q(\lambda^\star)
\le
\mathcal L(\varphi^{\rm p},\eta^{\rm p},\lambda^\star).
\]
Therefore
$\mathcal L(\varphi^{\rm p},\eta^{\rm p},\lambda^\star)
=
q(\lambda^\star)$,
so $(\varphi^{\rm p},\eta^{\rm p})$ is a Lagrangian minimizer at
$\lambda^\star$.

By Proposition~\ref{prop:lagrangian_minimizer_existence}, under the strict
convexity of $J$, the control component of the
Lagrangian minimizer at $\lambda^\star$ is unique. Hence any Lagrangian
minimizer $(\widehat\varphi,\widehat\eta)$ at $\lambda^\star$ satisfies
$\widehat\varphi=\varphi^{\rm p}$.
Thus $\widehat\varphi$ is the unique primal optimal control.
\end{proof}

\section{Algorithm and Analysis}\label{sec:algorithm}
\subsection{Nested Bisection-Golden Search Method}\label{ssec:lambda and eta}
In this section, motivated by the dual formulation in Section \ref{ssec:dual}, we propose a nested bisection method for the portfolio optimization problem with CVaR constraint, by solving the compactified dual problem. By Propositions~\ref{prop:eta_compactification} and~\ref{prop:bound_lambda}, it is enough to solve the dual problem
\[ d^\star = \sup_{\lambda\in[0,\Lambda]}q(\lambda), \qquad q(\lambda) = \inf_{\varphi\in\mathcal A,\eta\in E} \mathcal L(\varphi,\eta,\lambda). \] 
The proposed algorithm consists of a control oracle for fixed
$(\lambda,\eta)$, an inner golden-section search over $\eta$, and an
outer bisection over $\lambda$.

Throughout this section, Assumptions~\ref{ass:market}--\ref{ass:regularity},
\ref{ass:slater}, and~\ref{ass:strict_convexity} are in force.
Assumption~\ref{ass:strict_convexity} ensures that both $J$ and $\mathcal L(\cdot,\eta,\lambda)$ are strictly convex in the control component. Consequently, every fixed-$(\lambda,\eta)$ control problem and every joint inner problem $ q(\lambda)$ have a unique control component.

\paragraph{Control oracle.}

For fixed $(\lambda,\eta)$, define the modified terminal cost
\[
\Psi_{\lambda,\eta}(w)
:=
g(w)+\frac{\lambda}{1-\alpha}(\ell(w)-\eta)^+.
\]
Then the fixed-$(\lambda,\eta)$ subproblem is
\begin{equation}\label{eq:fixed_lambda_eta}
  \inf_{\varphi\in\mathcal A} \mathcal L(\varphi,\eta,\lambda)=
  \inf_{\varphi\in\mathcal A}
\mathbb E\left[
\int_0^T f(t,W_t^\varphi,\varphi_t)\,dt
+
\Psi_{\lambda,\eta}(W_T^\varphi)
\right]
+\lambda(\eta-c).
\end{equation}
The last term $\lambda(\eta-c)$ is independent of $\varphi$, so the control
oracle only needs to solve a standard stochastic control problem with terminal
cost $\Psi_{\lambda,\eta}$.
We denote its output by
\[
(\varphi_{\lambda,\eta}, V_\lambda(\eta))
:=
\mathsf{Control}(\lambda,\eta),
\]
where $\varphi_{\lambda,\eta}=\arg\min_{\varphi\in\mathcal A}
\mathcal L(\varphi,\eta,\lambda)$ is the unique solution of \eqref{eq:fixed_lambda_eta} under Assumption \ref{ass:strict_convexity}, and $V_\lambda(\eta)
=
\mathcal L(\varphi_{\lambda,\eta},\eta,\lambda)=\inf_{\varphi\in\mathcal A} \mathcal L(\varphi,\eta,\lambda)$ is the corresponding optimal value.

\paragraph{Inner golden-section search over the CVaR threshold.}
We next construct a method for evaluating the dual function
$q$ defined in \eqref{eq:dual_function}. For any fixed $\lambda>0$,
\[
\begin{aligned}
q(\lambda)
&=
\inf_{\varphi\in\mathcal A,\ \eta\in E}
\mathcal L(\varphi,\eta,\lambda) \\
&=
\inf_{\eta\in E}
\inf_{\varphi\in\mathcal A}
\mathcal L(\varphi,\eta,\lambda)
=
\inf_{\eta\in E}V_\lambda(\eta).
\end{aligned}
\]
Thus, evaluating $q(\lambda)$ reduces to a one-dimensional minimization
of $V_\lambda$ over the compact interval $E$.

\begin{proposition}[Properties of the inner value function]
\label{prop:inner_value_properties}
Suppose that Assumptions~\ref{ass:market},
\ref{ass:action_set}, \ref{ass:regularity}, and~\ref{ass:slater} hold. Then, for every
$\lambda\in[0,\Lambda]$, the function $V_\lambda$ is finite and convex
on $E$ and moreover,
\[
|V_\lambda(\eta)-V_\lambda(\eta')|
\le
\lambda\kappa_\alpha|\eta-\eta'|,
\qquad
\eta,\eta'\in E,
\]
where
$\kappa_\alpha
:=
\max\left\{1,\frac{\alpha}{1-\alpha}\right\}$.
Consequently,
$\mathcal E_\lambda
:=
\arg\min_{\eta\in E}V_\lambda(\eta)$
is a nonempty closed interval.
\end{proposition}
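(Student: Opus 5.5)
Throughout, write $X^\varphi:=\ell(W_T^\varphi)$ and use the definition
\[
V_\lambda(\eta)=\inf_{\varphi\in\mathcal A}\mathcal L(\varphi,\eta,\lambda),
\qquad
\mathcal L(\varphi,\eta,\lambda)=J(\varphi)+\lambda C(\varphi,\eta).
\]
We establish, in order: finiteness, convexity, the Lipschitz bound, and the structure of $\mathcal E_\lambda$.

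\emph{Finiteness.} For the upper bound, test the infimum with the Slater control $\bar\varphi$ of Assumption~\ref{ass:slater}. Since $J(\bar\varphi)<\infty$ and $C(\bar\varphi,\eta)$ is finite (by the uniform square-integrability of Proposition~\ref{prop:eta_compactification}), we get $V_\lambda(\eta)<\infty$. For the lower bound, use $J\ge J_{\rm low}$ from Remark~\ref{rmk:J_lower_bound}. Then $C(\varphi,\eta)\ge \eta-c\ge \min E-c$, because the positive-part term is nonnegative. Hence $V_\lambda(\eta)\ge J_{\rm low}+\lambda(\min E-c)>-\infty$ for $\lambda\in[0,\Lambda]$.

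\emph{Convexity.} This is a partial-minimization argument. By Proposition~\ref{prop:convexity}, $(\varphi,\eta)\mapsto\mathcal L(\varphi,\eta,\lambda)$ is jointly convex on the convex set $\mathcal A\times E$. Take $\eta^1,\eta^2\in E$, $\theta\in(0,1)$, and $\epsilon>0$. Choose $\varphi^i$ with $\mathcal L(\varphi^i,\eta^i,\lambda)\le V_\lambda(\eta^i)+\epsilon$. Evaluating $\mathcal L$ at the convex combination gives
\[
V_\lambda(\theta\eta^1+(1-\theta)\eta^2)\le \theta V_\lambda(\eta^1)+(1-\theta)V_\lambda(\eta^2)+\epsilon .
\]
Letting $\epsilon\downarrow0$ yields convexity.

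\emph{Lipschitz bound.} Fix $\varphi$ and set $h(\eta):=\eta+\frac{1}{1-\alpha}\E[(X^\varphi-\eta)^+]$. The function $\eta\mapsto(x-\eta)^+$ is nonincreasing and $1$-Lipschitz. Its difference quotients satisfy
\[
-\mathbf 1_{\{x>\eta\}}\le \frac{(x-\eta')^+-(x-\eta)^+}{\eta'-\eta}\le 0 .
\]
Taking expectations, the slopes of $h$ lie in
\[
\bigl[1-\tfrac{1}{1-\alpha}\P(X^\varphi>\eta),\,1\bigr]\subset\bigl[-\tfrac{\alpha}{1-\alpha},\,1\bigr].
\]
Therefore $|h(\eta)-h(\eta')|\le\kappa_\alpha|\eta-\eta'|$, uniformly in $\varphi$. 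Since $J$ does not depend on $\eta$, it follows that $|\mathcal L(\varphi,\eta,\lambda)-\mathcal L(\varphi,\eta',\lambda)|\le\lambda\kappa_\alpha|\eta-\eta'|$ for every $\varphi$. Taking infima over $\varphi$ on both sides, an infimum of functions with a common Lipschitz constant keeps that constant. This transfers the bound to $V_\lambda$.

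The main care in this step is the slope bounds. In particular, the lower bound $-\alpha/(1-\alpha)$ comes from $\P(X>\eta)\le1$. This is the step I would verify most carefully, but it is elementary.

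\emph{Structure of $\mathcal E_\lambda$.} $V_\lambda$ is continuous (indeed Lipschitz) on the compact interval $E$, so its minimum is attained and $\mathcal E_\lambda\neq\emptyset$. The argmin of a continuous convex function is a closed convex subset of $\mathbb R$, hence a closed interval, possibly a single point. For $\lambda=0$, $V_0$ is constant, so $\mathcal E_0=E$.
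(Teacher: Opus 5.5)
Your proposal is correct and follows the paper's proof essentially step by step: the Slater control together with $J_{\rm low}$ and the nonnegativity of the hinge gives finiteness, partial minimization of the jointly convex Lagrangian gives convexity, a uniform-in-$\varphi$ Lipschitz bound on the Rockafellar--Uryasev term passes through the infimum, and compactness plus convexity give the structure of $\mathcal E_\lambda$. The only cosmetic difference is that you bound difference quotients of the expected function, whereas the paper bounds subgradients of $r_x$ pointwise; in your version, state $\eta<\eta'$ explicitly, since the bound $-\mathbf 1_{\{x>\eta\}}$ requires the indicator to sit at the smaller point.
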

\begin{proof}
Let $\bar\varphi$ be the finite-cost strategy in
Assumption~\ref{ass:slater}. Since $C(\bar\varphi,\eta)$ is finite,
$V_\lambda(\eta)\le\mathcal L(\bar\varphi,\eta,\lambda)$.
Moreover, by Remark~\ref{rmk:J_lower_bound} and the nonnegativity of
the hinge term,
\[
\mathcal L(\varphi,\eta,\lambda)
\ge
J_{\rm low}+\lambda(\eta-c),
\qquad \varphi\in\mathcal A.
\]
Hence $V_\lambda(\eta)>-\infty$.
For fixed $\lambda$, the affine dependence of $W^\varphi$ on $\varphi$,
the convexity of $f$, $g$, and $\ell$, and the convexity of $\mathcal A$
imply that $(\varphi,\eta)
\mapsto
\mathcal L(\varphi,\eta,\lambda)$
is jointly convex. Partial minimization over $\varphi$ therefore shows
that $V_\lambda$ is convex.

For fixed $x\in\mathbb R$, define
$r_x(\eta)
:=
\eta+\frac{1}{1-\alpha}(x-\eta)^+$.
Every subgradient of $r_x$ has an absolute value bounded by
$\kappa_\alpha
:=
\max\left\{1,\frac{\alpha}{1-\alpha}\right\},
$ and hence
$|r_x(\eta)-r_x(\eta')|
\le
\kappa_\alpha|\eta-\eta'|$.
It follows that, uniformly over $\varphi\in\mathcal A$,
\[
\left|
\mathcal L(\varphi,\eta,\lambda)
-
\mathcal L(\varphi,\eta',\lambda)
\right|
\le
\lambda\kappa_\alpha|\eta-\eta'|.
\]
Taking the infimum over $\varphi$ in both directions gives
\[
|V_\lambda(\eta)-V_\lambda(\eta')|
\le
\lambda\kappa_\alpha|\eta-\eta'|.
\]
Thus $V_\lambda$ is Lipschitz hence continuous. Since $E$ is compact,
$\mathcal E_\lambda$ is nonempty and closed, and its convexity follows
from the convexity of $V_\lambda$.
\end{proof}

Since $V_\lambda$ is convex, it decreases before
to the left of 
its minimizer set and
increases to the right of it. 
This allows us to locate a minimizer using function
values only. Starting from an interval $[\eta_{\rm low},\eta_{\rm high}]\subseteq E$ containing
$\mathcal E_\lambda$, define
\[
\rho:=\frac{\sqrt5-1}{2},
\qquad
\eta_1=\eta_{\rm high}-\rho(\eta_{\rm high}-\eta_{\rm low}),
\qquad
\eta_2=\eta_{\rm low}+\rho(\eta_{\rm high}-\eta_{\rm low}).
\]
We evaluate $V_\lambda$ at these two interior points $\eta_1,\eta_2$ using the control oracle
\[
(\varphi_{\lambda,\eta},V_\lambda(\eta))
=
\mathsf{Control}(\lambda,\eta).
\]
If $V_\lambda(\eta_1)\le V_\lambda(\eta_2)$, convexity implies that
the interval $[\eta_{\rm low},\eta_2]$ contains a minimizer, so the
right-hand portion may be discarded. 
Otherwise, the interval
$[\eta_1,\eta_{\rm high}]$ contains a minimizer, and the
left-hand portion may be discarded. The points are chosen according
to the golden ratio so that one previous function evaluation can be reused
after each interval reduction. Thus, after the initial two evaluations, each
iteration requires only one new call to the control oracle. The algorithm is summarized as follows.

\begin{algorithm}[H]
\small
\SetAlgoLined
\KwData{Multiplier $\lambda>0$, interval
$E=[\eta_{\rm low},\eta_{\rm high}]$, number of reductions $N_\eta\ge 1$}
\KwResult{Approximate inner solution
$(\widehat\varphi_\lambda,\widehat\eta_\lambda)$}

$\rho\gets(\sqrt5-1)/2$\;
$\eta_1\gets
\eta_{\rm high}
-\rho(\eta_{\rm high}-\eta_{\rm low})$\;
$\eta_2\gets
\eta_{\rm low}
+\rho(\eta_{\rm high}-\eta_{\rm low})$\;
$(\varphi_i,v_i)\gets\mathsf{Control}(\lambda,\eta_i)$,
$i=1,2$\;

\For{$j=1,\ldots,N_\eta$}{
\eIf{$v_1\le v_2$}{
    $\eta_{\rm high}\gets\eta_2$\;
    $(\eta_2,\varphi_2,v_2)\gets(\eta_1,\varphi_1,v_1)$\;
    $\eta_1\gets
    \eta_{\rm high}
    -\rho(\eta_{\rm high}-\eta_{\rm low})$\;
    $(\varphi_1,v_1)\gets\mathsf{Control}(\lambda,\eta_1)$\;
}{
    $\eta_{\rm low}\gets\eta_1$\;
    $(\eta_1,\varphi_1,v_1)\gets(\eta_2,\varphi_2,v_2)$\;
    $\eta_2\gets
    \eta_{\rm low}
    +\rho(\eta_{\rm high}-\eta_{\rm low})$\;
    $(\varphi_2,v_2)\gets\mathsf{Control}(\lambda,\eta_2)$\;
}
}
\Return{$(\varphi_1,\eta_1)$ if $v_1\le v_2$;
otherwise $(\varphi_2,\eta_2)$}\;
\caption{$\mathsf{EtaGoldenSearch}(\lambda)$}
\label{alg:eta_golden_search}
\end{algorithm}

\paragraph{Outer bisection over the Lagrange multiplier.}
The inner golden-section search via Algorithm~\ref{alg:eta_golden_search} approximately evaluates $q(\lambda)$ for each fixed multiplier $\lambda$. We now maximize the concave dual function $q$ over $[0,\Lambda]$. The key observation is that its derivative is given by the constraint residual, which we define in the following two cases of $\lambda$:

For every $\lambda>0$, let 
\[ (\varphi_\lambda,\eta_\lambda) \in \arg\min_{\varphi\in\mathcal A,\eta\in E} \mathcal L(\varphi,\eta,\lambda), \] 
and define the residual 
\[ R(\lambda):=C(\varphi_\lambda,\eta_\lambda). \] 
Under Assumption~\ref{ass:strict_convexity}, the control component
$\varphi_\lambda$ is unique. Although the optimizing threshold
$\eta_\lambda$ may not be unique, the value of the residual is unique.
Indeed, if $\eta_\lambda$ and $\widetilde\eta_\lambda$ are both optimal
thresholds for the same control, then
\[
J(\varphi_\lambda)
+\lambda C(\varphi_\lambda,\eta_\lambda)
=
J(\varphi_\lambda)
+\lambda C(\varphi_\lambda,\widetilde\eta_\lambda).
\]
Since $\lambda>0$,
$C(\varphi_\lambda,\eta_\lambda)
=
C(\varphi_\lambda,\widetilde\eta_\lambda)$.
Thus, $R(\lambda)$ is well defined.

For $\lambda=0$, we define $(\varphi_0, \eta_0)$ as follows. Solving the unconstrained problem yields $\varphi_0$. Next, we choose
\[
\eta_0\in \arg\min_{\eta\in E} \left\{ \eta+\frac{1}{1-\alpha} \mathbb E[(\ell(W_T^{\varphi_0})-\eta)^+] \right\}, 
\]
which yields the residual
\[ R(0)=C(\varphi_0,\eta_0) = \operatorname{CVaR}_\alpha(\ell(W_T^{\varphi_0}))-c.
\]
If $R(0)\le0$, then the unconstrained optimizer is feasible and the optimal multiplier is $\lambda^\star=0$. Otherwise, the constraint is active, and we search for a root of $R$ on $(0,\Lambda]$.

\begin{proposition}[Dual residual]
\label{prop:dual_residual}
Suppose that Assumptions~\ref{ass:market},
\ref{ass:action_set}, \ref{ass:regularity}, 
\ref{ass:strict_convexity}, and~\ref{ass:slater} hold. Then $R$ is nonincreasing and continuous on $[0,\infty)$, and
therefore uniformly continuous on $[0,\Lambda]$.
Moreover,  $q$ is differentiable on
$(0,\infty)$, with
$q'(\lambda)=R(\lambda)$ for all $\lambda>0$.
\end{proposition}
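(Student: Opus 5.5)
The plan is to exploit the fact that $q$ is an infimum of functions affine in $\lambda$. Combined with the uniqueness of the control component (Proposition~\ref{prop:lagrangian_minimizer_existence}) and weak compactness, this yields monotonicity, differentiability and continuity in that order. Throughout, values are finite: $q(\lambda)\le\mathcal L(\bar\varphi,\bar\eta,\lambda)<\infty$ by Assumption~\ref{ass:slater}. Moreover, $C$ is bounded on $\mathcal A\times E$ by the uniform $L^2$ bound of Proposition~\ref{prop:eta_compactification}, say $|C|\le M_C$. Hence every Lagrangian minimizer has $J(\varphi_\lambda)<\infty$.

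\emph{Monotonicity.} For $0<\lambda_1<\lambda_2$, write $J_i:=J(\varphi_{\lambda_i})$ and $C_i:=R(\lambda_i)$. Testing the minimality of $(\varphi_{\lambda_1},\eta_{\lambda_1})$ against $(\varphi_{\lambda_2},\eta_{\lambda_2})$, and vice versa, gives
\[
J_1+\lambda_1C_1\le J_2+\lambda_1C_2,\qquad J_2+\lambda_2C_2\le J_1+\lambda_2C_1.
\]
Adding these gives $(\lambda_2-\lambda_1)(C_2-C_1)\le0$. For $\lambda_1=0$, I use that $\varphi_0$ minimizes $J$ and $\eta_0$ minimizes $C(\varphi_0,\cdot)$. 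Comparing $(\varphi_\lambda,\eta_\lambda)$ with $(\varphi_0,\eta_0)$ in $\mathcal L(\cdot,\cdot,\lambda)$, and using $J(\varphi_0)\le J(\varphi_\lambda)$, gives $R(\lambda)\le R(0)$.

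\emph{Supergradients and differentiability on $(0,\infty)$.} The function $q$ is concave and finite on $[0,\infty)$, as an infimum of affine functions of $\lambda$. For any $\mu\ge0$,
\[
q(\mu)\le\mathcal L(\varphi_\lambda,\eta_\lambda,\mu)=q(\lambda)+(\mu-\lambda)R(\lambda),
\]
so $R(\lambda)\in\partial q(\lambda)=[q'_+(\lambda),q'_-(\lambda)]$. By concavity, $q'_-(\lambda)=\lim_{\mu\uparrow\lambda}q'_\pm(\mu)$. Since $R(\mu)$ is sandwiched between $q'_+(\mu)$ and $q'_-(\mu)$, this gives $q'_-(\lambda)=\lim_{\mu\uparrow\lambda}R(\mu)=:R_-$, and symmetrically $q'_+(\lambda)=R_+:=\lim_{\mu\downarrow\lambda}R(\mu)$. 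These limits exist by monotonicity. Differentiability, and with it continuity of $R$ on $(0,\infty)$, therefore reduces to showing $R_\pm=R(\lambda)$.

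\emph{Main step.} Showing $R_\pm=R(\lambda)$ is the main obstacle: a weak limit does not in general preserve the value of $C$. I would argue as follows. Take $\mu_n\to\lambda$ from one side. By weak compactness of $\mathcal A\times E$, pass to a subsequence with $(\varphi_{\mu_n},\eta_{\mu_n})\rightharpoonup(\varphi^*,\eta^*)$. Since $q$ is continuous on the interior, $J(\varphi_{\mu_n})=q(\mu_n)-\mu_nR(\mu_n)\to q(\lambda)-\lambda R_\pm$. Weak lower semicontinuity of $J$ and $C$ (from the proof of Proposition~\ref{prop:primal_existence}) then gives
\[
J(\varphi^*)\le q(\lambda)-\lambda R_\pm,\qquad C(\varphi^*,\eta^*)\le R_\pm.
\]
Multiplying the second inequality by $\lambda>0$ and adding yields $\mathcal L(\varphi^*,\eta^*,\lambda)\le q(\lambda)$. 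So $(\varphi^*,\eta^*)$ is a Lagrangian minimizer, and since the left side is also $\ge q(\lambda)$, both inequalities must be equalities. Hence $C(\varphi^*,\eta^*)=R_\pm$. By uniqueness of the control component and the well-definedness of $R$ established before the statement, $R(\lambda)=R_\pm$. Thus $q'=R$ on $(0,\infty)$, and $R$ is continuous there.

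\emph{Continuity at $0$.} Let $\mu_n\downarrow0$. Since $R$ is monotone, $R(0^+):=\lim R(\mu_n)$ exists, and the monotonicity step gives $R(0^+)\le R(0)$. For the reverse inequality, use $\mathcal L(\varphi_{\mu_n},\eta_{\mu_n},\mu_n)\le\mathcal L(\varphi_0,\eta_0,\mu_n)$ and $|C|\le M_C$ to obtain $\limsup J(\varphi_{\mu_n})\le J(\varphi_0)$. Any weak limit point $(\varphi^*,\eta^*)$ then satisfies $J(\varphi^*)\le J(\varphi_0)=\min J$. By strict convexity (Assumption~\ref{ass:strict_convexity}), $\varphi^*=\varphi_0$. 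Lower semicontinuity of $C$ then gives
\[
\liminf R(\mu_n)\ge C(\varphi_0,\eta^*)\ge\min_{\eta\in E}C(\varphi_0,\eta)=R(0).
\]
Hence $R$ is continuous on $[0,\infty)$. Uniform continuity on the compact interval $[0,\Lambda]$ then follows.
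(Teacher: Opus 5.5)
Your proposal is correct and follows essentially the same route as the paper. Both arguments rest on the same three ingredients:
\begin{itemize}
\item the exchange (secant) inequalities, which give monotonicity;
\item weak compactness together with weak lower semicontinuity of $J$ and $C$, which show that limits of Lagrangian minimizers are minimizers at the limit multiplier;
\item the well-definedness of $R$, which identifies the one-sided limits with $R(\lambda)$.
\end{itemize}
The differences are only organizational. You obtain $q'_\pm(\lambda)$ as one-sided limits of $R$ from the superdifferential of the concave function $q$, and you handle both one-sided limits at once by combining the two semicontinuity inequalities. At $\lambda=0$, you explicitly use strict convexity to identify the weak limit control with $\varphi_0$, a step the paper's proof uses only implicitly.
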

\begin{proof}
By Proposition~\ref{prop:eta_compactification}, there exists
$M_C<\infty$ such that
\[
|C(\varphi,\eta)|\le M_C,
\qquad
(\varphi,\eta)\in\mathcal A\times E.
\]
Together with the finite-cost strategy provided by
Assumption~\ref{ass:slater} and the lower bound on $J$, this implies
that $q$ is finite. Moreover,
\[
|q(\lambda_2)-q(\lambda_1)|
\le
M_C|\lambda_2-\lambda_1|,
\qquad
\lambda_1,\lambda_2\ge0,
\]
so $q$ is continuous.

Let $0\le\lambda_1<\lambda_2$, and choose a Lagrangian minimizer at
each multiplier, using $(\varphi_0,\eta_0)$ when $\lambda_1=0$.
Optimality gives
\[
q(\lambda_2)
\le
q(\lambda_1)
+
(\lambda_2-\lambda_1)R(\lambda_1)
\]
and
\[
q(\lambda_1)
\le
q(\lambda_2)
-
(\lambda_2-\lambda_1)R(\lambda_2).
\]
Therefore,
\begin{equation}
\label{eq:dual_secant_bound}
R(\lambda_2)
\le
\frac{q(\lambda_2)-q(\lambda_1)}
{\lambda_2-\lambda_1}
\le
R(\lambda_1).
\end{equation}
In particular, $R$ is nonincreasing.

Next, we show continuity. Let $\lambda_n\to\lambda$ monotonically and
choose
\[
(\varphi_n,\eta_n)
\in
\argmin_{\varphi\in\mathcal A,\ \eta\in E}
\mathcal L(\varphi,\eta,\lambda_n).
\]
By weak compactness, along a subsequence,
$\varphi_n\rightharpoonup\bar\varphi$ and
$\eta_n\to\bar\eta$.
The weak lower semicontinuity of the Lagrangian, the boundedness of
$C$, and the continuity of $q$ imply that
\[
(\bar\varphi,\bar\eta)
\in
\argmin_{\varphi\in\mathcal A,\ \eta\in E}
\mathcal L(\varphi,\eta,\lambda).
\]

Suppose first that $\lambda_n\downarrow\lambda$ and write
$R(\lambda_n)\to r$. Monotonicity gives $r\le R(\lambda)$. By the weak
lower semicontinuity of $C$,
$C(\bar\varphi,\bar\eta)\le r$.
If $\lambda>0$, then
$C(\bar\varphi,\bar\eta)=R(\lambda)$. If $\lambda=0$, the definition
of $\eta_0$ gives
$R(0)\le C(\bar\varphi,\bar\eta)$.
In either case, $R(\lambda)\le r$, and hence
$r=R(\lambda)$. Thus $R$ is right-continuous, including at zero.

Now suppose that $\lambda_n\uparrow\lambda$ with $\lambda>0$, and
write $R(\lambda_n)\to r$. Monotonicity gives $r\ge R(\lambda)$.
Moreover,
\[
J(\varphi_n)
=
q(\lambda_n)-\lambda_nR(\lambda_n).
\]
Using the weak lower semicontinuity of $J$ gives
\[
q(\lambda)-\lambda R(\lambda)
=
J(\bar\varphi)
\le
\liminf_{n\to\infty}J(\varphi_n)
=
q(\lambda)-\lambda r.
\]
Since $\lambda>0$, this implies $r\le R(\lambda)$. Therefore
$r=R(\lambda)$, proving left continuity.

Finally, for $\lambda>0$, applying
\eqref{eq:dual_secant_bound} to $\lambda$ and $\lambda+h$ yields
\[
R(\lambda+h)
\le
\frac{q(\lambda+h)-q(\lambda)}{h}
\le
R(\lambda).
\]
Letting $h\downarrow0$ and using right continuity gives
$q'_+(\lambda)=R(\lambda)$.
Similarly,
\[
R(\lambda)
\le
\frac{q(\lambda)-q(\lambda-h)}{h}
\le
R(\lambda-h),
\]
and left continuity gives
$q'_-(\lambda)=R(\lambda)$.
Thus $q$ is differentiable and
$q'(\lambda)=R(\lambda)$ for $\lambda>0$.
\end{proof}

In the active-constraint case, $\lambda^\star\ne 0$. Define the dual optimal set
$\mathcal Z
:=
\{\lambda\in[0,\Lambda]:R(\lambda)=0\}$. By Theorem~\ref{thm:strong_duality},
Proposition~\ref{prop:bound_lambda}, and the differentiability of
$q$, the set $Z$ is nonempty and coincides with the set of dual
optimizers.
Since $R$ is nonincreasing,
$R(\lambda)>0$
means that the multiplier is too small, so the search must move to the
right. Similarly,
$R(\lambda)<0$
means that the multiplier is too large, so the search must move to the
left. Thus, the root of $R$ can be located by bisection.

For a queried multiplier $\lambda$, Algorithm \ref{alg:eta_golden_search} returns
$(\widehat\varphi_\lambda,\widehat\eta_\lambda)
=
\mathsf{EtaGoldenSearch}(\lambda)$.
We use the corresponding residual
$\widehat R(\lambda)
:=
C(\widehat\varphi_\lambda,\widehat\eta_\lambda)$
in the outer bisection. Starting from $[0,\Lambda]$, the method evaluates
$\widehat R$ at the midpoint of the current interval. If
$\widehat R(\lambda)>0$, it retains the right half; if
$\widehat R(\lambda)<0$, it retains the left half. The main algorithm is shown below.

\begin{algorithm}[H]
\small
\SetAlgoLined
\KwData{Multiplier bound $\Lambda$, maximum iteration $N_\lambda\ge 1$}
\KwResult{Approximate primal-dual solution $(\widehat\varphi,\widehat\eta,\widehat\lambda)$}

Solve the unconstrained problem and compute $R(0)$\;
\If{$R(0)\le0$}{
   \Return{$(\varphi_0,\eta_0,0)$}\;
}

$\lambda_{\rm low}\gets 0$, $\lambda_{\rm high}\gets\Lambda$\;

\For{$k=0,1,\ldots,N_\lambda-1$}{
   $\lambda_{\rm mid}\gets(\lambda_{\rm low}+\lambda_{\rm high})/2$\;
   $(\varphi_{\rm mid},\eta_{\rm mid})\gets\mathsf{EtaGoldenSearch}(\lambda_{\rm mid})$\;
   $R_{\rm mid}\gets C(\varphi_{\rm mid},\eta_{\rm mid})$\;

   \If{$R_{\rm mid}=0$}{
       \Return{$(\varphi_{\rm mid},\eta_{\rm mid},\lambda_{\rm mid})$}\;
   }

   \eIf{$R_{\rm mid}>0$}{
       $\lambda_{\rm low}\gets\lambda_{\rm mid}$\;
   }{
       $\lambda_{\rm high}\gets\lambda_{\rm mid}$\;
   }
}
\Return{$(\varphi_{\rm mid},\eta_{\rm mid},\lambda_{\rm mid})$}\;
\caption{Nested bisection--golden search algorithm}
\label{alg:lambda_bisection}
\end{algorithm}

\subsection{Convergence Analysis}\label{ssec:algorithm_convergence}

We analyze the performance of the main algorithm under exact evaluations of the control oracle. We first study the convergence of the inner golden-search over the CVaR threshold (Algorithm \ref{alg:eta_golden_search}) and then establish the convergence of Algorithm \ref{alg:lambda_bisection}.

For a nonempty set $S\subset\mathbb R$, we write
\[
\operatorname{dist}(\eta,S)
:=
\inf_{\eta'\in S}|\eta-\eta'|.
\]

\begin{proposition}[Convergence of the inner golden-section search]
\label{prop:inner_search_convergence}
Let $\rho:=\frac{\sqrt5-1}{2}$.
Suppose that Assumptions~\ref{ass:market},
\ref{ass:action_set}, \ref{ass:regularity}, and~\ref{ass:slater} hold, and that the
control and value used by Algorithm \ref{alg:eta_golden_search} are exact. After $N_\eta$ interval
reductions, Algorithm~\ref{alg:eta_golden_search} returns
$(\widehat\varphi_\lambda,\widehat\eta_\lambda)$ satisfying
\[
\operatorname{dist}
\bigl(\widehat\eta_\lambda,\mathcal E_\lambda\bigr)
\le
|E|\rho^{N_\eta},
\]
and
\[
0
\le
V_\lambda(\widehat\eta_\lambda)-q(\lambda)
\le
\lambda\kappa_\alpha |E|\rho^{N_\eta}.
\]
\end{proposition}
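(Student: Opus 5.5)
The plan is to track the bracketing interval maintained by Algorithm~\ref{alg:eta_golden_search} and show two facts. First, the interval always intersects $\mathcal E_\lambda$. Second, its length contracts by the factor $\rho$ at each reduction. The value bound then follows from the Lipschitz estimate of Proposition~\ref{prop:inner_value_properties}.

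\textbf{Step 1 (invariant).} Write $I_j=[\eta^{(j)}_{\rm low},\eta^{(j)}_{\rm high}]$ for the interval after $j$ reductions, with $I_0=E$. Since $\mathcal E_\lambda\subset E$ is nonempty by Proposition~\ref{prop:inner_value_properties}, we have $I_0\cap\mathcal E_\lambda\neq\emptyset$. For the inductive step, let $\eta_1<\eta_2$ be the interior points of $I_j$ and consider the case $V_\lambda(\eta_1)\le V_\lambda(\eta_2)$.
\begin{itemize}
\item Suppose every minimizer lying in $I_j$ were $>\eta_2$, and pick such a minimizer $\eta^*$.
\item Convexity of $V_\lambda$ on the segment $[\eta_1,\eta^*]$ then forces $V_\lambda(\eta_2)\le\max\{V_\lambda(\eta_1),V_\lambda(\eta^*)\}=V_\lambda(\eta_1)$.
\item This gives $V_\lambda(\eta_2)=V_\lambda(\eta_1)$, and convexity again gives $V_\lambda(\eta_2)\le V_\lambda(\eta^*)$ on the same segment, so $\eta_2$ would itself be a minimizer. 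That contradicts the supposition.
\end{itemize}
Hence $[\eta^{(j)}_{\rm low},\eta_2]$ meets $\mathcal E_\lambda$. The other case, where $[\eta_1,\eta^{(j)}_{\rm high}]$ is retained, is symmetric. The slightly delicate point is that $\mathcal E_\lambda$ need not sit inside $I_j$, only intersect it. I therefore phrase the invariant as ``$I_j$ contains a minimizer of $V_\lambda$ over $E$.'' This works because a minimizer of $V_\lambda$ over $I_j$ that lies in $\mathcal E_\lambda$ is a global minimizer over $E$.

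\textbf{Step 2 (contraction and reuse).} The new interval has length $\rho|I_j|$, because $\eta_2-\eta^{(j)}_{\rm low}=\eta^{(j)}_{\rm high}-\eta_1=\rho|I_j|$. So $|I_{N_\eta}|=\rho^{N_\eta}|E|$. I should also check that reusing the old point is consistent with the formulas. This uses the identity $\rho^2=1-\rho$: for instance, the old $\eta_1$ equals the new $\eta^{(j+1)}_{\rm low}+\rho|I_{j+1}|$. This is only bookkeeping, but it is needed so that $\eta_1,\eta_2$ stay the golden points of the current interval.

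\textbf{Step 3 (conclusion).} The returned $\widehat\eta_\lambda$ is one of the interior points of $I_{N_\eta}$, so it lies in $I_{N_\eta}$. That interval contains some $\eta^*\in\mathcal E_\lambda$, which gives $\operatorname{dist}(\widehat\eta_\lambda,\mathcal E_\lambda)\le|\widehat\eta_\lambda-\eta^*|\le|I_{N_\eta}|=|E|\rho^{N_\eta}$.

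For the value bound, the lower inequality $0\le V_\lambda(\widehat\eta_\lambda)-q(\lambda)$ holds because $q(\lambda)=\inf_{E}V_\lambda$. For the upper inequality, take $\eta^*\in\mathcal E_\lambda$ attaining the distance, which exists since $\mathcal E_\lambda$ is closed. Then $V_\lambda(\widehat\eta_\lambda)-q(\lambda)=V_\lambda(\widehat\eta_\lambda)-V_\lambda(\eta^*)\le\lambda\kappa_\alpha|E|\rho^{N_\eta}$ by the Lipschitz bound of Proposition~\ref{prop:inner_value_properties}.

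I expect the main obstacle to be the invariant in Step 1 when $V_\lambda$ has flat pieces or ties, so that convexity is not strict. The argument above handles this by using only non-strict convexity inequalities.
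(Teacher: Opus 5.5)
Your proposal is correct and follows essentially the same route as the paper: you maintain the invariant that the current bracket meets $\mathcal E_\lambda$, contract its length by $\rho$ at each reduction, and then turn the distance bound into the value bound via the Lipschitz estimate of Proposition~\ref{prop:inner_value_properties}. Your explicit non-strict-convexity argument for ties and flat pieces, and your check (via $\rho^2=1-\rho$) that the reused point remains a golden point of the new bracket, only fill in details that the paper states in one line.
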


\begin{proof}
By Proposition~\ref{prop:inner_value_properties},
$V_\lambda$ is convex and
$\mathcal E_\lambda$ is a nonempty closed interval.

Consider a current search interval
$[\eta_{\rm low},\eta_{\rm high}]$ that intersects
$\mathcal E_\lambda$, and let $\eta_1<\eta_2$ be the two golden-section
points. If $V_\lambda(\eta_1)\le V_\lambda(\eta_2)$,
convexity implies that a minimizer exists in
$[\eta_{\rm low},\eta_2]$, so the algorithm may discard
$(\eta_2,\eta_{\rm high}]$. Similarly, if
$V_\lambda(\eta_1)>V_\lambda(\eta_2)$,
a minimizer exists in
$[\eta_1,\eta_{\rm high}]$. Thus every update retains an interval
intersecting $\mathcal E_\lambda$.

Each golden-section update reduces the interval length by the factor
$\rho$. Therefore, after $N_\eta$ reductions, the remaining interval has
length $|E|\rho^{N_\eta}$
and still intersects $\mathcal E_\lambda$. Since
$\widehat\eta_\lambda$ lies in this interval,
\[
\operatorname{dist}
\bigl(\widehat\eta_\lambda,\mathcal E_\lambda\bigr)
\le
|E|\rho^{N_\eta}.
\]

Choose
$\eta_\lambda^\star\in\mathcal E_\lambda$ such that
$|\widehat\eta_\lambda-\eta_\lambda^\star|
=
\operatorname{dist}
\bigl(\widehat\eta_\lambda,\mathcal E_\lambda\bigr)$.
Since
$q(\lambda)
=
\min_{\eta\in E}V_\lambda(\eta)
=
V_\lambda(\eta_\lambda^\star)$,
the Lipschitz estimate in
Proposition~\ref{prop:inner_value_properties} yields
\[
\begin{aligned}
0
&\le
V_\lambda(\widehat\eta_\lambda)-q(\lambda) \le
\lambda\kappa_\alpha
|\widehat\eta_\lambda-\eta_\lambda^\star| \le
\lambda\kappa_\alpha |E|\rho^{N_\eta}.
\end{aligned}
\]
\end{proof}

\begin{theorem}
\label{thm:nested_search_convergence}
Suppose that Assumptions~\ref{ass:market},
\ref{ass:action_set}, \ref{ass:regularity},
\ref{ass:slater}, and~\ref{ass:strict_convexity} hold, and that the
control, value, and residual evaluations used by the algorithm are exact.
Consider the active-constraint case $R(0)>0$, and let
$(\widehat\varphi,\widehat\eta,\widehat\lambda)$ be the output of the
Algorithm \ref{alg:lambda_bisection} with $N_\lambda$ outer
iterations and $N_\eta$ inner iterations.

Let $\rho:=\frac{\sqrt5-1}{2}$.
If $N_\lambda\to\infty$ and 
$2^{N_\lambda}\rho^{N_\eta}\to0$,
then
\[
\operatorname{dist}(\widehat\lambda,\mathcal Z)\to0,
\qquad
C(\widehat\varphi,\widehat\eta)\to0,
\qquad
J(\widehat\varphi)\to p^\star.
\]
Moreover,
\[
\widehat\varphi
\rightharpoonup
\varphi^\star
\qquad\text{weakly in }L^2_{\mathbb F},
\]
where $\varphi^\star$ is the unique primal optimal control.
\end{theorem}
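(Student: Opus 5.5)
The plan is to treat Algorithm~\ref{alg:lambda_bisection} as an \emph{inexact} bisection on the monotone continuous residual $R$ of Proposition~\ref{prop:dual_residual}. First I will show that the computed residual $\widehat R(\lambda)$ is a controlled perturbation of $R$ near $\lambda$. Then I will run the standard bisection bracketing argument with that perturbation. Finally, I will pass from multiplier convergence to convergence of the constraint, the value and the controls, using lower semicontinuity and uniqueness.

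\textbf{Step 1 (perturbation of the residual).} Fix a queried $\lambda>0$ and write $(\widehat\varphi_\lambda,\widehat\eta_\lambda)=\mathsf{EtaGoldenSearch}(\lambda)$. Since the oracle is exact, $\mathcal L(\widehat\varphi_\lambda,\widehat\eta_\lambda,\lambda)=V_\lambda(\widehat\eta_\lambda)$. Proposition~\ref{prop:inner_search_convergence} then gives
\[
\mathcal L(\widehat\varphi_\lambda,\widehat\eta_\lambda,\lambda)\le q(\lambda)+\epsilon,
\qquad
\epsilon:=\Lambda\kappa_\alpha|E|\rho^{N_\eta}.
\]
Since $\mathcal L$ is affine in $\lambda$, for any $\lambda'\ge0$ we have
\[
q(\lambda')\le \mathcal L(\widehat\varphi_\lambda,\widehat\eta_\lambda,\lambda')=\mathcal L(\widehat\varphi_\lambda,\widehat\eta_\lambda,\lambda)+(\lambda'-\lambda)\widehat R(\lambda)\le q(\lambda)+\epsilon+(\lambda'-\lambda)\widehat R(\lambda).
\]
Take $\lambda'=\lambda\pm h$ with $h>0$ and use the secant bound \eqref{eq:dual_secant_bound}. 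This gives, with $\tau:=\epsilon/h$,
\[
R(\lambda+h)-\tau\;\le\;\widehat R(\lambda)\;\le\;R(\lambda-h)+\tau .
\]
When $\lambda<h$, the right-hand side is read with $R(0)$, and the inequality is checked directly. I will choose $h=h_N:=\Lambda 2^{-N_\lambda}$. Then $\tau=\kappa_\alpha|E|\,2^{N_\lambda}\rho^{N_\eta}\to0$ by hypothesis, and $h_N\to0$.

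\textbf{Step 2 (bracketing; the main obstacle).} This is where I expect the real work. I will prove by induction over the outer iterations that every retained interval $[\lambda_{\rm low},\lambda_{\rm high}]$ satisfies
\[
R(\lambda_{\rm low}-h)\ge-\tau \text{ (or } \lambda_{\rm low}=0\text{)},
\qquad
R(\lambda_{\rm high}+h)\le\tau \text{ (or } \lambda_{\rm high}=\Lambda\text{)}.
\]
The inductive step is immediate from Step~1: $\widehat R>0$ forces $R(\lambda_{\rm mid}-h)>-\tau$, and $\widehat R<0$ forces $R(\lambda_{\rm mid}+h)<\tau$. At the endpoints, $R(0)>0$, and $R(\Lambda)\le0$ by Proposition~\ref{prop:bound_lambda} together with the monotonicity of $R$. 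The final interval has length $\Lambda2^{-N_\lambda}$ and contains $\widehat\lambda$. Hence $\widehat\lambda$ lies within $2h_N$ of both a point where $R\ge-\tau$ and a point where $R\le\tau$.

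To conclude $\operatorname{dist}(\widehat\lambda,\mathcal Z)\to0$, I will use the sets $\mathcal Z_\tau:=\{\lambda\in[0,\Lambda]:|R(\lambda)|\le\tau\}$. These are nonempty, compact (because $R$ is continuous) and decreasing to $\mathcal Z$. Since $R$ is continuous and nonincreasing, any bracketing of the type above lies within $O(h_N)$ of $\mathcal Z_\tau$. By compactness, $\sup_{\lambda\in\mathcal Z_\tau}\operatorname{dist}(\lambda,\mathcal Z)\to0$. The early-return case $R_{\rm mid}=0$ is covered by the same estimate.

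\textbf{Step 3 (constraint and value).} By Step~1,
\[
C(\widehat\varphi,\widehat\eta)=\widehat R(\widehat\lambda)\in\bigl[R(\widehat\lambda+h)-\tau,\;R(\widehat\lambda-h)+\tau\bigr].
\]
Both bounds tend to $0$ by the uniform continuity of $R$ on $[0,\Lambda]$ and $\operatorname{dist}(\widehat\lambda,\mathcal Z)\to0$. For the value, Step~1 gives
\[
J(\widehat\varphi)=\mathcal L(\widehat\varphi,\widehat\eta,\widehat\lambda)-\widehat\lambda\widehat R\le q(\widehat\lambda)+\epsilon+\Lambda|\widehat R|.
\]
Now $q$ is Lipschitz and $q\equiv d^\star=p^\star$ on $\mathcal Z$, so $\limsup J(\widehat\varphi)\le p^\star$.

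\textbf{Step 4 (weak convergence).} Take any subsequence. By weak compactness of $\mathcal A\times E$, extract a further subsequence with $\widehat\varphi\rightharpoonup\tilde\varphi$ and $\widehat\eta\to\tilde\eta$. Weak lower semicontinuity of $C$ (Proposition~\ref{prop:primal_existence}) gives $C(\tilde\varphi,\tilde\eta)\le0$, and weak lower semicontinuity of $J$ gives $J(\tilde\varphi)\le p^\star$. So $\tilde\varphi$ is primal optimal, and Proposition~\ref{prop:unique_control} gives $\tilde\varphi=\varphi^\star$. Since every subsequence has a further subsequence with the same limit, the whole sequence satisfies $\widehat\varphi\rightharpoonup\varphi^\star$.

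The lower bound $\liminf J(\widehat\varphi)\ge p^\star$ also follows along these subsequences. Together with Step~3, this yields $J(\widehat\varphi)\to p^\star$.
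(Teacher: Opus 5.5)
Your proposal is correct and follows essentially the same route as the paper. It uses the paper's key estimate $R(\lambda+h)-\tau\le\widehat R(\lambda)\le R(\lambda-h)+\tau$, obtained from the affine dependence of $\mathcal L$ on the multiplier and the secant bound for the concave $q$, followed by a bracketing induction for the outer bisection and a subsequence-plus-uniqueness argument for the weak convergence of $\widehat\varphi$.

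The differences are minor. You phrase the bracketing invariant through one-sided bounds and the sets $\mathcal Z_\tau$ rather than the paper's $\varepsilon$-neighborhood of $\mathcal Z$, and you treat the early-return case explicitly, which the paper's final-bracket-length argument skips. You obtain $\liminf J(\widehat\varphi)\ge p^\star$ from weak lower semicontinuity, whereas the paper uses $p^\star=q(\lambda^\star)\le J(\widehat\varphi)+\lambda^\star C(\widehat\varphi,\widehat\eta)$. Your $h=\Lambda 2^{-N_\lambda}$ is admissible because every queried midpoint lies at distance at least $\Lambda 2^{-N_\lambda}$ from $0$ and $\Lambda$, so the case $\lambda<h$ you mention never arises.
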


\begin{proof}
For every multiplier
$\lambda$ queried by the outer bisection, let $ (\widehat\varphi_\lambda,\widehat\eta_\lambda) $ denote the output of the inner bisection and the associated exact control oracle. Then by Proposition~\ref{prop:inner_search_convergence}, 
\[
0
\le
\mathcal L(
\widehat\varphi_\lambda,
\widehat\eta_\lambda,
\lambda)
-q(\lambda)
\le
\lambda\kappa_\alpha |E|\rho^{N_\eta}
\le
\Lambda\kappa_\alpha |E|\rho^{N_\eta}.
\]

For any $\mu\in[0,\Lambda]$, the definition of $q$ and the affine
dependence of the Lagrangian on the multiplier give
\[
\begin{aligned}
q(\mu)
&\le
\mathcal L(
\widehat\varphi_\lambda,
\widehat\eta_\lambda,
\mu) \\
&=
\mathcal L(
\widehat\varphi_\lambda,
\widehat\eta_\lambda,
\lambda)
+
(\mu-\lambda)
C(\widehat\varphi_\lambda,\widehat\eta_\lambda) \\
&\le
q(\lambda)
+
\Lambda\kappa_\alpha |E|\rho^{N_\eta}
+
(\mu-\lambda)
C(\widehat\varphi_\lambda,\widehat\eta_\lambda).
\end{aligned}
\]
Among all multipliers queried during the $N_\lambda$ outer iterations and
at the final midpoint, the smallest possible distance from the endpoints of
$[0,\Lambda]$ is $\Lambda2^{-(N_\lambda+1)}$. Hence every queried
multiplier $\lambda$ satisfies
\[
\lambda\pm\Lambda2^{-(N_\lambda+1)}\in[0,\Lambda].
\]
Taking
$\mu=\lambda+\Lambda2^{-(N_\lambda+1)}$
in the preceding inequality gives
\[
C(\widehat\varphi_\lambda,\widehat\eta_\lambda)
\ge
\frac{
q\!\left(\lambda+\Lambda2^{-(N_\lambda+1)}\right)-q(\lambda)
}{
\Lambda2^{-(N_\lambda+1)}
}
-
2\kappa_\alpha |E|2^{N_\lambda}\rho^{N_\eta}.
\]
Since $q$ is concave and $q'=R$,
\[
\frac{
q\!\left(\lambda+\Lambda2^{-(N_\lambda+1)}\right)-q(\lambda)
}{
\Lambda2^{-(N_\lambda+1)}
}
\ge
R\!\left(\lambda+\Lambda2^{-(N_\lambda+1)}\right).
\]
Similarly, taking $\mu=\lambda-\Lambda2^{-(N_\lambda+1)}$
and using concavity yields
\[
C(\widehat\varphi_\lambda,\widehat\eta_\lambda)
\le
R\!\left(\lambda-\Lambda2^{-(N_\lambda+1)}\right)
+
2\kappa_\alpha |E|2^{N_\lambda}\rho^{N_\eta}.
\]
Combining the two inequalities proves
\[
\begin{aligned}
R\!\left(\lambda+\Lambda2^{-(N_\lambda+1)}\right)
-
2\kappa_\alpha |E|2^{N_\lambda}\rho^{N_\eta}
&\le
C(\widehat\varphi_\lambda,\widehat\eta_\lambda) \\
&\le
R\!\left(\lambda-\Lambda2^{-(N_\lambda+1)}\right)
+
2\kappa_\alpha |E|2^{N_\lambda}\rho^{N_\eta}.
\end{aligned}
\]
Consequently,
\[
\begin{aligned}
\left|
C(\widehat\varphi_\lambda,\widehat\eta_\lambda)
-
R(\lambda)
\right|
\le{}&
\sup_{\substack{\lambda_1,\lambda_2\in[0,\Lambda]\\
|\lambda_1-\lambda_2|
\le\Lambda2^{-(N_\lambda+1)}}}
|R(\lambda_1)-R(\lambda_2)| +
2\kappa_\alpha |E|2^{N_\lambda}\rho^{N_\eta}.
\end{aligned}
\]
By Proposition~\ref{prop:dual_residual}, with
$R(0):=q'_+(0)
=
\min_{\eta\in E}C(\varphi_0,\eta)$
and $R(\Lambda):=q'_-(\Lambda)$, the residual $R$ is continuous on
$[0,\Lambda]$. Hence it is uniformly continuous.
Therefore, if
$N_\lambda\to\infty$ and 
$2^{N_\lambda}\rho^{N_\eta}\to 0$,
then
\begin{equation}\label{eq:uniform_residual_approximation}
   \sup_{\lambda}
\left|
C(\widehat\varphi_\lambda,\widehat\eta_\lambda)
-
R(\lambda)
\right|
\to0, 
\end{equation}
where the supremum is over the multipliers queried by the outer bisection.

We next prove that
$\operatorname{dist}(\widehat\lambda,\mathcal Z)\to0$.
Fix $\varepsilon>0$. Since $R$ is continuous and
$\mathcal Z=\{\lambda\in[0,\Lambda]:R(\lambda)=0\}$,
compactness implies that
\[
\inf_{\substack{\lambda\in[0,\Lambda]\\
\operatorname{dist}(\lambda,\mathcal Z)\ge\varepsilon}}
|R(\lambda)|>0,
\]
provided that the set over which the infimum is taken is nonempty. By \eqref{eq:uniform_residual_approximation}, for sufficiently large
$N_\lambda$ and $N_\eta$,
$C(\widehat\varphi_\lambda,\widehat\eta_\lambda)$ and $
R(\lambda)$
have the same sign at every queried midpoint satisfying
$\operatorname{dist}(\lambda,\mathcal Z)\ge\varepsilon$.

We claim that, throughout the outer bisection, the current bracket intersects
the closed $\varepsilon$-neighborhood of $\mathcal Z$. This is true for the
initial bracket $[0,\Lambda]$. Suppose it holds for the current bracket, and
let $\lambda$ be its midpoint. If $\operatorname{dist}(\lambda,\mathcal Z)\le \varepsilon$,
then either half selected by the algorithm contains $\lambda$ as an
endpoint, and hence still intersects the $\varepsilon$-neighborhood of
$\mathcal Z$. 

Otherwise,
$\operatorname{dist}(\lambda,\mathcal Z)>\varepsilon$,
so the approximate residual has the same sign as $R(\lambda)$. Since $R$
is nonincreasing, its zero set $\mathcal Z$ is an interval. If $\lambda$
lies to the left of the $\varepsilon$-neighborhood of $\mathcal Z$, then
$R(\lambda)>0$, and the algorithm retains the right half of the bracket,
which still intersects that neighborhood. Similarly, if $\lambda$ lies to
the right of the $\varepsilon$-neighborhood, then $R(\lambda)<0$, and the
algorithm retains the left half, which again intersects the neighborhood.
The claim therefore follows by induction.

Consequently, the final bracket contains some point $\widetilde\lambda$
such that
$\operatorname{dist}(\widetilde\lambda,\mathcal Z)\le\varepsilon$.
Since $\widehat\lambda$ also lies in the final bracket, whose length is at
most $\Lambda2^{-N_\lambda}$,
\[
\begin{aligned}
\operatorname{dist}(\widehat\lambda,\mathcal Z)
&\le
|\widehat\lambda-\widetilde\lambda|
+
\operatorname{dist}(\widetilde\lambda,\mathcal Z) \le
\Lambda2^{-N_\lambda}+\varepsilon.
\end{aligned}
\]
Taking the limit superior and then letting $\varepsilon\downarrow0$ gives
$\operatorname{dist}(\widehat\lambda,\mathcal Z)\to0$.

Because $R$ is continuous and vanishes on $\mathcal Z$,
$R(\widehat\lambda)\to0$. Applying \eqref{eq:uniform_residual_approximation} at the final multiplier yields
\[
\begin{aligned}
\left|
C(\widehat\varphi,\widehat\eta)
\right|
&\le
\left|
C(\widehat\varphi,\widehat\eta)
-
R(\widehat\lambda)
\right|
+
|R(\widehat\lambda)| \to0.
\end{aligned}
\]

Applying Proposition \ref{prop:inner_search_convergence} at $\widehat\lambda$ gives
\[
\mathcal L(
\widehat\varphi,
\widehat\eta,
\widehat\lambda)
\le
q(\widehat\lambda)
+
\Lambda\kappa_\alpha|E|\rho^{N_\eta}
\le
p^\star
+
\Lambda\kappa_\alpha|E|\rho^{N_\eta}.
\]
Therefore,
\[
J(\widehat\varphi)-p^\star
\le
\Lambda\kappa_\alpha|E|\rho^{N_\eta}
+
\widehat\lambda
\left|
C(\widehat\varphi,\widehat\eta)
\right|.
\]
On the other hand, for any $\lambda^\star\in\mathcal Z$, strong duality
implies
\[
p^\star
=
q(\lambda^\star)
\le
J(\widehat\varphi)
+
\lambda^\star
C(\widehat\varphi,\widehat\eta).
\]
Since
$\widehat\lambda,\lambda^\star\in[0,\Lambda]$, we conclude that
\[
\left|
J(\widehat\varphi)-p^\star
\right|
\le
\Lambda\kappa_\alpha|E|\rho^{N_\eta}
+
\Lambda
\left|
C(\widehat\varphi,\widehat\eta)
\right|
\to0.
\]

It remains to prove the convergence of the controls. Since
$\mathcal A$ is weakly compact in
$L^2_{\mathbb F}([0,T]\times\Omega;\mathbb R^m)$ and $E$ is compact,
every subsequence of
$(\widehat\varphi,\widehat\eta)$ admits a further subsequence, still
denoted by $(\widehat\varphi,\widehat\eta)$, such that
\[
\widehat\varphi\rightharpoonup\overline\varphi
\quad\text{weakly in }L^2_{\mathbb F},
\qquad
\widehat\eta\to\overline\eta\in E.
\]
By the weak lower semicontinuity of $J$ and $C$,
$J(\overline\varphi)
\le
\liminf J(\widehat\varphi)
=
p^\star$,
and
$C(\overline\varphi,\overline\eta)
\le
\liminf C(\widehat\varphi,\widehat\eta)
=
0$.
Thus $(\overline\varphi,\overline\eta)$ is primal feasible and attains the
primal value. Hence $\overline\varphi$ is a primal optimal control.
By Assumption~\ref{ass:strict_convexity}, the primal optimal control is
unique, and therefore $\overline\varphi=\varphi^\star$.
Thus every weakly convergent subsequence of $\widehat\varphi$ has limit
$\varphi^\star$. Since $\mathcal A$ is weakly compact, it follows that the
entire sequence satisfies
\[
\widehat\varphi
\rightharpoonup
\varphi^\star
\qquad
\text{weakly in }L^2_{\mathbb F}.
\]
\end{proof}

The strict-convexity condition in
Theorem~\ref{thm:nested_search_convergence} guarantees the weak convergence. Strong convergence follows under
the following stronger condition.

\begin{assumption}\label{ass:strong_convexity}
$J$ is
strongly convex on its effective domain with respect to the
$L^2_{\mathbb F}$ norm. That is, there exists a constant $\kappa>0$ such that for any
$\varphi,\psi\in\mathcal A$ satisfying
$J(\varphi)<\infty$,
$J(\psi)<\infty$,
and $\theta\in(0,1)$,
\[
\begin{aligned}
J\bigl(\theta\varphi+(1-\theta)\psi\bigr)
\le
\theta J(\varphi)+(1-\theta)J(\psi)
-
\frac{\kappa}{2}\theta(1-\theta)
\|\varphi-\psi\|_{L^2_{\mathbb F}}^2.
\end{aligned}
\]
\end{assumption}

Similar to Proposition \ref{prop:strict_convexity}, we also provide some explicit conditions to guarantee the strongly convex property of $J$.

\begin{proposition}\label{prop:strong_convexity}
    Suppose that Assumptions~\ref{ass:market},
\ref{ass:action_set}, and \ref{ass:regularity} hold.
Then the objective functional $J$ is strongly convex on its effective domain, if one of the following conditions hold:

\begin{enumerate}
    \item \label{ass:strong_convex_f} The function
$(w,a)\mapsto f(t,w,a)$ is strongly convex. That is,  
there exist a constant $\kappa_f>0$ 
such that, for almost
every $t\in[0,T]$, every $(w,a),(w',a')\in\mathbb R\times A$, and every
$\theta\in(0,1)$,
\[
\begin{aligned}
f\bigl(
t,\theta w+(1-\theta)w',
\theta a+(1-\theta)a'
\bigr)
&\le
\theta f(t,w,a)
+(1-\theta)f(t,w',a')
\\
&\qquad
-\frac{\kappa_f}{2}\theta(1-\theta)
\left(
|w-w'|^2+|a-a'|^2
\right).
\end{aligned}
\]

\item \label{ass:strong_convex_g} 
There exists $\underline\nu>0$ such that
$\sigma(t,s)\sigma(t,s)^\top
\succeq
\underline\nu I_m$, for all $(t,s)\in[0,T]\times\mathbb R^m$, and the function $g$ is strongly convex on $\mathbb R$. More precisely, there exists a constant $\kappa_g>0$ such that for every $w,w'\in \mathbb R$ and every $\theta\in(0,1)$,
\[
g\bigl(\theta w+(1-\theta)w'\bigr)
\le 
\theta g(w)+(1-\theta)g(w')-\frac{\kappa_g}{2}\theta(1-\theta)
\left(
|w-w'|^2
\right).
\]

\end{enumerate}
\end{proposition}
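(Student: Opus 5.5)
The plan mirrors the proof of Proposition~\ref{prop:strict_convexity}. Fix $\varphi,\psi\in\mathcal A$ with $J(\varphi),J(\psi)<\infty$ and $\theta\in(0,1)$, and set $\varphi^\theta:=\theta\varphi+(1-\theta)\psi$. Since the wealth equation is affine in the control, $W^{\varphi^\theta}=\theta W^\varphi+(1-\theta)W^\psi$. We then split $J$ into its running part and its terminal part. We apply the quantitative (strong) convexity inequality to whichever part is strongly convex, and plain convexity (Proposition~\ref{prop:convexity}) to the other.

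Under Condition~\ref{ass:strong_convex_f}, we apply the pointwise inequality for $f$ at $(W_t^\varphi,\varphi_t)$ and $(W_t^\psi,\psi_t)$. We drop the nonnegative term $|W_t^\varphi-W_t^\psi|^2$ and keep only $|\varphi_t-\psi_t|^2$. Integrating in $dt\otimes d\mathbb P$ gives
\[
\mathbb E\int_0^T f(t,W^{\varphi^\theta}_t,\varphi^\theta_t)\,dt\le \theta\,\mathbb E\int_0^T f(t,W^\varphi_t,\varphi_t)\,dt+(1-\theta)\,\mathbb E\int_0^T f(t,W^\psi_t,\psi_t)\,dt-\frac{\kappa_f}{2}\theta(1-\theta)\|\varphi-\psi\|^2_{L^2_{\mathbb F}}.
\]
Some care is needed with the extended values. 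The two running integrals are finite, because $J(\varphi),J(\psi)<\infty$ and both the running and terminal parts are bounded below (via $\underline f$ and the affine minorant of $g$ in Remark~\ref{rmk:J_lower_bound}). Hence subtracting the quadratic term is legitimate. We then add the convexity inequality for $g$ to obtain Assumption~\ref{ass:strong_convexity} with $\kappa=\kappa_f$.

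Under Condition~\ref{ass:strong_convex_g}, the pointwise strong convexity of $g$ at $W_T^\varphi$ and $W_T^\psi$, after taking expectations, gives the defect
\[
-\frac{\kappa_g}{2}\theta(1-\theta)\,\mathbb E|W_T^\varphi-W_T^\psi|^2.
\]
The key step is to convert this terminal-wealth gap into a control gap. For this we invoke the lower bound \eqref{eq:lower_bound_wealth} established in the proof of Proposition~\ref{prop:strict_convexity}, which uses only $\sigma\sigma^\top\succeq\underline\nu I_m$, the bound on $\mu$, and $r\ge0$:
\[
\mathbb E|W_T^\varphi-W_T^\psi|^2\ge \frac{\underline\nu}{2}e^{-KT}\|\varphi-\psi\|^2_{L^2_{\mathbb F}}, \qquad K=\frac{2\overline\mu^2}{\underline\nu}.
\]
Combining this with convexity of the running cost yields Assumption~\ref{ass:strong_convexity} with $\kappa=\kappa_g\underline\nu e^{-KT}/2$.

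The main obstacle is this wealth-to-control conversion in case (ii). It is already handled by \eqref{eq:lower_bound_wealth}, so we only need to check that the Itô/Young derivation there is valid for arbitrary square-integrable controls in $\mathcal A$. It is: $A$ is compact, so the stochastic integral is a true martingale and its expectation vanishes. The only remaining subtlety is the finiteness bookkeeping for the extended-valued $f$ and $g$. This is resolved by the lower bounds noted above, which guarantee that every term in the inequalities is finite whenever $J(\varphi)$ and $J(\psi)$ are.
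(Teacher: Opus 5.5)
Your proposal is correct and follows the paper's proof essentially step for step. In case (i) you drop the wealth-gap part of the strong-convexity defect of $f$. In case (ii) you convert the terminal-wealth defect of $g$ into a control defect via \eqref{eq:lower_bound_wealth}, which gives the same constant $\kappa_g\underline\nu e^{-KT}/2$. The paper leaves implicit the finiteness argument for the extended-valued $f$ and $g$ and the true-martingale check in the It\^o step; your proposal supplies both.
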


\begin{proof}
Let $\varphi,\psi\in\mathcal A$ satisfy
$J(\varphi),J(\psi)<\infty$, and let $\theta\in(0,1)$. Set
$\varphi^\theta:=\theta\varphi+(1-\theta)\psi$.
As in the proof of Proposition~\ref{prop:strict_convexity}, the
affine wealth dynamics imply
$W^{\varphi^\theta}
=
\theta W^\varphi+(1-\theta)W^\psi$.

Suppose first that Condition~\ref{ass:strong_convex_f} holds. By the
strong convexity of $f$ and the convexity of $g$,
\[
\begin{aligned}
J(\varphi^\theta)
\le{}&
\theta J(\varphi)+(1-\theta)J(\psi) \\
&-
\frac{\kappa_f}{2}\theta(1-\theta)
\E\int_0^T
\left(
|W_t^\varphi-W_t^\psi|^2
+
|\varphi_t-\psi_t|^2
\right)dt .
\end{aligned}
\]
Dropping the first nonnegative term gives
\[
J(\varphi^\theta)
\le
\theta J(\varphi)+(1-\theta)J(\psi)
-
\frac{\kappa_f}{2}\theta(1-\theta)
\|\varphi-\psi\|_{L^2_{\mathbb F}}^2.
\]
Hence $J$ is $\kappa_f$-strongly convex.

Now suppose that Condition~\ref{ass:strong_convex_g} holds. Combining~\eqref{eq:lower_bound_wealth}, 
the strong convexity of $g$ and the convexity of the running-cost
term therefore implies
\[
J(\varphi^\theta)
\le
\theta J(\varphi)+(1-\theta)J(\psi)
-
\frac{c\kappa_g}{2}\theta(1-\theta)
\|\varphi-\psi\|_{L^2_{\mathbb F}}^2,
\]
where $c:=\frac{\nu}{2}e^{-KT}.$
Thus $J$ is $c\kappa_g$-strongly convex.
\end{proof}

\begin{corollary}[Strong convergence of the computed controls]
\label{cor:strong_control_convergence}
Suppose that Assumptions~\ref{ass:market},
\ref{ass:action_set}, \ref{ass:regularity},
\ref{ass:slater}, and~\ref{ass:strong_convexity} hold. 
Then
\[
\widehat\varphi\to\varphi^\star
\qquad\text{strongly in }L^2_{\mathbb F}.
\]
\end{corollary}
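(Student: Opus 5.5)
Strong convexity implies strict convexity, so Assumption~\ref{ass:strict_convexity} holds. Theorem~\ref{thm:nested_search_convergence} is therefore available: it gives $J(\widehat\varphi)\to p^\star=J(\varphi^\star)$, $C(\widehat\varphi,\widehat\eta)\to0$, and $\widehat\varphi\rightharpoonup\varphi^\star$ weakly in $L^2_{\mathbb F}$. The plan is to upgrade weak to strong convergence by applying the strong convexity inequality of Assumption~\ref{ass:strong_convexity} to the pair $(\widehat\varphi,\varphi^\star)$ at the midpoint $\theta=\tfrac12$. Along the sequence, $J(\widehat\varphi)<\infty$ eventually, since $J(\widehat\varphi)\to p^\star<\infty$. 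Also $J(\varphi^\star)=p^\star<\infty$ by Assumption~\ref{ass:slater}. So the inequality applies and gives
\[
\frac{\kappa}{8}\|\widehat\varphi-\varphi^\star\|_{L^2_{\mathbb F}}^2
\le
\frac12 J(\widehat\varphi)+\frac12 J(\varphi^\star)-J(\psi),
\qquad
\psi:=\tfrac12(\widehat\varphi+\varphi^\star).
\]
The right-hand side converges to $p^\star-\liminf J(\psi)$, so it suffices to show $\liminf J(\psi)\ge p^\star$.

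The main obstacle is that $\psi$ need not be primal feasible, so we cannot directly bound $J(\psi)\ge p^\star$. Two routes are available, and I would try the first.

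The first route uses weak lower semicontinuity. Since $\psi\rightharpoonup\varphi^\star$ weakly in $L^2_{\mathbb F}$, and $J$ is weakly lower semicontinuous (established in the proof of Proposition~\ref{prop:primal_existence}), we get $\liminf J(\psi)\ge J(\varphi^\star)=p^\star$. This closes the argument immediately.

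The second route is a duality backup. Fix $\lambda^\star\in\mathcal Z$ and set $\bar\eta:=\tfrac12(\widehat\eta+\eta^\star)$. Strong duality (Theorem~\ref{thm:strong_duality}) gives
\[
J(\psi)\ge p^\star-\lambda^\star C(\psi,\bar\eta).
\]
By joint convexity of $C$ (Proposition~\ref{prop:convexity}),
\[
C(\psi,\bar\eta)\le\tfrac12 C(\widehat\varphi,\widehat\eta)+\tfrac12 C(\varphi^\star,\eta^\star)\le \tfrac12 C(\widehat\varphi,\widehat\eta)\to0.
\]
Since $\lambda^\star\le\Lambda$, this again yields $\liminf J(\psi)\ge p^\star$.

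Either way, $\limsup\|\widehat\varphi-\varphi^\star\|^2_{L^2_{\mathbb F}}\le\frac{8}{\kappa}(p^\star-p^\star)=0$. This proves strong convergence in $L^2_{\mathbb F}$ along the sequence of algorithm outputs with $N_\lambda\to\infty$ and $2^{N_\lambda}\rho^{N_\eta}\to0$, under the same standing hypotheses as Theorem~\ref{thm:nested_search_convergence}.
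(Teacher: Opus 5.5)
Your proof is correct, but your primary route differs from the paper's. The paper never uses the weak convergence from Theorem~\ref{thm:nested_search_convergence}. It fixes $\lambda^\star\in\mathcal Z$ and notes, via the recovery argument of Corollary~\ref{cor:saddle_optimality}, that $(\varphi^\star,\eta^\star)$ minimizes $\mathcal L(\cdot,\cdot,\lambda^\star)$. It then observes that this Lagrangian is $\kappa$-strongly convex in the control component, because $C$ is jointly convex. This gives the quadratic-growth bound
\[
\frac{\kappa}{2}\|\widehat\varphi-\varphi^\star\|^2_{L^2_{\mathbb F}}\le \mathcal L(\widehat\varphi,\widehat\eta,\lambda^\star)-q(\lambda^\star)=J(\widehat\varphi)-p^\star+\lambda^\star C(\widehat\varphi,\widehat\eta),
\]
whose right-hand side tends to zero.

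Your backup route is essentially this same argument, evaluated at $\theta=\tfrac12$ instead of letting $\theta\downarrow0$. It yields the same bound with $\kappa/4$ in place of $\kappa/2$.

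Your first route is the classical argument that weak convergence, convergence of the values $J(\widehat\varphi)\to J(\varphi^\star)$, and uniform convexity together imply norm convergence. It needs only the weak lower semicontinuity of $J$ and the weak limit already supplied by the theorem, with no multiplier. The price is that it is purely qualitative: it inherits the compactness-plus-uniqueness subsequence argument behind the weak convergence.

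The paper's route (and your second one) gives an explicit estimate instead. Combined with the bound $|J(\widehat\varphi)-p^\star|\le\Lambda\kappa_\alpha|E|\rho^{N_\eta}+\Lambda|C(\widehat\varphi,\widehat\eta)|$ from the theorem's proof, it controls $\|\widehat\varphi-\varphi^\star\|_{L^2_{\mathbb F}}$ directly by the inner-search error and the constraint residual. It also does not need the weak-convergence part of the theorem at all.
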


\begin{proof}

Let $\lambda^\star\in\mathcal Z$. By strong duality and primal recovery,
there exists $\eta^\star\in E$ such that
$(\varphi^\star,\eta^\star)$ minimizes
$\mathcal L(\cdot,\cdot,\lambda^\star)$. Since $C$ is jointly convex,
$\mathcal L(\cdot,\cdot,\lambda^\star)$ is
$\kappa$-strongly convex in its control component. Consequently,
\[
\frac{\kappa}{2}
\|\widehat\varphi-\varphi^\star\|_{L^2_{\mathbb F}}^2
\le
\mathcal L(
\widehat\varphi,\widehat\eta,\lambda^\star)
-
q(\lambda^\star).
\]
Using $q(\lambda^\star)=p^\star$, the right-hand side equals
\[
J(\widehat\varphi)-p^\star
+
\lambda^\star C(\widehat\varphi,\widehat\eta),
\]
which converges to zero by
Theorem~\ref{thm:nested_search_convergence}. Hence
\[
\widehat\varphi\to\varphi^\star
\qquad\text{strongly in }L^2_{\mathbb F}.
\]
\end{proof}

\section{Numerical Experiments}\label{sec:numerics}

This section evaluates Algorithm~\ref{alg:lambda_bisection} in a scalar linear--quadratic Black--Scholes model. 
This benchmark isolates the dynamic effect of a terminal CVaR constraint, first in a complete market, and then in the presence of non-traded risks. 

Let the traded stock be driven by a Brownian motion $B$, and let $B^\perp$ be an independent Brownian motion driving the nontraded risk exposure.
With $r=0$,
\begin{align}\label{dyn:gbm_stock}
dS_t=S_t(\mu\,dt+\sigma\,dB_t),
\qquad \mu>0,\quad \sigma>0,\quad S_0>0,
\end{align}
and the endowment dynamic satisfies $d\zeta_t=\beta^\perp dB_t^\perp$.
With $\varphi_t$ denoting dollar risky exposure, wealth evolves as
\begin{align}\label{dyn:lq_wealth}
dW_t^\varphi
=\varphi_t\mu\,dt+\varphi_t\sigma\,dB_t
+\beta^\perp dB_t^\perp,
\qquad W_0^\varphi=w_0.
\end{align}
The CVaR-constrained portfolio management problem is therefore:
\begin{align}\label{target:lq_problem}
\inf_{\varphi\in\A}\quad
&J(\varphi)
=\E\left[\int_0^T
\left\{
\frac{\gamma}{2}\left[(\sigma\varphi_t)^2+(\beta^\perp)^2\right]
-\mu\varphi_t
\right\}dt\right]\notag\\
\text{subject to}\quad
&\operatorname{CVaR}_{\alpha}(-W_T^\varphi)\leq c.
\end{align}
For $c<0$, the constraint requires average terminal wealth in the worst $(1-\alpha)$ fraction of outcomes to be at least $-c$; it is not a pathwise wealth floor.
When the CVaR constraint is nonbinding, pointwise minimization gives the Merton dollar exposure
\begin{align}\label{unconstraint:merton_line}
\bar\varphi_t=\frac{\mu}{\gamma\sigma^2}.
\end{align}
The independent endowment shock adds variance but does not change~\eqref{unconstraint:merton_line}.
When the CVaR constraint is binding, however, the investor may reduce traded exposure to offset its effect on the lower tail.

Table~\ref{tab:numerical_common_parameters} reports the common calibration.
The nonbinding and binding CVaR limits are $c=-0.86$ and $c=-0.94$, respectively.
For fixed $(\lambda,\eta)$, the inner dynamic program computes a Markov feedback control on a discretized state grid.
We optimize $\eta$ by golden-section search and use an outer bisection in $\lambda$ to enforce the CVaR constraint.
Code and further implementation details are available at \href{https://github.com/xf-shi/Dynamic-Portfolio-under-CVaR}{https://github.com/xf-shi/Dynamic-Portfolio-under-CVaR}.

\begin{table}[H]
\centering
\caption{Common parameters used in the numerical experiments.}
\label{tab:numerical_common_parameters}
\small
\begin{tabular}{ll}
\toprule
Parameter & Value \\
\midrule
Horizon & $T=1$ \\
Risk-free rate & $r=0.00$ \\
Excess return & $\mu=0.08$ \\
Volatility & $\sigma=0.20$ \\
Initial stock price & $S_0=1$ \\
Risk aversion & $\gamma=5$ \\
Initial wealth & $w_0=1$ \\
Merton exposure & $\bar\varphi_t=0.40$ \\
CVaR confidence level & $\alpha=0.95$ \\
Nonbinding CVaR limit & $c=-0.86$ \\
Binding CVaR limit & $c=-0.94$ \\
\bottomrule
\end{tabular}
\end{table}

\subsection{Complete Market Benchmark}\label{ssec:complete}
For the complete-market benchmark, we suppress the orthogonal factor
$B^\perp$ and take the filtration to be generated by the traded
Brownian motion $B$. Equivalently, there is no nontraded endowment
risk and the single traded risky asset spans the single source of
market uncertainty.
In the complete market, the nontraded risk exposure is $\beta^\perp=0$. Table~\ref{tab:complete} shows that the constraint is nonbinding at $c=-0.8600$: the Merton strategy has terminal loss CVaR $-0.8671<-0.8600$ and $\widehat\lambda=0$.
In the binding case with $c=-0.9400$, the calibrated multiplier is $0.0720$.
Average risky exposure falls from $0.4000$ to $0.2981$, whereas expected terminal wealth declines by only about $0.8\%$.

\begin{table}[H]
\centering
\small
\begin{tabular}{lccccc}
\toprule
Case & $c$ & $\widehat\lambda$
& $\operatorname{CVaR}_{\alpha}(-W_T^\star)$
& $\E[W_T^\star]$ & average $\varphi_t^\star$ \\
\midrule
Nonbinding & $-0.8600$ & $0$ & $-0.8671$ & $1.0322$ & $0.4000$ \\
Binding & $-0.9400$ & $0.0720$ & $-0.9406$ & $1.0242$ & $0.2981$ \\
\bottomrule
\end{tabular}
\caption{Summary statistics for the complete-market benchmark in the nonbinding and binding cases of the terminal CVaR constraint.}
\label{tab:complete}
\end{table}

Figure~\ref{fig:complete_position} shows that the policy for the binding case is not a constant rescaling of the Merton exposure.
It returns toward $0.400$ along the selected favorable path, which ends at $W_T^\star\approx1.311$, but falls to about $0.068$ late in the selected unfavorable path, which ends at $W_T^\star\approx0.947$.
Thus the binding constraint induces state-dependent de-risking while retaining participation in favorable states.

\begin{figure}[H]
\centering
\includegraphics[width=0.48\linewidth]{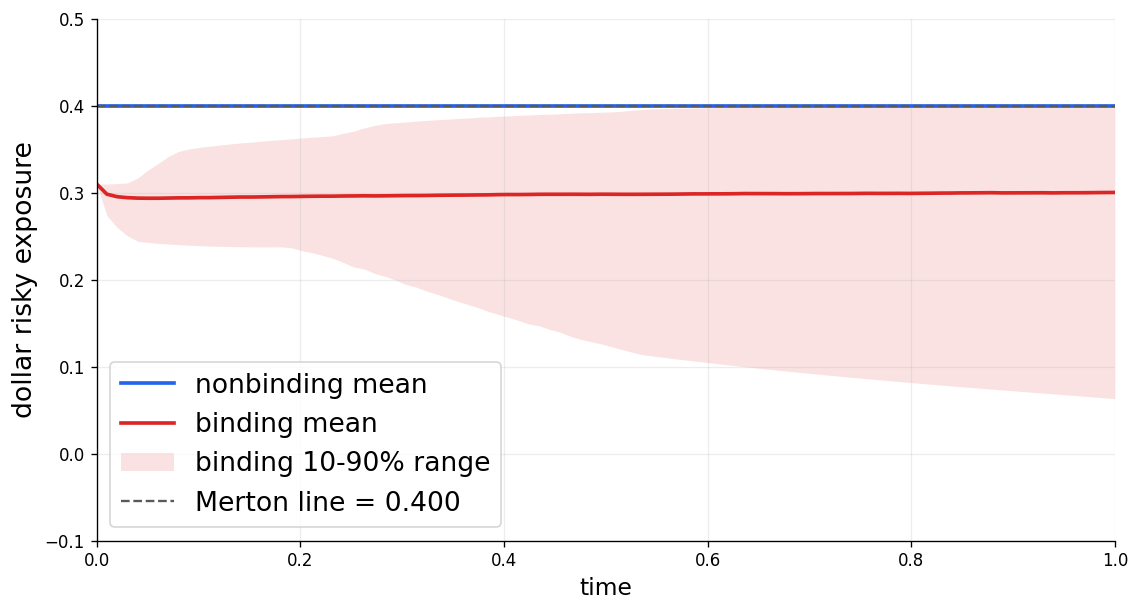}
\includegraphics[width=0.48\linewidth]{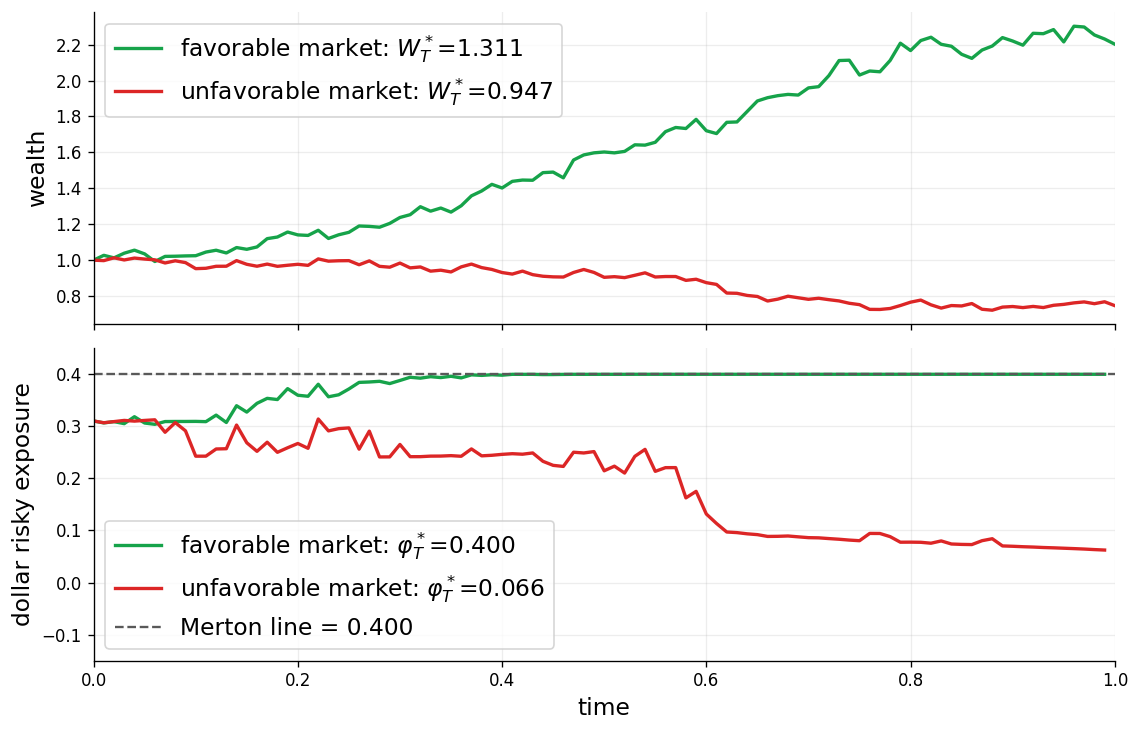}
\caption{Optimal dollar risky exposure in the complete market.
Left: mean exposure in the binding case and its 10--90\% simulation band, with the Merton exposure from the nonbinding case.
Right: selected favorable (green) and unfavorable (red) wealth paths and their associated exposures.}
\label{fig:complete_position}
\end{figure}

The policy for the binding case primarily compresses the lower tail of terminal wealth (Figure~\ref{fig:complete_wealth}).
The right panel also reports the unconditional diagnostic $\operatorname{CVaR}_\alpha(-W_t^\star)$ for $t<T$.
In this zero-rate, no-inflow calibration it remains below the numerical level $c$ throughout the horizon.
This observation is not a dynamic constraint and need not persist under other calibrations.

\begin{figure}[H]
\centering
\includegraphics[width=0.48\linewidth]{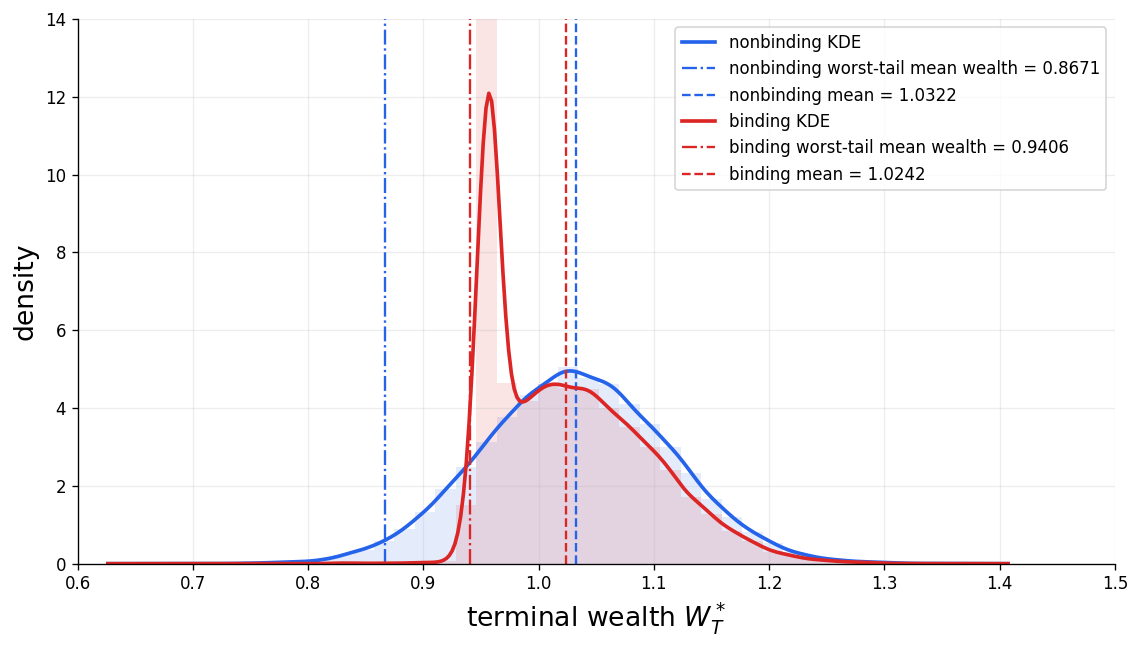}
\includegraphics[width=0.48\linewidth]{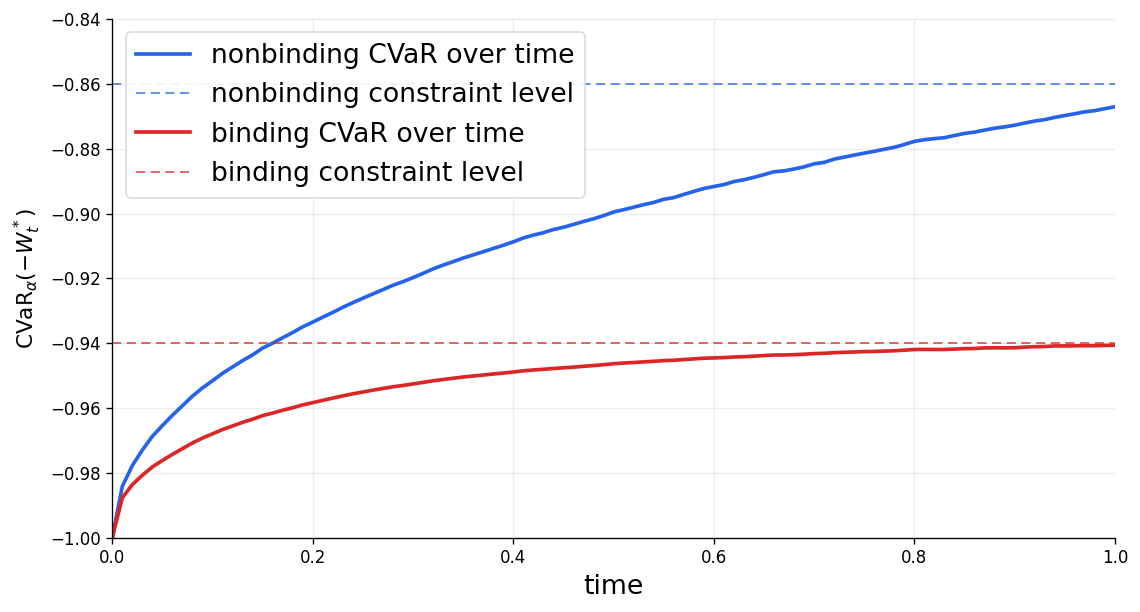}
\caption{Wealth and lower-tail risk in the complete market.
Left: terminal wealth distributions in the nonbinding and binding cases.
Right: $\operatorname{CVaR}_{\alpha}(-W_t^\star)$ as a diagnostic over time, with the terminal constraint levels.}
\label{fig:complete_wealth}
\end{figure}

This lower-tail reshaping is qualitatively related to complete-market mean-risk and quantile models~\citep{gao2017dynamic,he2015dynamic,he2011portfolio}.
Here, however, the quadratic variation penalty regularizes the solution: the binding constraint leads to smooth de-risking that depends on the state rather than to a digital or lottery-like terminal payoff.

\subsection{Incomplete Market}\label{ssec:incompete}
We set the nontraded risk exposure to be $\beta^\perp = 0.02$. 
In this incomplete market setting, the algorithm of Section~\ref{sec:algorithm} remains applicable.

In the nonbinding case, nontraded risk leaves the Merton exposure unchanged but moves terminal loss CVaR from $-0.8671$ to $-0.8620$.
In the binding case with $c=-0.9400$, the calibrated multiplier is $0.1400$ and average risky exposure falls to $0.2549$, which is $14.5\%$ below the complete-market value in the binding case and $36.3\%$ below the Merton exposure.
Expected terminal wealth falls by about $1.1\%$ relative to the policy for the nonbinding case.

\begin{table}[H]
\centering
\small
\begin{tabular}{lccccc}
\toprule
Case & $c$ & $\widehat\lambda$
& $\operatorname{CVaR}_\alpha(-W_T^\star)$
& $\E[W_T^\star]$ & average $\varphi_t^\star$ \\
\midrule
Nonbinding & $-0.86$ & $0.0000$ & $-0.8620$ & $1.0322$ & $0.4000$ \\
Binding & $-0.94$ & $0.1400$ & $-0.9405$ & $1.0206$ & $0.2549$ \\
\bottomrule
\end{tabular}
\caption{Summary statistics for the incomplete market in the nonbinding and binding cases of the terminal CVaR constraint.}
\label{tab:incomplete}
\end{table}

Figure~\ref{fig:incomplete_position} displays the resulting feedback response in the binding case.
Along the selected favorable path, wealth ends at approximately $1.237$ and exposure returns to the Merton level after $t=0.6$.
Along the selected unfavorable path, wealth ends near $0.956$ and average exposure after $t=0.6$ falls to approximately $0.043$.
The investor therefore offsets unhedgeable tail risk indirectly by reducing traded risk in unfavorable states.

\begin{figure}[H]
\centering
\includegraphics[width=0.48\linewidth]{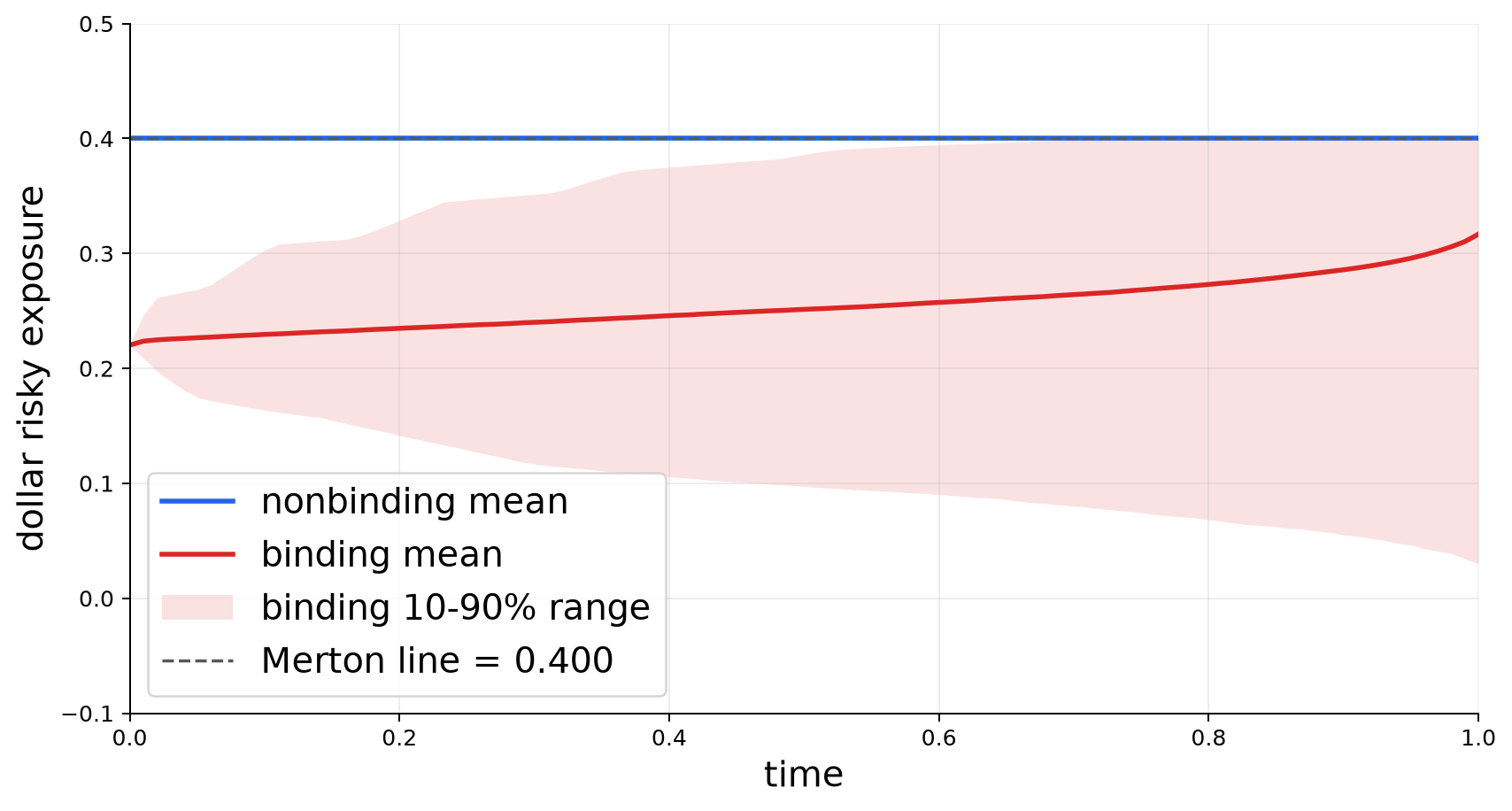}
\includegraphics[width=0.48\linewidth]{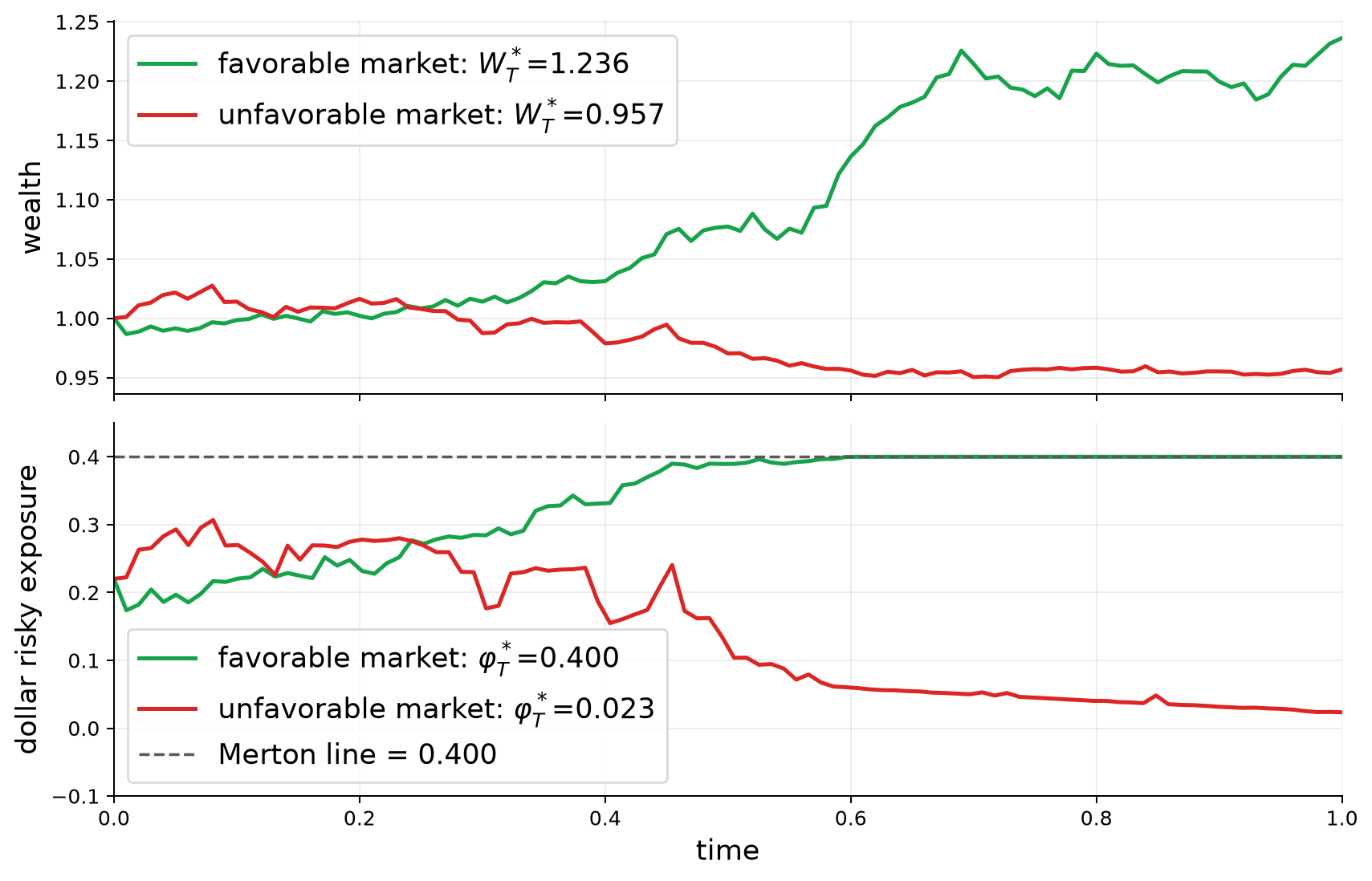}
\caption{Optimal dollar risky exposure in the incomplete market.
Left: mean exposure in the binding case and its 10--90\% simulation band, with the Merton exposure from the nonbinding case.
Right: selected favorable (green) and unfavorable (red) wealth paths and their associated exposures.}
\label{fig:incomplete_position}
\end{figure}

The distributional comparison in Figure~\ref{fig:incomplete_wealth} confirms that the policy for the binding case removes mass from severe low-wealth outcomes.
The right-panel trajectories are interim diagnostics only; the optimization imposes the CVaR restriction at $T$, not at intermediate dates.

\begin{figure}[H]
\centering
\includegraphics[width=0.48\linewidth]{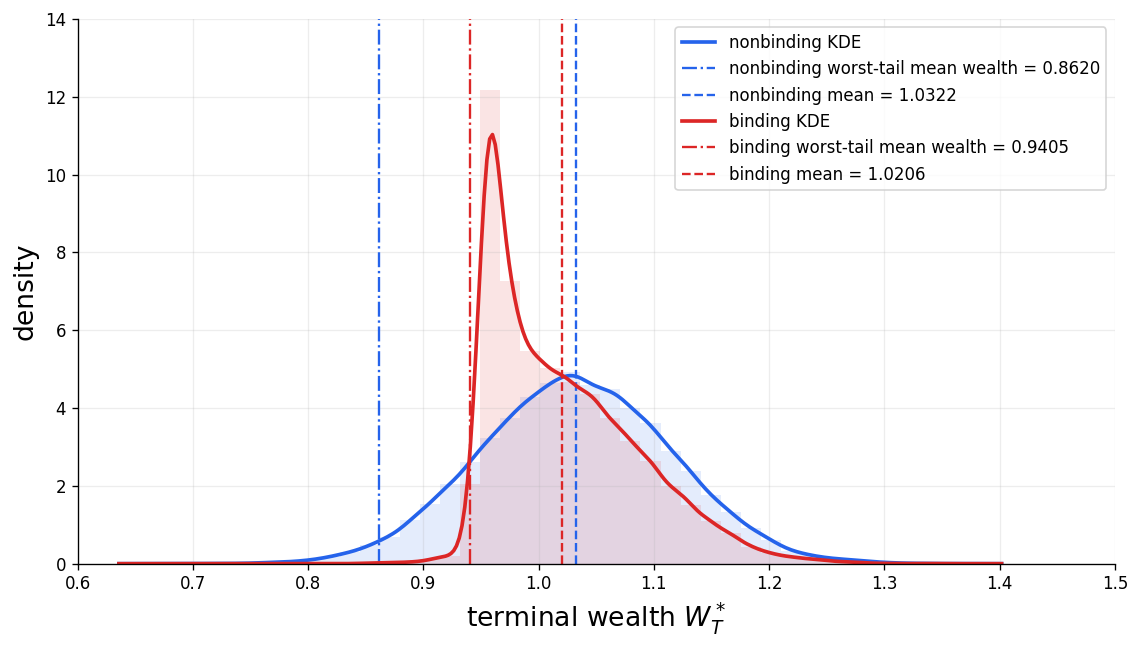}
\includegraphics[width=0.48\linewidth]{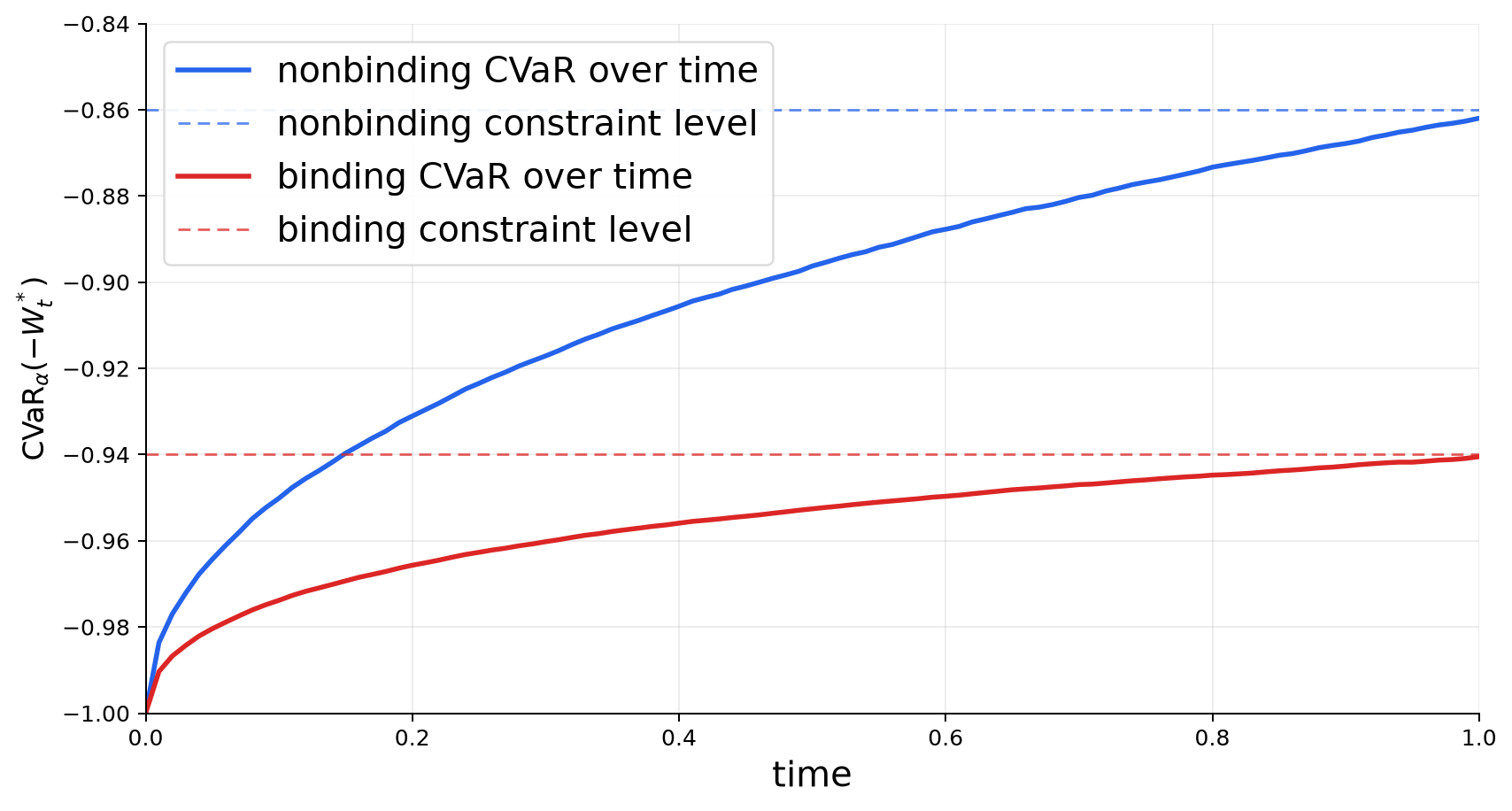}
\caption{Wealth and lower-tail risk in the incomplete market.
Left: terminal wealth distributions in the nonbinding and binding cases.
Right: $\operatorname{CVaR}_{\alpha}(-W_t^\star)$ as a diagnostic over time, with the terminal constraint levels.}
\label{fig:incomplete_wealth}
\end{figure}

\section{Portfolio Adjustment and Price Impact}\label{sec:extension}

With the presence of trading regularization or existence of price impact, the dollar risky amount becomes a state variable. 
We keep $\varphi_t$ to represent the dollar risky amount, and the control $\dot\varphi$ is referred to as the signed dollar trading rate, where the dynamic between the dollar risky amount $\varphi$ and the control $\dot\varphi$ remains linear 
\begin{align}\label{dyn:linear_control}
d\varphi_t 
&= \dot\phi_t S_t dt + \phi_t dS_t \notag\\
&= \dot\varphi_t dt +\varphi_t \left(\mu dt+\sigma dB_t \right), \qquad \varphi_0=0,
\end{align}
where we recall that $\phi_t$ stands for the total shares hold by the investor, and $\dot\phi_t$ represents her signed trading rate in shares. 
Although the results in Sections \ref{sec:setup}--\ref{sec:algorithm} are stated for a scalar wealth
state and direct portfolio controls, the same arguments extend
directly to finite-dimensional affine controlled state dynamics under
the corresponding convexity, compactness, and integrability
conditions. We use this extension in Section~\ref{ssec:linear impact}. 

The control $\dot\phi$ is used in both specifications below, but the economic interpretations and theoretical properties of the two
specifications differ. In Section~\ref{ssec:linear impact}, the quadratic term is an
objective regularizer that smooths portfolio adjustment; it is not an
execution cost and is therefore not deducted from wealth. The
augmented state dynamics remain affine in the trading-rate control, and
the quadratic regularizer preserves the convex structure underlying
the analysis in Sections~\ref{sec:reformulation} and~\ref{sec:algorithm}. In Section~\ref{ssec:nonlinear impact}, by contrast, price
impact changes the execution price and the resulting cost is deducted
directly from wealth. The resulting wealth dynamics depend nonlinearly
on the trading-rate control through $|\dot\phi_t|^{3/2}$ and therefore
fall outside the affine-state setting of Sections~\ref{sec:reformulation} and~\ref{sec:algorithm}. We use the
square-root specification as a numerical robustness experiment beyond
the setting covered by our convergence theory.

\subsection{Quadratic Trading Rate Regularization}\label{ssec:linear impact}

We first add a quadratic penalty on the signed dollar trading rate to smooth portfolio adjustment:
\begin{align}\label{target:linear_problem_original}
J(\dot\varphi)
=
\E\left[\int_0^T \left (\frac{\gamma}{2}\sigma^2\varphi_t^2+\frac{\Lambda}{2}\dot\varphi_t^2 -\mu\varphi_t\right) dt\right].
\end{align}
Since the regularization only shows up in the target functional, the wealth dynamics remain the same as~\eqref{dyn:lq_wealth}. 
The CVaR-constrained control problem is
\begin{align}\label{target:linear_problem}
\inf_{\dot\varphi\in\A}\quad
&J(\dot\varphi)\notag\\
\text{subject to}\quad
&\operatorname{CVaR}_{\alpha}(-W_T)\leq c.
\end{align}
The term $\Lambda\dot\varphi_t^2/2$ is a regularization term in the performance criterion, which penalizes abrupt changes in the risky position and produces smoother trading policies. We use the calibration from Table~\ref{tab:numerical_common_parameters} and set $\Lambda=0.01$; larger values of $\Lambda$ place greater weight on smooth portfolio adjustment.

Table~\ref{tab:linear} shows that the CVaR constraint is nonbinding at $c=-0.86$.
In the binding case, average exposure falls from $0.3104$ to $0.2415$ and $\widehat\lambda$ rises to $0.06001$.
Expected terminal wealth declines from $1.0247$ to $1.0192$, while terminal loss CVaR reaches $-0.9413$.
The policy for the binding case uses the calibrated pair $(\lambda,\eta)=(0.06001,-0.947499)$.

\begin{table}[H]
\centering
\small
\begin{tabular}{lccccc}
\toprule
Case & $c$ & $\widehat\lambda$
& $\operatorname{CVaR}_\alpha(-W_T^\star)$
& $\E[W_T^\star]$ & average $\varphi_t^\star$ \\
\midrule
Nonbinding & $-0.86$ & $0.0000$ & $-0.8971$ & $1.0247$ & $0.3104$ \\
Binding & $-0.94$ & $0.06001$ & $-0.9413$ & $1.0192$ & $0.2415$ \\
\bottomrule
\end{tabular}
\caption{Summary statistics with quadratic trading-rate regularization in the nonbinding and binding cases of the terminal CVaR constraint.}
\label{tab:linear}
\end{table}

Figure~\ref{fig:linear control} shows that both policies trade most rapidly near the initial date and that their mean trading rates approach zero near maturity.
The policy for the binding case trades less aggressively and maintains a lower exposure profile.
The exposure bands remain nondegenerate in the nonbinding case because $\dot\varphi$ contains the stock-price diffusion in \eqref{dyn:linear_control}.

\begin{figure}[H]
\centering
\includegraphics[width=0.48\linewidth]{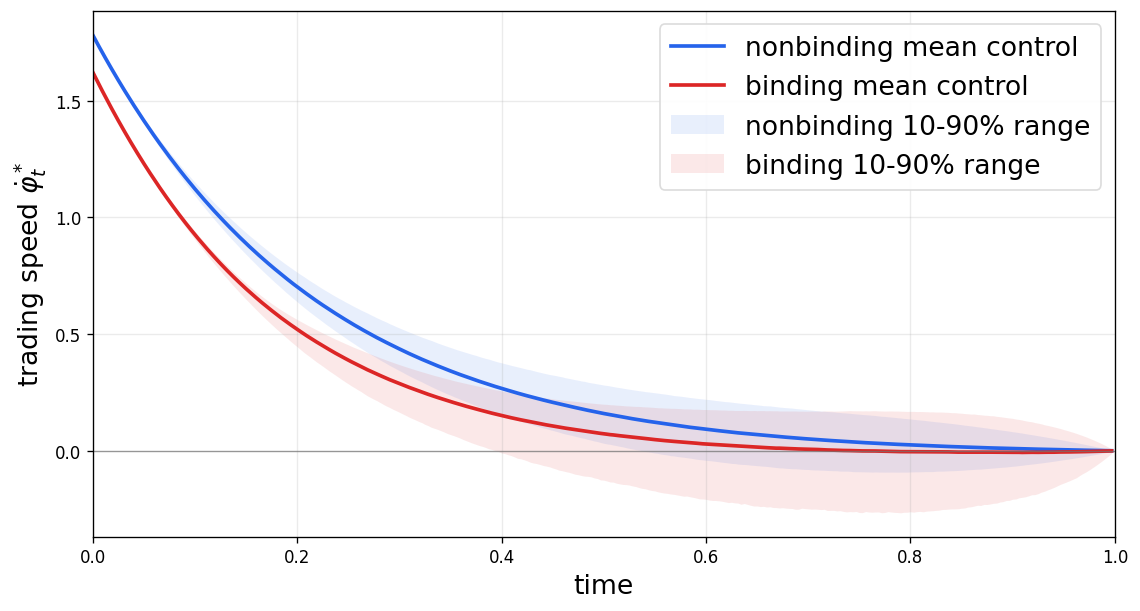}
\includegraphics[width=0.48\linewidth]{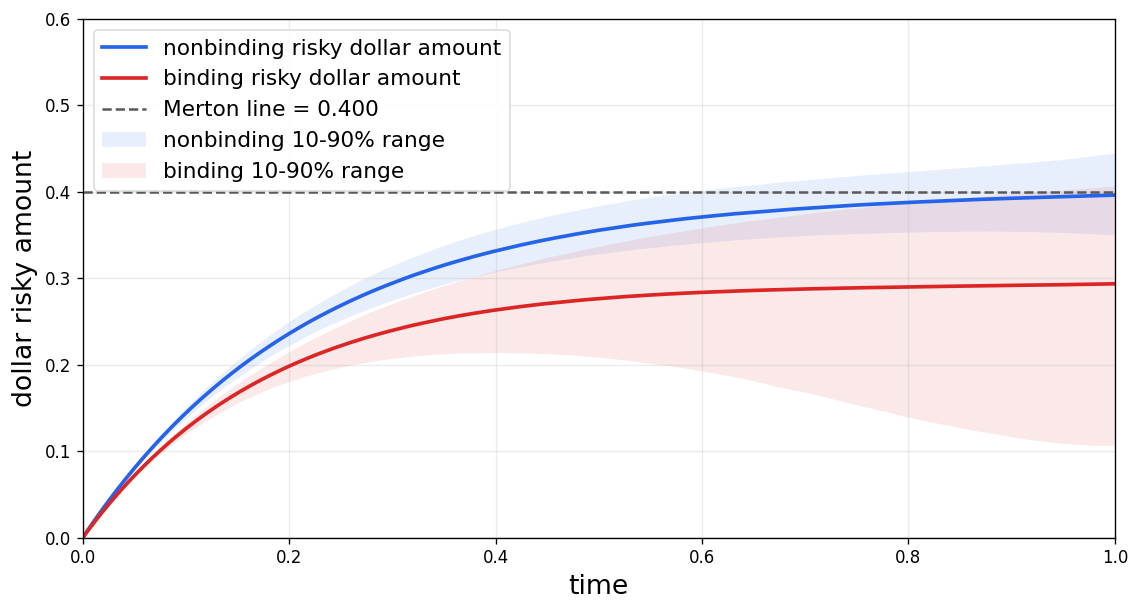}
\caption{Signed dollar trading rate and dollar risky exposure under the quadratically regularized specification.
Left: mean trading rates and 10--90\% simulation bands.
Right: associated exposure profiles and bands.}
\label{fig:linear control}
\end{figure}

In the binding case, terminal wealth is more concentrated and has less downside mass (Figure~\ref{fig:linear wealth}).
The right panel reports $\operatorname{CVaR}_\alpha(-W_t)$ as an interim diagnostic.
The constraint is imposed at maturity.

\begin{figure}[H]
\centering
\includegraphics[width=0.48\linewidth]{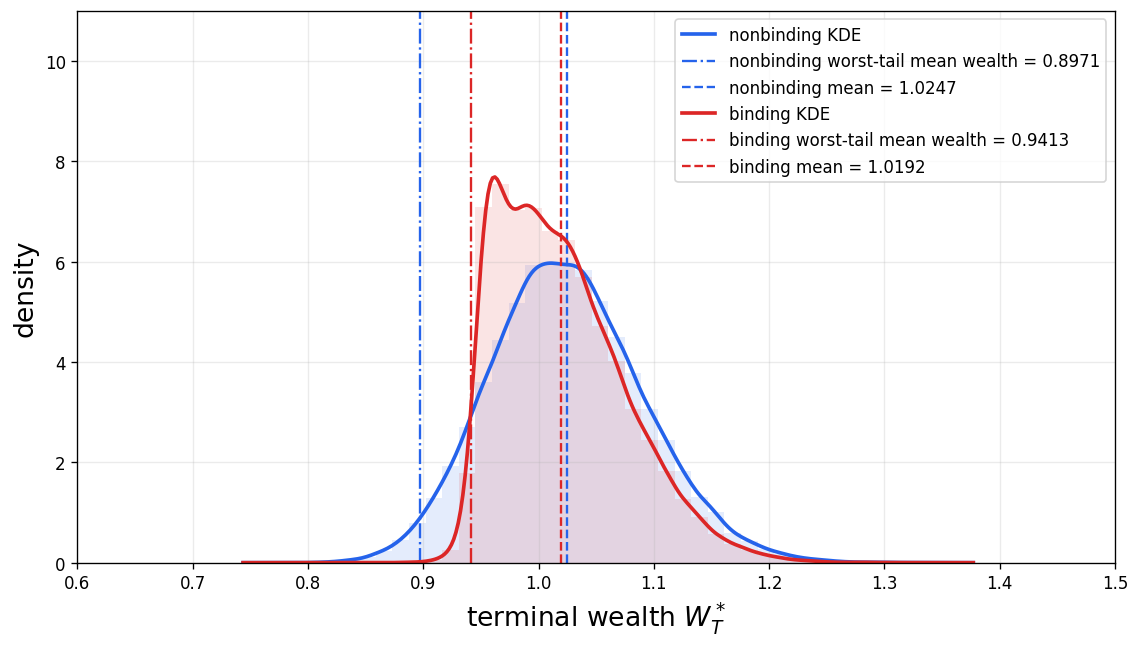}
\includegraphics[width=0.48\linewidth]{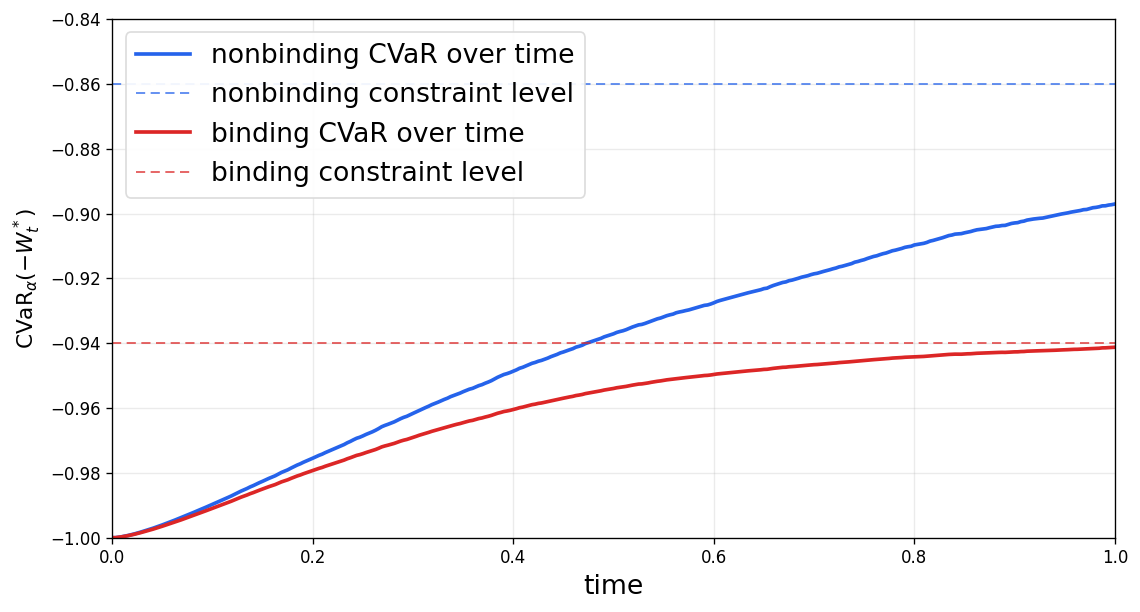}
\caption{Wealth and lower-tail risk under quadratic trading-rate regularization.
Left: terminal-wealth distributions.
Right: out-of-sample $\operatorname{CVaR}_\alpha(-W_t)$ diagnostics and terminal constraint levels.}
\label{fig:linear wealth}
\end{figure}

The selected paths in Figure~\ref{fig:linear sample} illustrate the feedback mechanism under the binding constraint.
The favorable path ends at wealth $1.085$ and approaches the Merton exposure, whereas the unfavorable path ends near $0.942$ and reduces exposure to approximately $0.064$ late in the horizon.
The trading rates along both paths decrease toward zero as maturity approaches. 
In the nonbinding case, the trading rate converges to zero as maturity approaches because further repositioning offers no remaining expected-return benefit while still incurring the quadratic trading-rate penalty.
In the binding case, the last reported preterminal rate remains slightly nonzero because the terminal-loss term associated with the CVaR constraint creates an additional incentive to adjust exposure over the final decision interval.

\begin{figure}[H]
\centering
\includegraphics[width=0.8\linewidth]{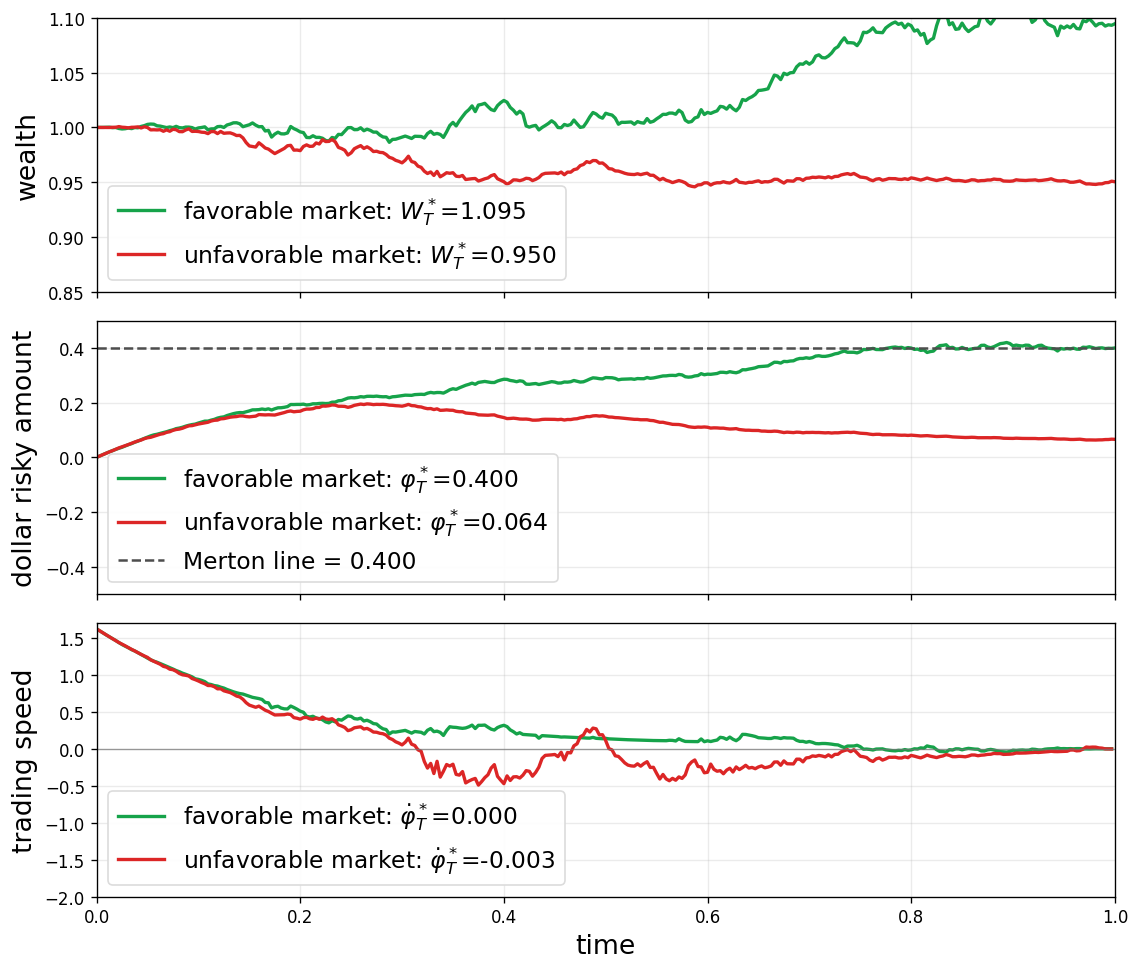}
\caption{Wealth, dollar risky exposure, and signed dollar trading rate along selected favorable (green) and unfavorable (red) realizations in the binding case.}
\label{fig:linear sample}
\end{figure}

\subsection{Square-Root Price Impact}\label{ssec:nonlinear impact}
We retain the same market and initial position, but now let execution price respond to signed dollar trading rate according to the square-root specification. 
When executing a stock, the execution price $S^{\text{exe}}$ is usually different from the market price $S_t$, i.e.
\begin{align}\label{dyn:square-root price impact}
S_t^{\mathrm{exe}}
=S_t\left[
1+\frac{\Lambda_{3/2}}{3/2}
\operatorname{sign}(\dot\varphi_t)\sqrt{|\dot\varphi_t|}
\right].
\end{align}
The placement of the brackets in \eqref{dyn:square-root price impact} ensures that $S_t^{\mathrm{exe}}=S_t$ when $\dot\varphi_t=0$.
Since $\dot\varphi_t=\dot\phi_t S_t$, execution costs reduce wealth as follows:
\begin{align}
dW_t
&=\phi_t\,dS_t-(S_t^{\mathrm{exe}}-S_t)\dot\phi_t\,dt\notag\\
&=\varphi_t(\mu\,dt+\sigma\,dB_t)
-\frac{\Lambda_{3/2}}{3/2}|\dot\varphi_t|^{3/2}\,dt.
\end{align}
The corresponding portfolio management problem is 
\begin{align}\label{target:power_problem_original}
\inf_{\dot\varphi\in\A}\quad
&\E\left[\int_0^T
\left(
\frac{\gamma}{2}\sigma^2\varphi_t^2
+\frac{\Lambda_{3/2}}{3/2}|\dot\varphi_t|^{3/2}
-\mu\varphi_t
\right)dt\right]\notag\\
\text{subject to}\quad
&\operatorname{CVaR}_{\alpha}(-W_T)\leq c.
\end{align}
Although the cost $|\dot\varphi|^{3/2}$ is convex, its nonlinear
appearance in the wealth dynamics places this specification outside
the affine-state setting of Sections~\ref{sec:setup}--\ref{sec:algorithm}. The convergence guarantees
established above therefore do not apply directly.
We set $\Lambda_{3/2}=0.0045$ and use the remaining parameters from Table~\ref{tab:numerical_common_parameters}.

In the nonbinding case, terminal loss CVaR is $-0.8839$ and the multiplier is zero.
In the binding case, the policy reduces average exposure from $0.3500$ to $0.2422$ and mean terminal wealth from $1.0260$ to $1.0180$, as shown in Table~\ref{tab:power}.

\begin{table}[H]
\centering
\small
\begin{tabular}{lccccc}
\toprule
Case & $c$ & $\widehat\lambda$
& $\operatorname{CVaR}_\alpha(-W_T^\star)$
& $\E[W_T^\star]$ & average $\varphi_t^\star$ \\
\midrule
Nonbinding & $-0.86$ & $0.0000$ & $-0.8839$ & $1.0260$ & $0.3500$ \\
Binding & $-0.94$ & $0.07818$ & $-0.9409$ & $1.0180$ & $0.2422$ \\
\bottomrule
\end{tabular}
\caption{Summary statistics for square-root price impact in the nonbinding and binding cases of the terminal CVaR constraint.}
\label{tab:power}
\end{table}

Figure~\ref{fig:power_control} shows faster initial accumulation under the policy for the nonbinding case.
Under the policy for the binding case, later trading rates can become negative in unfavorable states, indicating partial liquidation, while mean exposure stabilizes near $0.24$.
Diffusion in \eqref{dyn:linear_control} generates widening exposure bands under both policies.

\begin{figure}[H]
\centering
\includegraphics[width=0.48\linewidth]{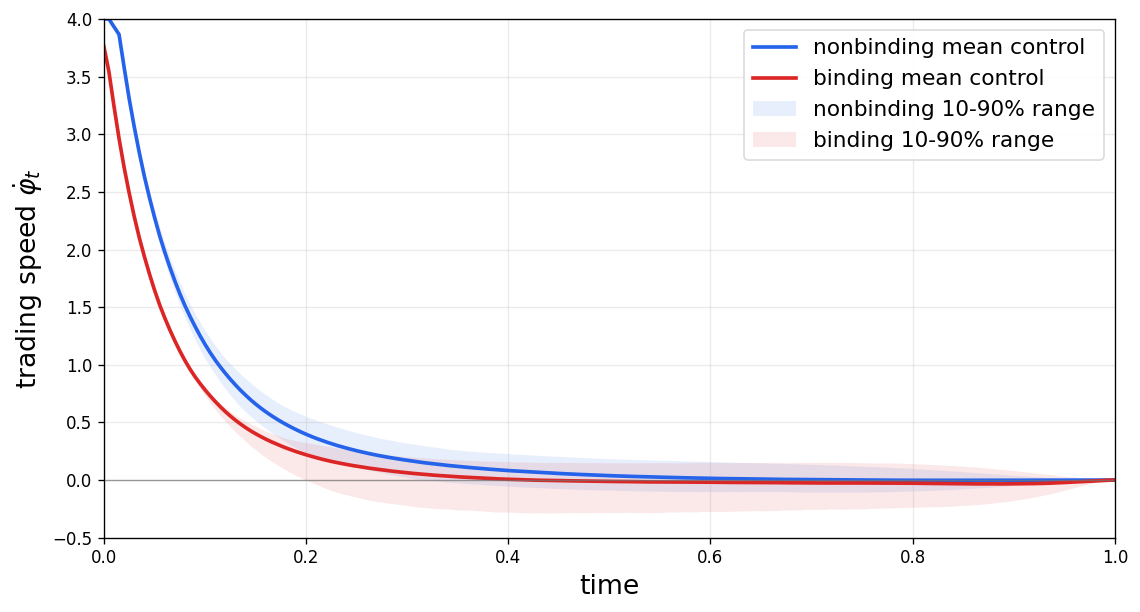}
\includegraphics[width=0.48\linewidth]{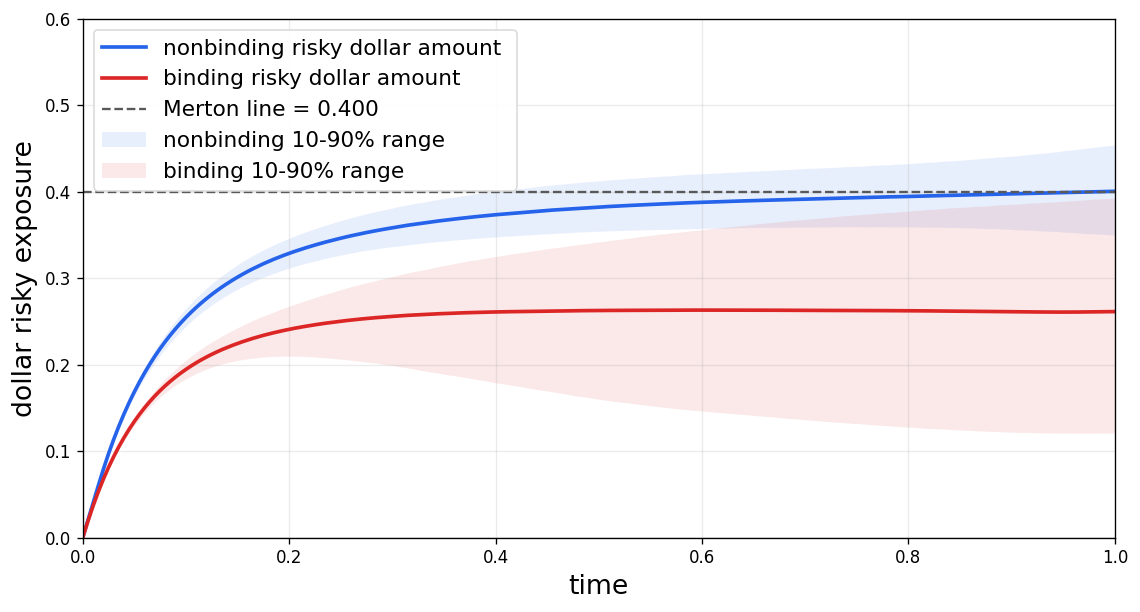}
\caption{Trading and exposure under square-root price impact.
Left: mean signed dollar trading rates and 10--90\% simulation bands.
Right: associated dollar risky exposures and bands, with the Merton reference.}
\label{fig:power_control}
\end{figure}

The policy for the binding case compresses the terminal-wealth distribution and reduces extreme outcomes (Figure~\ref{fig:power_state}).
Out-of-sample terminal loss CVaR is $-0.9409$, below the required level $-0.9400$.

\begin{figure}[H]
\centering
\includegraphics[width=0.48\linewidth]{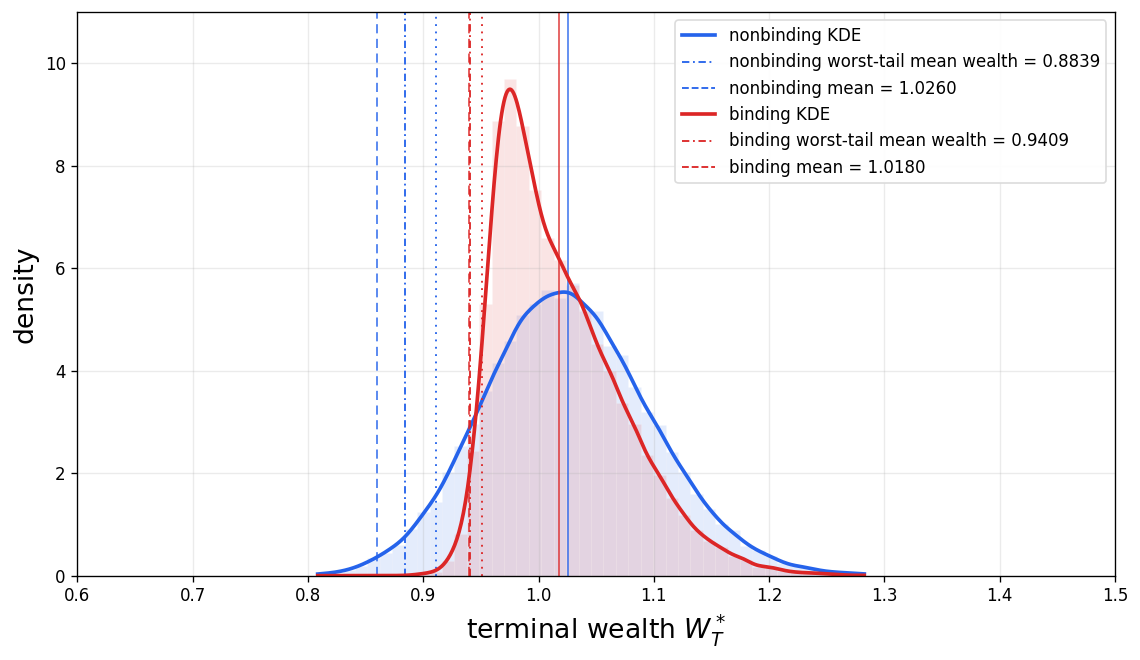}
\includegraphics[width=0.48\linewidth]{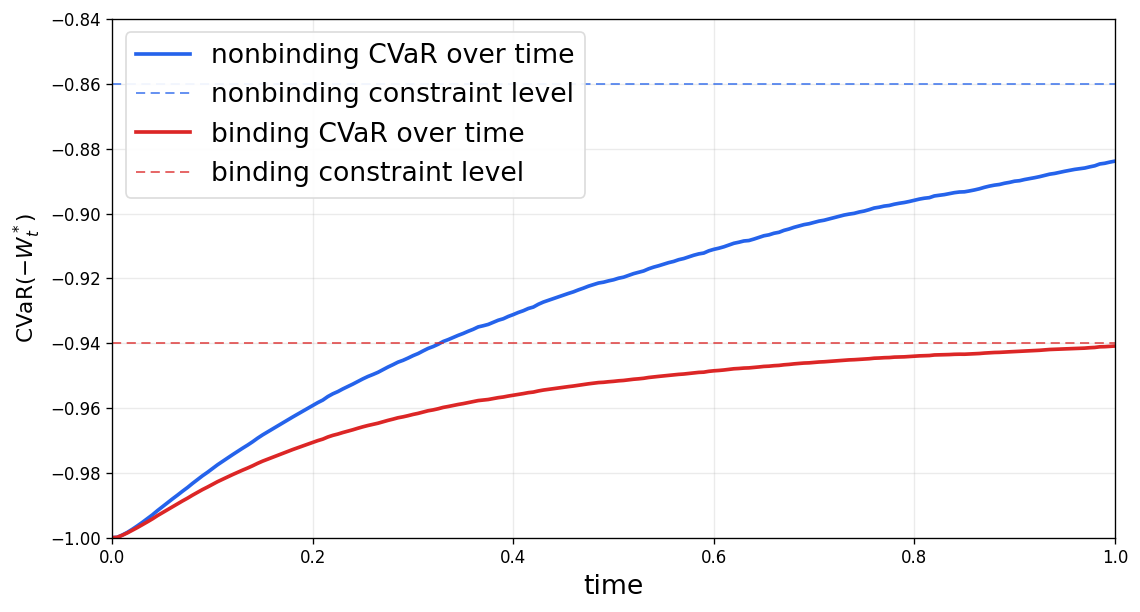}
\caption{Wealth and tail risk under $3/2$-power trading costs.
Left: terminal wealth distributions.
Right: out-of-sample loss-CVaR diagnostics and terminal constraint levels.}
\label{fig:power_state}
\end{figure}

In Figure~\ref{fig:power_sample}, the selected favorable path under the binding constraint ends at wealth $1.112$ and returns toward the Merton exposure.
The selected unfavorable path ends near $0.947$ and reduces late-horizon exposure to approximately $0.097$; its trading rate is negative over several intervals.
Trading rates approach zero near maturity in both paths.

\begin{figure}[H]
\centering
\includegraphics[width=0.8\linewidth]{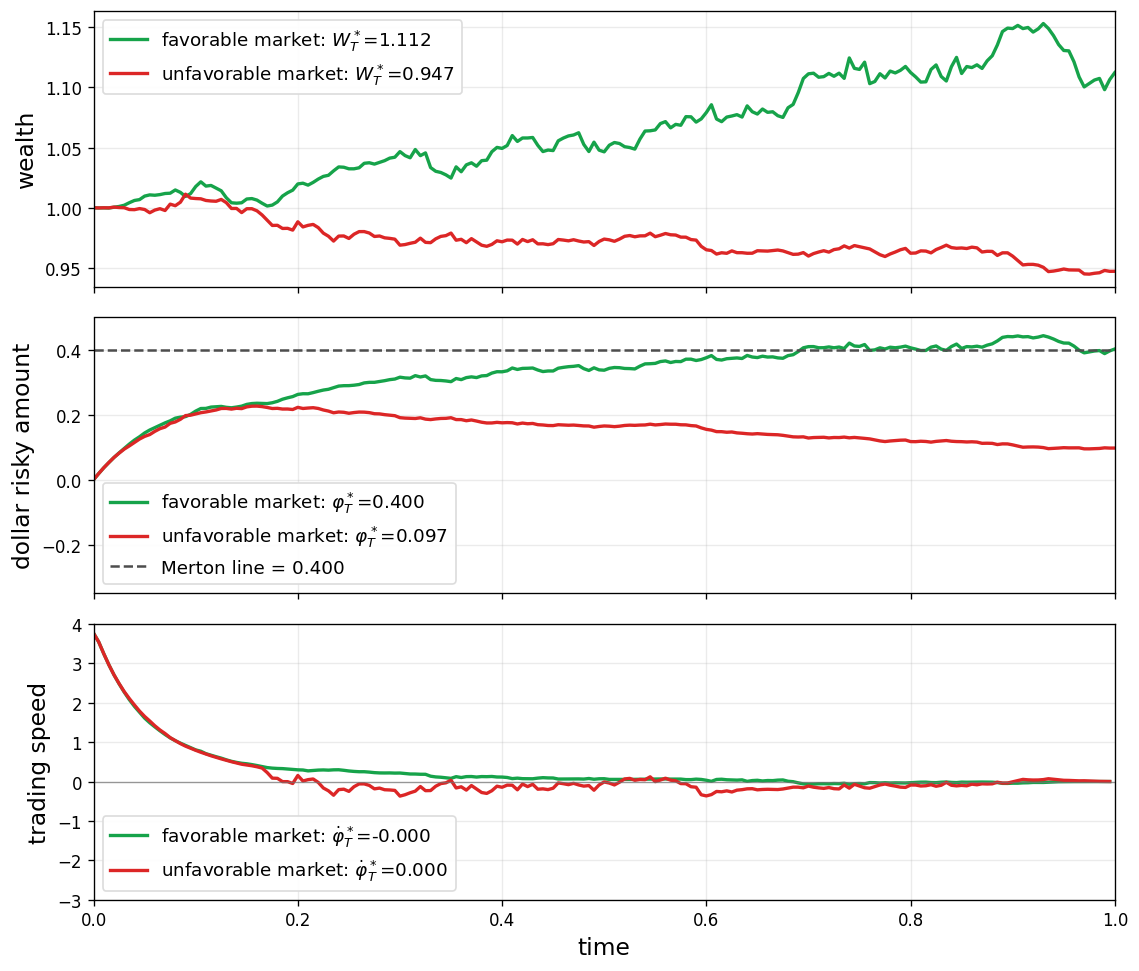}
\caption{Wealth, dollar risky exposure, and signed dollar trading rate along selected favorable (green) and unfavorable (red) realizations in the binding case.}
\label{fig:power_sample}
\end{figure}

The $3/2$-power specification produces a mean exposure in the binding case close to that under quadratic regularization but a different adjustment pattern: it permits faster initial accumulation and uses state-dependent sales when downside risk increases.
Its held-out feasibility provides numerical evidence that the method remains effective in this nonlinear price-impact specification, although the convergence guarantees do not apply
directly.

\section{Conclusion}\label{sec:conclusion}
This paper studies a dynamic portfolio management problem under an explicit CVaR constraint on terminal loss. Using the auxiliary-threshold representation of CVaR, we obtain a convex formulation that accommodates general market settings, including incomplete market, nontraded endowment risks, and general convex trading objectives. We establish existence and strong duality and develop a modular numerical method that combines standard unconstrained stochastic-control problems with one-dimensional searches over the CVaR threshold and Lagrange multiplier, which is shown to converge globally.

The numerical experiments show that a binding terminal CVaR constraint does not induce uniform de-risking. Instead, the optimal policy reduces risky exposure following adverse outcomes while preserving participation in favorable states. Nontraded endowment risk strengthens this adjustment when the constraint binds, whereas trading frictions lower the desired exposure and slow portfolio adjustment. Across the environments considered, stronger lower-tail protection is achieved at a comparatively modest cost in expected terminal wealth. These findings illustrate how a terminal risk constraint can shape state-dependent portfolio decisions throughout the investment horizon.

\section*{Acknowledgments}
SP and XS acknowledge financial support from the Natural Sciences and Engineering Research Council of Canada (RGPIN-2025-05847, RGPIN-2024-04569). AH thanks the Fields Institute for the FOCUS visitor support program and acknowledges financial support by InnoHK initiative, The Government of the HKSAR, and Laboratory for AI-Powered Financial Technologies.

\bibliographystyle{abbrv}
\bibliography{reference.bib}
\end{document}